\newtheorem{thm}{Theorem}[section]
\newtheorem{lem}[thm]{Lemma}
\newtheorem{prop}[thm]{Proposition}
\theoremstyle{definition}
\newtheorem{defn}[thm]{Definition}
\theoremstyle{remark}
\newtheorem{rem}[thm]{Remark}
\numberwithin{equation}{section}
\newcommand{\abs}[1]{\left\vert#1\right\vert}
\newcommand{\babs}[1]{\Big \vert#1 \Big \vert}
\newcommand{\set}[1]{\left\{#1\right\}}
\newcommand{\parr}[1]{\left (#1\right )}
\newcommand{\eps}{\varepsilon}
\newcommand{\too}{\rightarrow}
\newcommand{\OO}{\mathcal{O}}
\newcommand{\bbar}[1]{\overline{#1}}
\newcommand{\wt}[1]{\widetilde{#1}} %wide tilde
\newcommand{\wh}[1]{\widehat{#1}} %wide hat
\def \M{\mathcal{M}}
\def \N{\mathcal{N}}
\def \z{{z_0}} %for the argument of d(z_0,w_0)
\def \w{{w_0}}
\def \bfi{\textbf{\footnotesize{i}}} % traditional \i , which is a letter i without a dot
\def \C{\mathbb{C}} %complex plane
\def \D{\mathcal{D}} %unit disc
\def \Md{M_{\mathcal{D}}} %the disc Mobius trans
\def \vol{\mbox{\rm{\footnotesize{vol}}}} %surface volume element
\def \Vol{\mbox{\rm{vol}}} %surface volume element
\def \bg{\mbox{\boldmath{$g$}}}%for the metric tensors g_i%h_ijj
\def \bh{\mbox{\boldmath{$h$}}}
\def \R{\mathbb{R}}
\def \ddelta{\mbox{\boldmath{$\delta$}}}
\def \Mbs{M\"{o}bius }
\def \d{\mbox{\bf{d}}}
\newcommand{\ing}[1]{{\color{black}#1}}     % this is just so that the changes
\newcommand{\ingg}[1]{{\color{black}#1}}     % this is just so that the changes
\definecolor{darkred}{rgb}{0.6,0,0}
\begin{document}
\title[{\ingg{Conformal Wasserstein Distances: }}Comparing Surfaces in Polynomial Time]{{\ingg{Conformal Wasserstein Distances:}} Comparing Surfaces in Polynomial Time}
\author{Y. Lipman, I. Daubechies }
\address{Princeton University}
%\email{}%
%
%\thanks{}%
%\subjclass{}%
%\keywords{}%
%
%%\date{}%
%%\dedicatory{}%
%%\commby{}%
%% ----------------------------------------------------------------
\begin{abstract}
We present a constructive approach to surface comparison realizable
by a polynomial-time algorithm. We determine the ``similarity''  of
two given surfaces by solving a mass-transportation problem between
their conformal densities. This mass transportation problem differs
from the standard case in that we require the solution to be
invariant under global M\"{o}bius transformations. {\ingg{We present
in detail the case where the surfaces to compare are disk-like; we
also sketch how the approach can be generalized to other types of surfaces.}}
\end{abstract}
\maketitle

% ----------------------------------------------------------------
\section{introduction}
%Motivation
Alignment and comparison of surfaces (2-manifolds) play a central
role in a wide range of scientific disciplines; they often constitute a
crucial step in a variety of problems in medicine and biology.

Mathematically, the algorithmic problem of surface alignment amounts
to defining a metric function $\d(\cdot,\cdot)$ in the space of
Riemannian 2-manifolds \ing{with the following two properties: 1)} for any two surfaces $\M$ and $\N$,
 $\d(\M,\N)=0$ implies that $\M$ \ing{and} $\N$ \ing{are} isometric, and 2) \ing{given a reasonably large number of reasonably well-distributed sample points on both surfaces, an accurate approximation to} the distance
$\d(\M,\N)$ can be calculated in a \ing{time that grows only} polynomial\ing{ly}
in the sample set size. This second requirement is
crucial to ensure that the algorithm can be used effectively in
applications.

%challange, motivation
A prominent mathematical approach to define distances between
surfaces that has been proposed for practical applications
\cite{memoli05,BBK06} is the {\em Gromov-Hausdorff (GH) distance};
it considers the surfaces as special cases of \emph{metric spaces}.
T\ing{o determine the} GH distance between \ing{the} metric spaces
$X$ and $Y$\ing{, one} examin\ing{es} all the isometric embeddings
of $X$ and $Y$ into (other) metric spaces; although this distance
possesses many attractive mathematical properties, it is inherently
hard computationally. For instance, computing the $L_p$ version of
the GH distance \ing{between two surfaces} is equivalent to a
non-convex quadratic programming problem, generally over the
integers \cite{memoli07}. This problem is equivalent to integer
quadratic assignment, and is thus NP-hard \cite{Cela98}. In
\cite{memoli07}, Memoli generalizes the GH distance of
\cite{memoli05} by introducing a quadratic mass transportation
scheme to be applied to metric spaces that are also equipped with a
measure (mm spaces); the computation of this {\em Gromov-Wasserstein
(GW) distance for mm spaces} is somewhat easier and more stable to
implement than the original GH distance. The computation of the GW
distance between two surfaces described in \cite{memoli07} utilizes
a (continuous rather than integer) quadratic programming method;
the functional to be minimized is generally not convex and
optimization methods are likely to find local minima rather than the
global minima that realizes the surfaces' distance.

In this paper we propose a new surface alignment procedure,
introducing the {\em conformal Wasserstein distance}.   Our
construction consists in ``geometrically'' aligning the surfaces,
based on uniformization theory and optimal mass transportation. The
uniformization theory serves as a ``dimensionality reduction'' tool,
representing e.g. \ing{a} disk-type surface by its conformal factor
on the unit disk: the corresponding automorphism group (the
disk-preserving M\"{o}bius group) has only three degrees of freedom
and is therefore searchable in polynomial time. Next, the
Kantorovich mass-transportation \cite{Kantorovich1942} is used to
construct a linear functional the minimizer of which furnishes a
metric; as is well-known \cite{Rubner2000-TEM, Villani:2003}, this
can solved by a linear program, and can thus be
computed/approximated in polynomial time as-well.

As far as we know, prior to our work, no polynomial time algorithm
was known to compute, either exactly or up to a good approximation,
the GH distance or any other proposed intrinsic geometric
distances between surfaces. Although  \cite{memoli05} uses a mass
transportation as well (albeit quadratic mass transportation), our
approach is nevertheless different.
We solve the ``standard'' (and thus \emph{linear}) Kantorovich
mass transportation problem, which is convex (even linear) and
solvable via a linear programming method.

There exist earlier papers on aligning or comparing surfaces that
use uniformization. In particular, the papers by Zeng et al.
\cite{Gu2008_a,Gu2008_b} which build upon the work of Gu and Yau
\cite{Gu08}, also use uniformization for surface alignment (albeit
without defining a distance between surfaces). However, they use
prescribed feature points (defined either by the user or by extra
texture information) to calculate an interpolating harmonic map
between the uniformization spaces, and then define the final
correspondence as a composition of the uniformization maps and this
harmonic interpolant. We use only intrinsic geometric information:
we make use of the surfaces' metric (inherited from its embedding
in $\R^3$) and the induced conformal
structure to define deviation from (local) isometry.

Optimal mass transportation has also been used before in
aligning or comparing images. Following the seminal work by Rubner et al.~\cite{Rubner2000-TEM},
it is used extensively in the engineering
literature to define interesting metric distances for images, \ing{interpreted}
as probability densities; in this context the metric is often called
the ''Earth Mover's Distance''.

%arrangement of article
Our paper is organized as follows: in Section \ref{s:prelim} we
briefly recall some facts about uniformization and optimal mass
transportation that we shall use, at the same time introducing our
notation. Section \ref{s:optimal_vol_trans_for_surfaces} contains
the main results of this paper, constructing the conformal Wasserstein distance metric
between disk-type surfaces, in several steps; we also indicate
how the approach can be generalized to other surfaces. Section
\ref{s:the_discrete_case_implementation} briefly describes the
discrete case.

% ----------------------------------------------------------------
\section{Background and Notations}
\label{s:prelim}

As described in the introduction, our framework makes use of two
mathematical theories: uniformization theory, to represent the
surfaces as measures defined on a canonical domain, and optimal mass
transportation, to align the measures. In this section we recall
some of their basic properties, and we introduce our notations.

% ----------------------------------------------------------------
\subsection{Uniformization}
\label{subs:unif} By the celebrated uniformization theory for
Riemann surfaces (see for example \cite{Springer57,Farkas92}), any
simply-connected Riemann surface is conformally equivalent to one of
three canonical domains: the sphere, the complex plane, or the unit
disk. Since every 2-manifold surface $\M$ equipped with a smooth
Riemannian metric $g$ has an induced conformal structure and is thus
a Riemann surface, uniformization applies to such surfaces.
Therefore, every simply-connected surface with a Riemannian metric
can be mapped conformally to one of the three canonical domains
listed above. {\ingg{In this paper, we discuss 2D surfaces, equipped
with a Riemannian metric tensor $g$ (possibly inherited from the
standard 3D metric if the surface is embedded in $\R^3$)}} that have
a finite total volume (i.e. area, since we are delaing with surfaces). 
For convenience, we shall normalize the metric so that the surface area
equals 1.
We shall discuss in detail the case where
the surfaces $\M$ are topologically equivalent to disks. {\ingg{(We
shall address in side remarks how the approach can be
extended to the other cases.)}} For each such $\M$  there exists a
conformal map $\phi:\M \rightarrow \D$, where $\D =\{z \ ; \
|z|<1\}$ is the open unit disk. (we assume that $\M$ does not
include its boundary, if it has one). The map $\phi$ pushes $g$ to a metric on
$\D$; denoting the coordinates in $\D$ by $z=x^1+\bfi x^2$, we can
write this metric as
$$
\wt{g} = \phi_* g = \widetilde{\mu}(z)\, \delta_{ij}\, dx^i \otimes
dx^j,
$$
where $\widetilde{\mu}(z)>0$, Einstein summation convention is used,
and the subscript $*$ denotes the ``push-forward'' action. The
function $\widetilde{\mu}$ can also be viewed as the \emph{density
function} of the measure $\Vol_\M$ induced by the Riemann volume
element: indeed, for (measurable) $A \subset \M$,
\begin{equation}\label{e:volume_element}
    \Vol_\M(A) = \int_{\phi(A)} \widetilde{\mu}(z) \, dx^1\wedge dx^2.
\end{equation}

It will be convenient to use the hyperbolic metric
$(1-|z|^2)^{-2}\delta_{ij} dx^i \otimes dx^j$ as the reference
metric on the unit disk, rather than the standard Euclidean
$\delta_{ij} dx^i \otimes dx^j$; note that the two are conformally
equivalent (with conformal factor $(1-|z|^2)^{-2}$). Instead of the
density $\widetilde{\mu}(z)$, we shall therefore use the {\em
hyperbolic density function}
\begin{equation}\label{e:relation_hyperbolic_euclidean_density}
\mu^H(z):=(1-|z|^2)^{2}\,\widetilde{\mu}(z)\,,
\end{equation}
where the superscript $H$ stands for hyperbolic. We shall often drop
this superscript: unless otherwise stated $\mu=\mu^H$, and
$\nu=\nu^H$. This density function $\mu$
satisfies
$$\Vol_\M(A) = \int_{\phi(A)} \mu(z)\, d\vol_H(z)\,,$$
where $d\vol_H(z)=(1-|z|^2)^{-2}\, dx^1\wedge dx^2$.
{\ingg{In what follows we shall use the symbol $\mu$ both for the
function $\mu^H$ and as a shorthand for the absolutely continuous
measure $\Vol_\M$, and by extension for the surface $\M$ itself.}}

The conformal mappings of $\D$ to itself are the disk-preserving
M\"{o}bius transformations $m \in \Md$, a family with three real
parameters, defined by
\begin{equation}\label{e:disk_mobius}
    m(z) = e^{\bfi \theta}\frac{z-a}{1-\bar{a}z}, \ a\in \D, \ \theta \in [0,2\pi).
\end{equation}
Since these M\"{o}bius transformations satisfy
\begin{equation}\label{e:disk_mobius_is_isometry_of_hyperbolic_geom}
    (1-|m(z)|^2)^{-2}|m'(z)|^2 = (1-|z|^2)^{-2} \,,
\end{equation}
where $m'$ stands for the derivatives of $m$, the pull-back of $\mu$
under a mapping $m\in \Md$ takes on a particularly simple
expression. Setting $w=m(z)$, with $w=y^1+\bfi  y^2$, and
$\widetilde{g}(w)=\widetilde{\mu}(w)\delta_{ij}dy^i\otimes dy^j =
\mu(w) (1-|w|^2)^{-2}\delta_{ij}dy^i\otimes dy^j$, the definition
\[
(m^*\widetilde{g})(z)_{kl}\,dx^{k}\otimes dx^{\ell} :=
\mu(w)\,(1-|w|^2)^{-2}\,\delta_{ij} \, dy^{i}\otimes dy^{j}
\]
implies
%\begin{equation*}
\begin{align*}
(m^*\widetilde{g})_{k\ell}(z)\,dx^{k}\otimes dx^{\ell} &=\mu(m(z))(1-|m(z)|^2)^{-2}\,\delta_{ij}\,\frac{\partial y^i}{\partial x^k}\,\frac{\partial y^j}{\partial x^\ell} \,dx^{k}\otimes dx^{\ell}\\
&= \mu(m(z))\,(1-|m(z)|^2)^{-2}\,|m'(z)|^2\,\delta_{k\ell} \,dx^{k}\otimes dx^{\ell}\\
&=\mu(m(z))\,(1-|z|^2)^{-2}\,\delta_{k\ell}\,dx^{k}\otimes dx^{\ell}.\\
\end{align*}
%\end{equation*}
In other words, $(m^*\widetilde{g})(z)_{kl}\,dx^{k}\otimes
dx^{\ell}$ takes on the simple form
$m^*\mu(z)\,(1-|z|^2)^{-2}\,\delta_{kl}\,dx^{k}\otimes dx^{\ell}$,
with
\begin{equation}\label{e:pullback_of_metric_density_mu_by_mobius}
    m^*\mu(z) = \mu(m(z)).
\end{equation}
Likewise, the push-forward, under a disk M\"{o}bius transform
$m(z)=w$, of the (diagonal) Riemannian metric defined by the
density function $\mu=\mu^H$, is again a diagonal metric, with
(hyperbolic) density function $m_{*}\mu (w)=\left(m_{*}\mu
\right)^H(w)$ given by
\begin{equation}\label{e:push_forward_of_metric_density}
    m_* \mu(w) = \mu(m^{-1}(w)).
\end{equation}

It follows that checking whether or not two surfaces $\M$ and $\N$
are isometric, or searching for (near-)\ isometries between $\M$ and
$\N$, is greatly simplified by considering the conformal mappings
from $\M$, $\N$ to $\D$: once the (hyperbolic) density functions
$\mu$ and $\nu$ are known, it suffices to identify $m \in \Md$ such
that $\nu(m(z))$ and $\mu(z)$ coincide (or ``nearly'' coincide, in a
sense to be made precise). This was exploited in
\cite{Lipman:2009:MVF} to construct fast algorithms to find
corresponding points between two given surfaces. In the next
section we provide a precise formalization of this idea using the
notion of {\em optimal mass transportation}, described in the following subsection.

% ----------------------------------------------------------------
\subsection{Optimal mass transportation}

Optimal mass transportation was introduced by G. Monge
\cite{Monge1781}, and L. Kantorovich \cite{Kantorovich1942}. It
concerns the transformation of one mass distribution into another
while minimizing a cost function that can be viewed as the amount of
work required for the task. In the Kantorovich formulation, to which
we shall stick in this paper, one considers two measure spaces $X,Y$
(each equipped with a $\sigma$-algebra), a probability measure on
each, $\mu \in P(X)$, $\nu \in P(Y)$ (where $P(X),P(Y)$ are the
respective spaces of all probability measures on $X$ and $Y$), and
the space $\Pi(\mu,\nu)$ of probability measures $\pi$ on $X \times
Y$ with marginals $ \mu$ and $\nu$ (resp.), that is, for $A\subset
X$, $B\subset Y$, $\pi(A\times Y) = \mu(A)$ and $\pi(X \times B) =
\nu(B)$. The \emph{optimal} mass transportation is the element  of
$\Pi(\mu,\nu)$ that minimizes $\int_{X \times Y}d(x,y)d\pi(x,y)$,
where $d(x,y)$ is a cost function. (In general, one should consider
an infimum rather than a minimum; in our case, $X$ and $Y$ are
compact, $d(\cdot,\cdot)$ is continuous, and the infimum is
achieved.) The corresponding minimum,
\begin{equation}\label{e:basic_Kantorovich_transporation}
    T_d(\mu,\nu) = \mathop{\inf}_{\pi \in \Pi(\mu,\nu)}\int_{X \times Y}d(x,y)d\pi(x,y),
\end{equation}
is the optimal mass transportation distance between $\mu$ and $\nu$,
with respect to the cost function $d(x,y)$.

Intuitively, one can interpret this as follows: imagine being
confronted with a pile of sand on the one hand (corresponding to
$\mu$), and a hole in the ground on the other hand ($-\nu$), and
assume that the volume of the sand pile equals exactly the volume of
the hole (suitably normalized, $\mu,\nu$ are probability measures).
You wish to fill the hole with the sand from the pile ($\pi \in
\Pi(\mu,\nu)$), in a way that minimizes the amount of work
(represented by $\int d(x,y)d\pi(x,y)$, where $d(\cdot,\cdot)$ can
be thought of as a distance function).

In what follows, we shall apply this framework to the density
functions $\mu$ and $\nu$ on the hyperbolic disk $\D$ obtained by
conformal mappings from two surfaces $\M$, $\N$, as described in the
previous subsection.

The Kantorovich transportation framework cannot be applied
directly to the densities $\mu,\nu$. Indeed, the density $\mu$,
characterizing the Riemannian metric on $\D$ obtained by pushing
forward the metric on $\M$ via the uniformizing map $\phi:\M
\rightarrow \D$, is not uniquely defined: another uniformizing map
$\phi':\M \rightarrow \D$ may well produce a different $\mu'$.
Because the two representations are necessarily isometric
($\phi^{-1} \circ \phi'$ maps $\M$ isometrically to itself), we must
have $\mu'(m(z))=\mu(z)$ for some $m \in \Md$. (In fact, $m=\phi'
\circ \phi^{-1}$.) In a sense, the representation of (disk-type)
surfaces $\M$ as measures over $\D$ should be considered ``modulo''
the disk M\"{o}bius transformations.

We thus need to address how to adapt the optimal transportation
framework to factor out this M\"{o}bius transformation ambiguity.
The next section starts by showing how this can be done.
%This is done by designing a special distance (or cost) functional
%$d^R_{\mu,\nu}(z,w)$ that {\em depends} on the conformal densities
%$\mu$ and $\nu$ representing the two surfaces. (A fairly simple
%argument shows that a cost function that does not depend on $\mu$
%and $\nu$ allows only trivial answers, such as $d(z,w)=0$ for all
%$z,w$.) As we shall see in the next section,
%Section \ref{s:optimal_vol_trans_for_surfaces},
%the cost function we construct will have an intuitive explanation:
%$d^R_{\mu,\nu}(z,w)$ will measure how well an $R$-sized neighborhood
%of $z$ with density $\mu$ can be matched isometrically to an
%$R$-sized neighborhood of $w$ with density $\nu$ by means of a disk
%M\"{o}bius transformation.

% ----------------------------------------------------------------
\section{{\ingg{The conformal Wasserstein framework: o}}ptimal volume transportation for surfaces}
\label{s:optimal_vol_trans_for_surfaces}
We want to measure distances between surfaces by using the
Kantorovich transportation framework to measure the transportation
between the metric densities on $\D$ obtained by uniformization
applied to the surfaces. The main obstacle is that these metric
densities are not uniquely defined; they are defined up to a
M\"{o}bius transformation. In particular, if two densities $\mu$ and
$\nu$ are related by $\nu=m_*\mu$ (i.e. $\mu(z)=\nu(m(z))$), where
$m \in \Md$, then we want our putative distance between $\mu$ and
$\nu$ to be zero, since they describe isometric surfaces, and could
have been obtained by different uniformization maps of the same
surface. We thus want a distance metric between \emph{orbits} of the
group $\Md$ acting on the conformal factors rather than a
metric distance between the conformal factors themselves. If we
choose a metric distance $d$ on $\D$ that is invariant under
M\"{obius} transformations, i.e. that it is a multiple of the
hyperbolic distance on the disk, then a natural definition is as
follows
\begin{equation}
\mbox{\hausaD}(\mu,\nu)=\inf_{m \in \Md} \left( \inf_{\pi \in
\Pi(m_*\mu, \nu)}\,\int_{\D \times \D} d(z,w)\,d\pi(z,w)\,\right).
\label{quot_dist}
\end{equation}
As shown in the Appendix, $\mbox{\hausaD}(\mu,\nu)$ is indeed
a distance between disk-type surfaces; its computation can moreover
be implemented in running times that grow only polynomially in the number
$N$ of sample points used in the discretization of the surface (necessary to
proceed to numerical computation). 
The optimization over $m$ in the definition
of $\mbox{\hausaD}(\mu,\nu)$ always achieves its minimum in some $m$
(depending on $\mu$ and $\nu$ of course); denoting 
this special
minimizing $m \in \Md$ by $m_{\mu,\nu}$, we can rewrite $\mbox{\hausaD}(\mu,\nu)$ 
as the result of a single minimization (for details, see Appendix):
\begin{align} 
\mbox{\hausaD}(\mu,\nu)&=  \inf_{\pi \in \Pi([m_{\mu,\nu}]_*\mu, \nu)}\,\int_{\D \times \D}
d(z,w)\,d\pi(z,w)\nonumber\\
&=\inf_{\pi \in \Pi(\mu, \nu)}\,\int_{\D \times \D}
d(m_{\mu,\nu}(z),w)\,d\pi(z,w)\,.\label{id_Dist}
\end{align}
This is, however, purely formal; since the determination of $m_{\mu,\nu}$ involves the
original double minimization of (\ref{quot_dist}), a numerical implementation
does require solving a mass-transportation functional for many $m \in \Md$.
In practice, this means that, despite its polynomial running time complexity, 
the numerical computation of $\mbox{\hausaD}(\mu,\nu)$ is too heavy for many
applications, in which all pairwise distances must be computed for a 
collection of that may contain hundreds of surfaces \cite{pnas}.
This seems to lead to an impasse, since  there exists no other distance metric
on $\D$ that is conformally invariant, so that the natural ``quotienting operation''
over the group $\Md$ can produce no other metric than $\mbox{\hausaD}(\cdot,\cdot)$.

However, $\mbox{\hausaD}(\mu,\nu)$, rewritten as in (\ref{id_Dist}),
suggests another way in which we can define an appropriate distance metric between orbits of the
group $\Md$ acting on the conformal factors. Note that (\ref{id_Dist}) has exactly the same
form as for a standard
Kantorovich mass transportation scheme, except for the (crucial) difference that
{\em the cost function
depends on $\mu$ and $\nu$}. By retaining the idea of (Kantorovich) mass transportation,
but allowing the use of  cost functions
$d(\cdot,\cdot)$ in the integrand that depend on $\mu$ and $\nu$ 
(without picking them necessarily of the form $d(m_{\mu,\nu}(z),w)$), we can construct
other distance metrics $D$ on the conformal factors that are invariant under action of $\Md$,
i.e. for which $D(\mu, \nu)= D(m_*\mu, \nu)$ for all $m \in \Md$.
In addition, we can pick cost functions of this type
ensuring that the distance between (or dissimilarity of) $\mu$
and $\nu$ exhibits some robustness with respect to
deviations from global isometry. More precisely, we want the
distance to be small for surfaces that are not isometric but
nevertheless very close to isometric on {\em parts} of the surfaces; this
can be achieved by picking a cost function $d^R_{\mu,\nu}(z,w)$
that depends on a comparison of the behavior of $\mu$ and
$\nu$ on {\em neighborhoods} of $z$ and $w$, mapped by $m$ ranging
over $\Md$. This cost function, once incorporated in the
Kantorovich mass transportation framework, will lead to a metric
between disk-type surfaces (some generic conditions aside) based on
solving a \emph{single} mass transportation problem.  The next
subsection shows precisely how this is done. As is the case
throughout the paper, we first give the full details of the construction
for disk-like surfaces, and then indicate later how to generalize
this to e.g. sphere-like surfaces. It is worthwhile to note that the
``quotient approach'' sketched above would not even have been applicable
in a straightforward way to sphere-like surfaces,
since they do not possess a metric invariant under all their
M\"{o}bius transformations. As we shall explain at the end of
this section, the same obstruction will not exist for the construction
introduced in the next subsection.
% ----------------------------------------------------------------
\subsection{Construction of $d^R_{\mu,\nu}(z,w)$}

We construct $d^R_{\mu,\nu}(z,w)$ so that it indicates the extent
to which a neighborhood of the point $z$ in
$(\D,\mu)$, the (conformal representation of the) first surface,
is isometric with a neighborhood of the point $w$ in $(\D,\nu)$, the (conformal representation of the) second surface. We will need to define two ingredients
for this: the neighborhoods we will use, and how we shall characterize the
(dis)similarity of two neighborhoods, equipped with different metrics.

We start with the neighborhoods.

For a fixed radius $R>0$, we define $\Omega_{z_0,R}$ to be the
hyperbolic geodesic disk of radius $R$ centered at $z_0$. The
following gives an easy procedure to construct these disks. If
$z_0=0$, then  the hyperbolic geodesic disks centered at $z_0=0$ are
also  ``standard'' (i.e. Euclidean) disks centered at 0:
$\Omega_{0,R} = \{z \,;\, |z|\leq r_R \}$, where
$r_R=\mbox{tanh}(R)$. The hyperbolic disks around other centers
are images of these central disks under M\"{o}bius transformations
(= hyperbolic isometries): setting
$m(z)=(z-z_0)(1-z\bar{z_0})^{-1}$, we have
\begin{equation}\label{e:neighborhood_def}
    \Omega_{z_0,R} = m^{-1}(\Omega_{0,R})\,.
\end{equation}
If $m'$, $m''$ are two maps in $\Md$ that both map $z_0$ to 0, then
$m'' \circ (m')^{-1}$
simply rotates $\Omega_{0,R}$ around its center, over some angle $\theta$ determined by $m'$ and $m''$. From this observation one easily checks that
(\ref{e:neighborhood_def})
holds for {\em any} $m \in \Md$ that maps
$z_0$ to $0$. In fact, we have the following more general
\begin{lem}\label{lem:m(omega_z)=omega_w}
For arbitrary $z,w \in \D$ and any $R>0$, every disk preserving M\"{o}bius transformation
$m\in \Md$ that maps $z$ to $w$ (i.e. $w=m(z)$) also maps $\Omega_{z,R}$ to
$\Omega_{w,R}$.
\end{lem}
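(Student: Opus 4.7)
The plan is to reduce the general case to the paragraph preceding the lemma, where the statement is verified for the special situation when $m$ maps the center of the source disk to $0$. Two closely related routes are available: an intrinsic argument via hyperbolic isometry, and a composition argument using the constructive definition~(\ref{e:neighborhood_def}). I would present the composition argument, since it reuses exactly the observation the authors just made (two elements of $\Md$ sending $z_0$ to $0$ differ by a rotation of $\Omega_{0,R}$), and avoids invoking the hyperbolic distance function explicitly.

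First I would choose auxiliary maps. Pick $m_z \in \Md$ with $m_z(z)=0$ and $m_w \in \Md$ with $m_w(w)=0$; explicit choices are available from (\ref{e:disk_mobius}), e.g.\ $m_z(\zeta) = (\zeta - z)(1 - \bar z \zeta)^{-1}$. By the definition (\ref{e:neighborhood_def}) we have
\[
\Omega_{z,R} = m_z^{-1}(\Omega_{0,R}), \qquad \Omega_{w,R} = m_w^{-1}(\Omega_{0,R}).
\]
Then I would consider the composition $\rho := m_w \circ m \circ m_z^{-1}$. Since $\Md$ is a group, $\rho \in \Md$; and since $m_z^{-1}(0)=z$, $m(z)=w$, $m_w(w)=0$, we have $\rho(0)=0$.

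Next I would apply the observation made just before the lemma: any element of $\Md$ that fixes the origin is a rotation $\zeta \mapsto e^{i\theta}\zeta$, and any such rotation sends the Euclidean (equivalently, hyperbolic) disk $\Omega_{0,R}$ to itself. Hence $\rho(\Omega_{0,R}) = \Omega_{0,R}$, which rearranges to
\[
m(m_z^{-1}(\Omega_{0,R})) = m_w^{-1}(\Omega_{0,R}),
\]
i.e.\ $m(\Omega_{z,R}) = \Omega_{w,R}$, as required.

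There is no real obstacle here; the only point that deserves care is justifying that every element of $\Md$ fixing the origin is a rotation, which is immediate from the parametrization (\ref{e:disk_mobius}) by setting $a = 0$. An alternative one-line proof would instead invoke (\ref{e:disk_mobius_is_isometry_of_hyperbolic_geom}) to note that $m$ is a hyperbolic isometry, observe that $\Omega_{z,R}$ is exactly the hyperbolic ball of radius $R$ about $z$, and conclude that $m$ maps balls about $z$ to balls of the same radius about $m(z)=w$; I would mention this as a remark but keep the composition argument as the main proof since it parallels the paper's construction.
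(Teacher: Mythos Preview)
Your proof is correct and matches the paper's intent exactly: the paper does not give a formal proof of this lemma, but introduces it with ``In fact, we have the following more general'' immediately after the rotation observation you cite, signaling that the lemma is meant to follow from precisely the composition argument you wrote out. Your alternative isometry remark is also fine but, as you note, the composition route is the one the paper's setup invites.
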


Next we define how to quantify the (dis)similarity of the pairs
$\left(\Omega_{z_0,R}\,,\, \mu\,\right)$ and $\left(\Omega_{w_0,R}\,,\, \nu\,\right)$.
Since (global) isometries are given by the elements of the
disk-preserving M\"{o}bius group
$\Md$, we will test the extent to which the two patches are isometric
by comparing $\left(\Omega_{w_0,R}\,,\, \nu\,\right)$ with all the images of
$\left(\Omega_{z_0,R}\,,\, \mu\,\right)$ under M\"{o}bius transformations in $\Md$ that take $z_0$ to $w_0$.

To carry out this comparison, we need a norm.
Any metric $g_{ij}(z) dx^i \otimes dx^j$ induces an inner product
on the space of 2-covariant tensors, as follows: if
$\mathbf{a}(z) = a_{ij}(z) \,dx^i \otimes dx^j$
and $\mathbf{b}(z) = b_{ij}(z) \,dx^i \otimes dx^j$
are two 2-covariant tensors in our parameter
space $\D$, then their inner product is defined by
\begin{equation}\label{e:inner_product_2-covariant_tensor}
    \langle \mathbf{a}(z), \mathbf{b}(z)\rangle =
a_{ij}(z)\,b_{k\ell}(z)\,g^{ik}(z)\,g^{j\ell}(z)~;
\end{equation}
as always, this inner product defines a norm,
$\|\mathbf{a}\|_z^2 = a_{ij}(z)\,a_{k\ell}(z)\,g^{ik}(z)\,g^{j\ell}(z)$.
Let us apply this to the computation of the norm of the
difference between the local metric on one surface,
$g_{ij}(z)=\mu(z)(1-|z|^2)^{-2}\delta_{ij}$, and
$h_{ij}(w)=\nu(w)(1-|w|^2)^{-2}\delta_{ij}$, the pull-back metric
from the other surface by a M\"{o}bius transformation $m$. Using
(\ref{e:inner_product_2-covariant_tensor}),
(\ref{e:pullback_of_metric_density_mu_by_mobius}), and writing
$\ddelta$, $\bg$, $\bh$, for the tensors with entries $\delta_{ij}$,  $g_{ij}$, and  
$h_{ij}$, respectively, we have:
\begin{align*}
\|\bg - m^*\bh\|_{z}^2 & = \|\,\mu(z) (1-|z|^2)^{-2}\ddelta -
\nu(m(z))
(1-|z|^2)^{-2} \ddelta\,\|_{z}^2 \\
& = \Big(\mu(z) - \nu(m(z))\Big)^2(1-|z|^2)^{-4}\,\delta_{ij}\,\delta_{k\ell}
\,g^{ik}(z)\,g^{j\ell}(z)=\left(1 - \frac{\nu(m(z))}{\mu(z)}\right)^2.
\end{align*}

{\ing{For every pair of $\mu,\, \nu$, w}}e are now ready to define
the distance function $d^R_{\mu,\nu}(\cdot,\cdot)$ on $\mathcal{D}$:
\begin{equation}\label{e:d_mu,nu(z,w)_def}
    d^R_{\mu,\nu}(z_0,w_0) :=
\mathop{\mathop{\inf}_{m \in \Md\,,}}_{m(z_0)=w_0}\int_{\Omega_{z_0,R}}
\,|\,\mu(z) - (m^*\nu)(z)\,|\, d\vol_H(z),
\end{equation}
where $d\vol_H(z)=(1-|z|^2)^{-2} \,dx\wedge dy$ is the volume form
for the hyperbolic disk. The integral in (\ref{e:d_mu,nu(z,w)_def})
can also be written in the following form, which makes its invariance more
readily apparent:
\begin{equation}\label{e:d_invariant_form}
    \int_{\Omega_{z_0,R}}\left|\,1 - \frac{\nu(m(z))}{\mu(z)}\right| \,d\vol_\M(z)
    = \int_{\Omega_{z_0,R}} \|\bg - m^*\bh \|_z \, d\vol_\M(z),
\end{equation}
where $d\vol_\M(z)=\mu(z)(1-|z|^2)^{-2}\,dx^1\wedge dx^2
=\sqrt{|g_{ij}|}\,dx^1\wedge dx^2$ is the volume form of the first surface $\M$.

The next Lemma shows that although the integration
in (\ref{e:d_invariant_form}) is carried out w.r.t.
the volume of the first surface, this measure of distance is nevertheless symmetric:
\begin{lem}\label{lem:symmetry_of_d_integral}
If  $m\in \Md$ maps $z_0$ to $w_0$, $m(z_0)=w_0$, then
$$
\int_{\Omega_{z_0,R}}\Big|\,\mu(z) - m^*\nu(z) \,\Big|\, d\vol_H(z) =
\int_{\Omega_{w_0,R}}\Big|\,m_*\mu(w) - \nu(w) \, \Big|\, d\vol_H(w).
$$
\end{lem}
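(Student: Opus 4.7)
The plan is to prove the identity by a single change of variables $w = m(z)$, exploiting the fact that every $m \in \Md$ is a hyperbolic isometry of $\D$.

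First, I will invoke Lemma \ref{lem:m(omega_z)=omega_w} to conclude that the change of variables $w = m(z)$ sends the domain of integration $\Omega_{z_0,R}$ bijectively onto $\Omega_{w_0,R}$, since $m(z_0) = w_0$. This handles the matching of the regions of integration.

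Next, I will use \eqref{e:disk_mobius_is_isometry_of_hyperbolic_geom}, which states that M\"obius disk automorphisms preserve the hyperbolic volume form, to replace $d\vol_H(z)$ with $d\vol_H(w)$ under the substitution. Concretely, the Jacobian factor $|m'(z)|^2$ produced by the Euclidean change of variables exactly cancels the ratio of $(1-|z|^2)^{-2}$ and $(1-|w|^2)^{-2}$, so $d\vol_H$ is an invariant measure.

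Finally, I will rewrite the integrand under the substitution. On the one hand, $m^*\nu(z) = \nu(m(z)) = \nu(w)$ by \eqref{e:pullback_of_metric_density_mu_by_mobius}, and on the other hand, $\mu(z) = \mu(m^{-1}(w)) = m_*\mu(w)$ by \eqref{e:push_forward_of_metric_density}. Hence
\[
\big|\,\mu(z) - m^*\nu(z)\,\big| = \big|\,m_*\mu(w) - \nu(w)\,\big|,
\]
and assembling the three ingredients gives the claimed equality. There is no real obstacle here; the only point requiring a moment of care is making sure that the densities transform as claimed under pullback/pushforward, but this has already been recorded in the earlier subsection, so the proof reduces to a clean one-line change of variables.
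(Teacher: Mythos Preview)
Your proposal is correct and follows essentially the same approach as the paper's own proof: a change of variables $w=m(z)$ (the paper writes it as $z=m^{-1}(w)$), invoking Lemma~\ref{lem:m(omega_z)=omega_w} for the domains, the hyperbolic-isometry identity \eqref{e:disk_mobius_is_isometry_of_hyperbolic_geom} for the volume form, and formulas \eqref{e:pullback_of_metric_density_mu_by_mobius}--\eqref{e:push_forward_of_metric_density} for the integrand.
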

\begin{proof}
By the pull-back formula (\ref{e:pullback_of_metric_density_mu_by_mobius}),
we have
$$
\int_{\Omega_{z_0,R}}\Big|\,\mu(z) - m^*\nu(z) \,\Big|\, d\vol_H(z) =
\int_{\Omega_{z_0}}\Big|\,\mu(z) - \nu(m(z)) \,\Big|\, d\vol_H(z).
$$
Performing the change of coordinates $z=m^{-1}(w)$ in
the integral on the right hand side,
we obtain
$$
\int_{m(\Omega_{z_0,R})}\, \Big|\,\mu(m^{-1}(w)) - \nu(w) \Big|\, d\vol_H(w),
$$
where we have used that $m^{-1}$ is an isometry and therefore preserves
the volume element $d\vol_H(w)=(1-|w|^2)^{-2} \,dy^1 \wedge dy^2$.
By Lemma \ref{lem:m(omega_z)=omega_w}, $\,m(\Omega_{z_0,R})=\Omega_{w_0,R}\,$;
using the push-forward formula (\ref{e:push_forward_of_metric_density})
then allows to conclude.
\end{proof}

Note that our point of view in defining our ``distance'' between $z$ and $w$
differs from the classical point of view in mass transportation:
traditionally, $d(z,w)$ is some sort of \emph{physical distance} between the points $z$ and $w$;
in our case $d^R_{\mu,\nu}(z,w)$ measures the dissimilarity of
(neighborhoods of) $z$ and $w$.

The next Theorem lists some important properties of $d^R_{\mu,\nu}$;
its proof is given in the Appendix.
%\newpage
\begin{thm}\label{thm:properties_of_d}
The distance function $d^R_{\mu,\nu}(z,w)$ satisfies the following properties
\begin{table}[ht]
\begin{tabular}{c l l}
{\rm (1)} & $~d^R_{m^*_1\mu,m^*_2\nu}(m^{-1}_1(\z),m^{-1}_2(\w)) = d^R_{\mu,\nu}(\z,\w)~$ & {\rm Invariance under (well-defined)}\\
& & {\rm M\"{o}bius changes of coordinates} \\
&&\\
{\rm (2)} & $~d^R_{\mu,\nu}(\z,\w) = d^R_{\nu,\mu}(\w,\z)~$ & {\rm Symmetry} \\
&&\\
{\rm (3)} & $~d^R_{\mu,\nu}(\z,\w) \geq 0~$ & {\rm Non-negativity} \\
&&\\
{\rm (4)} &\multicolumn{2}{c}{$\!\!\!\!\!\!d^R_{\mu,\nu}(\z,\w) = 0 \,\Longrightarrow \, \Omega_{z_0,R}$ {\rm in} $(\D,\mu)$ {\rm and} $\Omega_{w_0,R}$ {\rm in} $(\D,\nu)$ {\rm are isometric} }\\
&&\\
{\rm (5)} & $~d^R_{m^*\nu, \nu}(m^{-1}(\z),\z)=0~$ & {\rm Reflexivity} \\
&&\\
{\rm (6)} & $~d^R_{\mu_1,\mu_3}(z_1,z_3) \leq d^R_{\mu_1,\mu_2}(z_1,z_2) + d^R_{\mu_2,\mu_3}(z_2,z_3)~$ &
{\rm Triangle inequality}
\end{tabular}
\end{table}

\end{thm}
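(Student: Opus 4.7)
The plan is to dispatch the six properties one at a time, relying on three recurring tools: the pull-back identity $m^*\mu(z)=\mu(m(z))$ from (\ref{e:pullback_of_metric_density_mu_by_mobius}), Lemma \ref{lem:m(omega_z)=omega_w} (a Möbius map taking $z_0$ to $w_0$ carries $\Omega_{z_0,R}$ onto $\Omega_{w_0,R}$), and the fact that $d\vol_H$ is invariant under every $m\in\Md$. The first three properties are essentially immediate. Property (3) holds because the integrand is an absolute value. Property (5) is realized by the single competitor $m$ itself: it sends $m^{-1}(\z)$ to $\z$, and the integrand $|m^*\nu-m^*\nu|$ vanishes identically. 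Property (2) is the direct consequence of Lemma \ref{lem:symmetry_of_d_integral}: the map $m\leftrightarrow m^{-1}$ is a bijection between $\{m\in\Md:\, m(\z)=\w\}$ and $\{m\in\Md:\, m(\w)=\z\}$, and the lemma equates the corresponding integrals, so the two infima coincide.

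For (1), the invariance, I will change coordinates inside the defining integral for $d^R_{m_1^*\mu,m_2^*\nu}(m_1^{-1}(\z),m_2^{-1}(\w))$. Writing $z=m_1^{-1}(z')$ and using the pull-back formula twice, the integrand $|m_1^*\mu(z)-(m')^*m_2^*\nu(z)|$ becomes $|\mu(z')-\nu(\widetilde m(z'))|$ where $\widetilde m:=m_2\circ m'\circ m_1^{-1}$. By Lemma \ref{lem:m(omega_z)=omega_w} the domain $\Omega_{m_1^{-1}(\z),R}$ is carried to $\Omega_{\z,R}$, and $m_1$ preserves $d\vol_H$. As $m'$ ranges over the admissible Möbius maps for the left-hand side, $\widetilde m$ ranges exactly over $\{\widetilde m\in\Md:\,\widetilde m(\z)=\w\}$; taking the infimum then recovers $d^R_{\mu,\nu}(\z,\w)$.

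Property (6) is the standard concatenation argument. Fix $\eps>0$; choose $m_{12}\in\Md$ with $m_{12}(z_1)=z_2$ realizing $d^R_{\mu_1,\mu_2}(z_1,z_2)$ to within $\eps$, and similarly $m_{23}$ with $m_{23}(z_2)=z_3$. Then $m_{13}:=m_{23}\circ m_{12}$ is admissible for $d^R_{\mu_1,\mu_3}(z_1,z_3)$, and the key identity $m_{13}^*\mu_3=m_{12}^*(m_{23}^*\mu_3)$ lets me insert $\pm\,m_{12}^*\mu_2$ inside the integrand and apply the pointwise triangle inequality. The first resulting piece is within $\eps$ of $d^R_{\mu_1,\mu_2}(z_1,z_2)$ by construction; in the second, the substitution $w=m_{12}(z)$, combined with the $\Md$-invariance of $d\vol_H$ and Lemma \ref{lem:m(omega_z)=omega_w} for the domain, converts it into the integral that witnesses $d^R_{\mu_2,\mu_3}(z_2,z_3)$, hence is within $\eps$ of it. Sending $\eps\downarrow 0$ finishes the argument.

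The main obstacle is property (4), because an infimum that merely equals zero would only yield an approximate isometry, not an actual one. The step to address this is compactness: the set $\{m\in\Md:\,m(\z)=\w\}$ is a one-parameter family parametrized by a single rotation angle $\theta\in[0,2\pi]$ (any two such Möbius maps differ by a rotation of $\Omega_{\z,R}$ about $\z$, as noted just before Lemma \ref{lem:m(omega_z)=omega_w}), hence compact. The functional $\theta\mapsto\int_{\Omega_{\z,R}}|\mu-m_\theta^*\nu|\,d\vol_H$ is continuous in $\theta$ under the standing smoothness/integrability assumptions on $\mu,\nu$, so the infimum is attained by some $m_\ast$. The hypothesis $d^R_{\mu,\nu}(\z,\w)=0$ then forces $\mu(z)=\nu(m_\ast(z))$ almost everywhere on $\Omega_{\z,R}$; together with Lemma \ref{lem:m(omega_z)=omega_w} identifying $m_\ast(\Omega_{\z,R})=\Omega_{\w,R}$, this exhibits $m_\ast$ as an explicit isometry between the two neighborhoods equipped with their respective Riemannian metrics, completing the proof.
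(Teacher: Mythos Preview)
Your proof is correct and follows essentially the same route as the paper: the same change of variables $\widetilde m=m_2\circ m'\circ m_1^{-1}$ for (1), the same appeal to Lemma~\ref{lem:symmetry_of_d_integral} for (2), the same concatenation $m_{13}=m_{23}\circ m_{12}$ with the insertion of $\pm\,m_{12}^*\mu_2$ for (6), and the same one-line observations for (3) and (5). The only noteworthy difference is in (4): the paper dismisses it as ``immediate from the definition'' and only later (via Lemma~\ref{lem:auxiliary}) establishes that the infimum over the compact one-parameter family $\{m:m(\z)=\w\}\cong S^1$ is actually attained, whereas you correctly fold that compactness/continuity step into the proof of (4) itself---which is the more honest organization, since without attainment the conclusion would not follow.
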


In addition, the function
$d^R_{\mu,\nu}:\D\times\D\,\rightarrow\,\R$ is continuous. To show
this, we first look a little more closely at the {\ingg{ 1-parameter}} family of disk
M\"{o}bius transformations that map one pre-assigned point $z_0 \in
\D$ to another pre-assigned point $w_0 \in \D$.

\begin{defn}\label{def:M_D,z_0,w_0}
For any pair of points $z_0,\,w_0 \in \D$, we denote by
$M_{D,z_0,w_0}$ the set of M\"{o}bius transformations that map $z_0$
to $w_0$.
\end{defn}

This family of M\"{o}bius transformations is completely
characterized by the following lemma:

\begin{lem}\label{lem:a_and_tet_formula_in_mobius_interpolation}
For any $z_0,w_0 \in \D$, the set $M_{D,z_0,w_0}$ constitutes a
$1$-parameter family of disk M\"{o}bius transformations,
parametrized continuously over $S^1$ (the unit circle). More
precisely, every $m \in M_{D,z_0,w_0}$ is of the form
\begin{equation}\label{e:a_of_mobius}
% \nonumber to remove numbering (before each equation)
m(z)= \tau\,\frac{z-a}{1-\overline{a}z}~,~~\mbox{ {\rm{with} }}~~ a
= a(z_0,w_0,\sigma) :=\frac{z_0-w_0
\,\overline{\sigma}}{1-\overline{z_0}\,w_0\,\overline{\sigma}}
~~~\mbox{{\rm and }}~~ \tau = \tau(z_0,w_0,\sigma) := \sigma
\frac{1- \overline{z_0} \,w_0 \,\overline{\sigma}}
{1-z_0\,\overline{w_0}\, \sigma},
\end{equation}
where $\sigma \in S_1:=\{z \in \C\,;\,|z|=1\}$ can be chosen freely.
\end{lem}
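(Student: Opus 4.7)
The plan is to realize every element of $M_{D,z_0,w_0}$ as a composition of two canonical disk M\"obius transformations with a single rotation. Specifically, let
$$
f(z) = \frac{z-z_0}{1-\overline{z_0}\,z}\,, \qquad g(v) = \frac{v+w_0}{1+\overline{w_0}\,v}\,,
$$
both of which lie in $\Md$, with $f(z_0)=0$ and $g(0)=w_0$. For any $m \in M_{D,z_0,w_0}$ the composition $g^{-1}\circ m\circ f^{-1}$ lies in $\Md$ and fixes $0$; by the standard fact (an immediate consequence of Schwarz's lemma) that the only disk M\"obius transformations fixing the origin are rotations, we obtain $g^{-1}\circ m\circ f^{-1} = r_\sigma$ with $r_\sigma(u)=\sigma u$ for some $\sigma\in S^1$. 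Conversely, every $\sigma\in S^1$ yields $m_\sigma := g\circ r_\sigma\circ f \in M_{D,z_0,w_0}$, and the correspondence $\sigma\mapsto m_\sigma$ is continuous (even holomorphic in $\sigma$). This establishes that $M_{D,z_0,w_0}$ is a $1$-parameter family continuously parametrized by $S^1$.

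The remaining task is to put $m_\sigma$ in the normalized form $\tau (z-a)/(1-\bar a z)$ and read off $a$, $\tau$. I would expand
$$
m_\sigma(z) \;=\; \frac{\sigma\,\frac{z-z_0}{1-\overline{z_0}z} + w_0}{1 + \overline{w_0}\,\sigma\,\frac{z-z_0}{1-\overline{z_0}z}}
\;=\; \frac{(\sigma - w_0\overline{z_0})\,z + (w_0 - \sigma z_0)}{(\overline{w_0}\,\sigma - \overline{z_0})\,z + (1-\overline{w_0}\,\sigma\,z_0)}\,,
$$
then factor $(\sigma - w_0\overline{z_0})$ out of the numerator and $(1-\overline{w_0}\,\sigma\,z_0)$ out of the denominator. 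Multiplying numerator and denominator by $\overline{\sigma}$ in the resulting expression for $a$ produces
$$
a \;=\; \frac{\sigma z_0 - w_0}{\sigma - w_0\overline{z_0}} \;=\; \frac{z_0 - w_0\overline{\sigma}}{1-\overline{z_0}\,w_0\,\overline{\sigma}}\,,
$$
matching the formula in \eqref{e:a_of_mobius}, while the overall prefactor simplifies to
$$
\tau \;=\; \frac{\sigma - \overline{z_0}\,w_0}{1-\overline{w_0}\,z_0\,\sigma} \;=\; \sigma\,\frac{1-\overline{z_0}\,w_0\,\overline{\sigma}}{1-z_0\,\overline{w_0}\,\sigma}\,,
$$
which is the stated form of $\tau$. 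A short direct check that $|\tau|=1$ can be included as confirmation: the numerator and denominator of $\tau\overline{\tau}$ both expand to $1-\sigma z_0\overline{w_0} - \overline{\sigma}\,\overline{z_0}w_0 + |z_0|^2|w_0|^2$. Finally, $a \in \D$ is automatic because $m_\sigma \in \Md$ maps $a$ to $0\in\D$.

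The only potentially delicate point is the algebraic bookkeeping that converts the form produced by the composition into the specific normalization of \eqref{e:disk_mobius}; I do not anticipate any conceptual obstacle, as the entire argument reduces to the uniqueness-of-rotations principle combined with a direct computation.
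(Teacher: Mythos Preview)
Your proof is correct and follows essentially the same approach as the paper: both realize every $m\in M_{D,z_0,w_0}$ as a composition of canonical M\"obius maps sending $z_0\to 0$ and $0\to w_0$ with an intervening rotation, yielding the single $S^1$ parameter $\sigma$. The paper arrives at $\sigma=e^{\bfi(\psi-\gamma)}$ by starting with two rotation parameters and collapsing them, whereas you isolate the single rotation directly via the Schwarz-lemma observation; your explicit algebraic reduction to the normalized form is more detailed than the paper's ``working this out,'' but the underlying argument is the same.
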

\begin{proof}
By (\ref{e:disk_mobius}), the disk M\"{o}bius transformations that
map $z_0$ to $0$ all have the form
\[
m_{\psi,z_0}(z)=e^{\bfi \psi}\,\frac{z-z_0}{1-\overline{z_0}\,z}\,,
~\mbox{ the inverse of which is }~~ m_{\psi,z_0}^{-1}(w)=e^{-\bfi
\psi}\, \frac{w+e^{\bfi \psi}z_0}{1+ e^{-\bfi
\psi}\,\overline{z_0}w}~,
\]
where $\psi \in \R$ can be set arbitrarily. It follows that the
elements of $M_{D,z_0,w_0}$ are given by the family
$m_{\gamma,w_0}^{-1}\circ m_{\psi,z_0}$, with $\psi,\,\gamma \in
\R$. Working this out, one finds that these combinations of
M\"{o}bius transformations take the form (\ref{e:a_of_mobius}), with
$\sigma=e^{\bfi (\psi-\gamma)}$.
\end{proof}
We shall denote by $m_{z_0,w_0,\sigma}$ the special disk M\"{obius}
transformation defined by (\ref{e:a_of_mobius}). In view of our
interest in $d^R_{\mu,\nu}$, we also define the auxiliary function
\begin{eqnarray*}
\Phi&:& \D \times \D \times S_1 \longrightarrow \C\\
&&(z_0,w_0,\sigma) \longmapsto
\int_{\Omega_{z_0,R}}\,|\,\mu(z)-\nu(m_{z_0,w_0,\sigma}(z))\,|\,d\vol_H(z)~.
\end{eqnarray*}
This function has the following continuity properties, inherited
from $\mu$ and $\nu$:

\begin{lem}\label{lem:auxiliary} $~$\\
$\bullet$ For each fixed $(z_0,w_0)$, the function $\Phi(z_0,w_0,\cdot)$ is continuous on $S_1$.\\
$\bullet$ For each fixed $\sigma \in S_1$, $
\Phi(\cdot,\cdot,\sigma) $ is continuous on $\D \times \D$.
Moreover, the family $ \Big(\Phi(\cdot,\cdot,\sigma)\Big)_{\sigma
\in S_1} $ is equicontinuous.
\end{lem}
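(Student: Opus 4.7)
The plan is to reduce both claims to uniform continuity of a jointly continuous integrand on a compact set. The issue is that in the definition of $\Phi$, the integration domain $\Omega_{z_0,R}$ varies with $z_0$ but not with $\sigma$, so I treat the two parts separately, reserving a change-of-variables trick for the second.

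For the first bullet, fix $(z_0, w_0)$ and let $\sigma_n \to \sigma$ in $S_1$. By Lemma \ref{lem:m(omega_z)=omega_w}, every $m_{z_0,w_0,\sigma_n}$ sends $\overline{\Omega_{z_0,R}}$ onto the same compact set $\overline{\Omega_{w_0,R}} \subset \D$. From the explicit formulas in Lemma \ref{lem:a_and_tet_formula_in_mobius_interpolation}, the map $\sigma \mapsto m_{z_0,w_0,\sigma}$ is continuous in the sup-norm over $\overline{\Omega_{z_0,R}}$; combined with uniform continuity of $\nu$ on $\overline{\Omega_{w_0,R}}$, the integrands $z \mapsto |\mu(z)-\nu(m_{z_0,w_0,\sigma_n}(z))|$ converge uniformly on the fixed finite-hyperbolic-volume domain $\Omega_{z_0,R}$, so $\Phi(z_0,w_0,\cdot)$ is continuous at $\sigma$.

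For the second bullet I pull the integral back to a fixed domain using the hyperbolic isometry $m_{z_0}(z) = (z-z_0)(1-\overline{z_0}z)^{-1}$, which maps $\Omega_{z_0,R}$ onto $\Omega_{0,R}$ and preserves $d\vol_H$:
\begin{equation*}
\Phi(z_0,w_0,\sigma) = \int_{\Omega_{0,R}} \Big|\,\mu(m_{z_0}^{-1}(\zeta)) - \nu\big(m_{z_0,w_0,\sigma}(m_{z_0}^{-1}(\zeta))\big)\,\Big|\, d\vol_H(\zeta).
\end{equation*}
Let $F(z_0,w_0,\sigma,\zeta)$ denote the integrand, and fix any compact $K \subset \D \times \D$. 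For $(z_0,w_0)\in K$ the set $\overline{\Omega_{z_0,R}}$ stays in a fixed compact subset $K' \subset \D$, and $\overline{\Omega_{w_0,R}}$ stays in some $K'' \subset \D$, so $\mu$ and $\nu$ are uniformly continuous on $K'$ and $K''$ respectively. The formulas of Lemma \ref{lem:a_and_tet_formula_in_mobius_interpolation} make $(z_0,w_0,\sigma,\zeta) \mapsto m_{z_0,w_0,\sigma}(m_{z_0}^{-1}(\zeta))$ jointly continuous on $K \times S_1 \times \overline{\Omega_{0,R}}$, hence $F$ is continuous and, by compactness, uniformly continuous there.

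Uniform continuity of $F$ in $(z_0,w_0,\sigma)$ (uniform in $\zeta$) over the fixed, finite-volume domain $\Omega_{0,R}$ yields joint continuity of $\Phi$ on $K \times S_1$; in particular $\Phi(\cdot,\cdot,\sigma)$ is continuous for every $\sigma$. Equicontinuity of the family $\big(\Phi(\cdot,\cdot,\sigma)\big)_{\sigma \in S_1}$ on $K$ falls out of the same step: since $F$ is uniformly continuous on the compact product $K \times S_1 \times \overline{\Omega_{0,R}}$, its $(z_0,w_0)$-modulus of continuity can be chosen independent of $\sigma$, and this uniformity is preserved by integration over $\Omega_{0,R}$. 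The only nonroutine ingredient is the change of variables that removes the $z_0$-dependence of the integration domain; once that is done, the rest is a standard compactness-plus-uniform-continuity argument.
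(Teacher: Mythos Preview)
Your proof is correct. The first bullet is handled essentially the same way as in the paper (dominated/uniform convergence over a fixed finite-volume domain). For the second bullet and the equicontinuity, however, your route differs genuinely from the paper's.

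The paper compares $\Phi(z_0,w_0,\sigma)$ and $\Phi(z'_0,w'_0,\sigma)$ directly: it uses the M\"{o}bius map $m_{z_0,z'_0,1}$ to carry $\Omega_{z'_0,R}$ back to $\Omega_{z_0,R}$, and then establishes the uniformity in $\sigma$ by explicit algebraic estimates on $|a(z_0,w_0,\sigma)-a(z_0,w'_0,\sigma)|$ and $|\tau(z_0,w_0,\sigma)-\tau(z_0,w'_0,\sigma)|$, eventually bounding $|m_{z_0,w_0,\sigma}(z)-m_{z'_0,w'_0,\sigma}(m_{z_0,z'_0,1}(z))|$ uniformly in $\sigma$. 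Your argument instead pulls everything back to the single fixed domain $\Omega_{0,R}$ via $m_{z_0}$, writes $\Phi$ as the integral of a jointly continuous integrand $F$ over a compact parameter space $K\times S_1\times\overline{\Omega_{0,R}}$, and lets uniform continuity on a compact set do all the work. This is cleaner and avoids the explicit M\"{o}bius computations entirely; the paper's hands-on estimates, on the other hand, give concrete quantitative control that could in principle be reused elsewhere. One small point worth making explicit in your write-up: equicontinuity in the lemma is the pointwise notion on $\D\times\D$, and your argument on an arbitrary compact $K$ delivers exactly that, since every point of $\D\times\D$ has a compact neighborhood.
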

\begin{proof}
The proof of this Lemma is given in the Appendix.
\end{proof}

Note that since $S^1$ is compact, Lemma \ref{lem:auxiliary} implies
that the infimum in the definition of $d^R_{\mu,\nu}$ can be
replaced by a minimum:
\[
d^R_{\mu,\nu}(z_0,w_0)=\mathop{\min}_{m(z_0)=w_0}\,
\int_{\Omega_{z_0,R}}\,|\,\mu(z)-\nu(m(z))\,|\,d\vol_H(z)~.
\]

We have now all the building blocks to prove
\begin{thm}\label{thm:continuity_of_d^R_mu,nu}
If $\mu$ and $\nu$ are continuous from $\D$ to $\R$, then
$d^R_{\mu,\nu}(z,w)$ is a continuous function on %$\bbar{\D}\times \bbar{\D}$,
$\D\times\D$.
\end{thm}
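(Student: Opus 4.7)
The plan is to leverage the equicontinuity statement in Lemma~\ref{lem:auxiliary}, which has already reduced the hard analytic work (handling the dependence of the domain $\Omega_{z_0,R}$ on $z_0$ and of the transformation $m_{z_0,w_0,\sigma}$ on $(z_0,w_0)$) to a single uniform-in-$\sigma$ estimate. Once equicontinuity of the family $\{\Phi(\cdot,\cdot,\sigma)\}_{\sigma\in S_1}$ is in hand, continuity of the infimum is a standard soft argument.

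First, I would rewrite $d^R_{\mu,\nu}$ in the convenient form
\[
d^R_{\mu,\nu}(z_0,w_0) \;=\; \min_{\sigma \in S_1} \Phi(z_0,w_0,\sigma),
\]
which is legitimate because Lemma~\ref{lem:a_and_tet_formula_in_mobius_interpolation} parametrizes $M_{D,z_0,w_0}$ continuously by $\sigma \in S_1$, and because Lemma~\ref{lem:auxiliary} combined with compactness of $S_1$ guarantees the infimum is attained. This turns the problem into showing that a minimum of an equicontinuous family of functions of $(z_0,w_0)$ is itself continuous in $(z_0,w_0)$.

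Next, fix $(\z,\w) \in \D \times \D$ and let $\epsilon > 0$. By the equicontinuity statement in Lemma~\ref{lem:auxiliary}, there is a neighborhood $U$ of $(\z,\w)$ in $\D \times \D$ such that
\[
|\Phi(z_0,w_0,\sigma) - \Phi(\z,\w,\sigma)| < \epsilon \qquad \text{for every } (z_0,w_0) \in U \text{ and every } \sigma \in S_1.
\]
Pick $\sigma^\star \in S_1$ realizing $d^R_{\mu,\nu}(\z,\w)$ and $\sigma^{\star\star} \in S_1$ realizing $d^R_{\mu,\nu}(z_0,w_0)$ for a given $(z_0,w_0) \in U$. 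Then
\[
d^R_{\mu,\nu}(z_0,w_0) \;\leq\; \Phi(z_0,w_0,\sigma^\star) \;\leq\; \Phi(\z,\w,\sigma^\star) + \epsilon \;=\; d^R_{\mu,\nu}(\z,\w) + \epsilon,
\]
and, symmetrically,
\[
d^R_{\mu,\nu}(\z,\w) \;\leq\; \Phi(\z,\w,\sigma^{\star\star}) \;\leq\; \Phi(z_0,w_0,\sigma^{\star\star}) + \epsilon \;=\; d^R_{\mu,\nu}(z_0,w_0) + \epsilon.
\]
Hence $|d^R_{\mu,\nu}(z_0,w_0) - d^R_{\mu,\nu}(\z,\w)| \leq \epsilon$ on $U$, which proves continuity at the arbitrary point $(\z,\w)$.

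The only genuine obstacle is really the equicontinuity itself, which is delegated to Lemma~\ref{lem:auxiliary} (proved in the Appendix): one must control simultaneously the variation of the moving domain $\Omega_{z_0,R}$ and the variation of the integrand through the smoothly-but-not-uniformly $\sigma$-dependent map $m_{z_0,w_0,\sigma}$. Continuity of $\mu$ and $\nu$ (together with the fact that $\Omega_{z_0,R}$ lies in a compact subset of $\D$ whenever $z_0$ varies in a compact set, so that uniform continuity is available) is precisely what makes this estimate uniform in $\sigma$. Given that input, the present proof is the short min-is-continuous-for-equicontinuous-families argument sketched above.
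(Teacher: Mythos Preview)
Your proposal is correct and follows essentially the same argument as the paper: both invoke Lemma~\ref{lem:auxiliary} for equicontinuity of $\{\Phi(\cdot,\cdot,\sigma)\}_{\sigma\in S_1}$, use compactness of $S_1$ to replace the infimum by a minimum, and then run the standard two-sided estimate using the minimizers at the two nearby points. The only cosmetic difference is notation (the paper calls the minimizing parameters $\sigma$ and $\sigma'$ where you write $\sigma^\star$ and $\sigma^{\star\star}$).
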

\begin{proof}
%+++++++++++++++++
Pick an arbitrary point  $(z_0,w_0) \in \D \times \D$, and pick
$\eps>0$ arbitrarily small.

By Lemma \ref{lem:auxiliary}, there exists a $\delta>0$ such that,
for $|z'_0-z_0|<\delta$, $|w'_0-w_0|<\delta$, we have
\[
\left|\,\Phi(z_0,w_0,\sigma)-\Phi(z'_0,w'_0,\sigma)\,\right|\,\leq\,
\eps~,
\]
uniformly in $\sigma$. Pick now arbitrary $z'_0,w'_0$ so that
$|z_0-z'_0|,|w_0-w'_0|<\delta$.

Let $m_{z_0,w_0,\sigma}$, resp. $m_{z'_0,w'_0,\sigma'}$, be the
minimizing \Mbs transform in the definition of
$d_{\mu,\nu}^R(z_0,w_0)$, resp. $d_{\mu,\nu}^R(z'_0,w'_0)$, i.e.
\[
d^R_{\mu,\nu}(z_0,w_0)= \Phi(z_0,w_0,\sigma) \ \ \textrm{and} \ \
d^R_{\mu,\nu}(z'_0,w'_0)= \Phi(z_0,w_0,\sigma')~.
\]

It then follows that
\begin{align*}
d^R_{\mu,\nu}(z_0,w_0)&=\min_{\tau}\Phi(z_0,w_0,\tau)
\leq \Phi(z_0,w_0,\sigma')\\
&\leq \Phi(z'_0,w'_0,\sigma')+ |\Phi(z_0,w_0,\sigma') -
\Phi(z'_0,w'_0,\sigma')|= d^R_{\mu,\nu}(z'_0,w'_0)+
|\Phi(z_0,w_0,\sigma') - \Phi(z'_0,w'_0,\sigma')|\\
&\leq d^R_{\mu,\nu}(z'_0,w'_0)+ \mathop{\sup}_{\omega \in
S_1}|\Phi(z_0,w_0,\omega) - \Phi(z'_0,w'_0,\omega)| \leq
d^R_{\mu,\nu}(z'_0,w'_0)+ \eps~.
\end{align*}
Likewise $d^R_{\mu,\nu}(z'_0,w'_0) \leq d^R_{\mu,\nu}(z_0,w_0) +
\eps$, so that $\abs{d^R_{\mu,\nu}(z_0,w_0) -
d^R_{\mu,\nu}(z'_0,w'_0)}<\eps$.
\end{proof}

The function $d^R_{\mu,\nu}$ can be extended to a uniformly
continuous function on the closed disk, by using the following
lemma, proved in the Appendix.
\begin{lem}\label{lem:extension_of_d}
Let $\set{(z_k,w_k)}_{k\geq 1} \subset \D\times\D$ be a sequence
that converges, in the Euclidean norm, to some point in $(z',w') \in
\bbar{\D}\times\bbar{\D} \setminus \D\times\D$, that is
$|z_k-z'|+|w_k-w'| \too 0$, as $k \too \infty$. Then, $\lim_{k\too
\infty}d^R_{\xi,\zeta}(z_k,w_k)$ exists and depends only on the
limit point $(z',w')$.
\end{lem}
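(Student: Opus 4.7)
My plan is to reduce the analysis to integrals over a fixed domain via a change of variables that ``normalizes'' $(z_k, w_k)$ to $(0, 0)$, and then to study uniform convergence of the pulled-back integrands. By Lemma \ref{lem:a_and_tet_formula_in_mobius_interpolation}, every $m \in M_{\D, z_k, w_k}$ factors as $m = \phi_{w_k}^{-1} \circ R_\sigma \circ \phi_{z_k}$, where $\phi_p(z) = (z-p)/(1-\bar{p}z)$ sends $p$ to $0$ and $R_\sigma(z) = \sigma z$ with $\sigma \in S^1$. Setting $\tilde z = \phi_{z_k}(z)$ in the definition of $\Phi(z_k, w_k, \sigma)$, and using that $\phi_{z_k}$ is a hyperbolic isometry sending $\Omega_{z_k,R}$ onto $\Omega_{0, R}$ (Lemma \ref{lem:m(omega_z)=omega_w}), I obtain
\[
\Phi(z_k, w_k, \sigma) = \int_{\Omega_{0,R}} \left|\, f_k(\tilde z) - g_k(\sigma\tilde z)\,\right| d\vol_H(\tilde z),
\]
where $f_k := \xi \circ \phi_{z_k}^{-1}$ and $g_k := \zeta \circ \phi_{w_k}^{-1}$ are defined on $\bbar{\Omega_{0,R}}$.

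The next step is to show that $f_k$ converges uniformly on $\bbar{\Omega_{0,R}}$ to a function $f_\infty$ depending only on $z'$, with the analogous statement holding for $g_k$ and $w'$. When $z' \in \D$, one has $\phi_{z_k}^{-1} \to \phi_{z'}^{-1}$ uniformly on $\bbar{\Omega_{0,R}}$ by continuous dependence of M\"{o}bius transformations on their parameters, and continuity of $\xi$ on $\D$ then yields $f_k \to \xi \circ \phi_{z'}^{-1}$ uniformly. When $z' \in \partial \D$, a direct calculation (cf. the computation $|\phi_{z'}^{-1}(\tilde z)| = 1$ for $|z'|=1$) shows that $\phi_{z_k}^{-1}(\bbar{\Omega_{0,R}})$ shrinks in the Euclidean sense to $\{z'\}$; combined with the boundary behavior of the hyperbolic density (in the natural setting, $\xi$ extends continuously up to $\partial\D$, with $\xi|_{\partial\D} \equiv 0$ forced by the normalization $\int_\D \xi\, d\vol_H = 1$), this gives $f_k \to 0$ uniformly on $\Omega_{0, R}$. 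The same analysis applies to $g_k$, and the convergence is uniform in $\sigma\in S^1$ since $\sigma \tilde z \in \bbar{\Omega_{0,R}}$ for all $\sigma \in S^1$.

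Uniform convergence of the integrands immediately yields uniform convergence in $\sigma$:
\[
\Phi(z_k, w_k, \sigma) \,\longrightarrow\, \Phi_\infty(z', w', \sigma) := \int_{\Omega_{0,R}} \left|\,f_\infty(\tilde z) - g_\infty(\sigma\tilde z)\,\right| d\vol_H(\tilde z).
\]
Since $\Phi(z_k, w_k, \cdot)$ is equicontinuous on $S^1$ by Lemma \ref{lem:auxiliary} and $S^1$ is compact, this uniform convergence implies
\[
d^R_{\xi,\zeta}(z_k, w_k) = \min_{\sigma \in S^1} \Phi(z_k, w_k, \sigma) \,\longrightarrow\, \min_{\sigma \in S^1} \Phi_\infty(z', w', \sigma),
\]
and the right-hand side manifestly depends only on $(z', w')$. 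The main obstacle is the boundary case $z' \in \partial \D$ (and a fortiori $(z',w') \in \partial\D \times \partial\D$), where the preimages $\phi_{z_k}^{-1}(\bbar{\Omega_{0, R}})$ collapse to a single boundary point. Establishing uniform convergence of $f_k$ there requires invoking the boundary regularity of the hyperbolic density; once the appropriate continuous extension of $\xi,\,\zeta$ to $\bbar{\D}$ is in place, the three cases (interior-interior, interior-boundary, boundary-boundary) all fit into the same uniform-convergence framework, and the argument above goes through uniformly.
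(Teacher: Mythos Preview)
Your argument is essentially the same strategy as the paper's: pull everything back to the fixed domain $\Omega_{0,R}$ via M\"{o}bius transformations, and then use that the hyperbolic conformal factor tends to zero at $\partial\D$ to control the boundary case. The paper organizes this a bit differently --- it uses the symmetry $d^R_{\xi,\zeta}(z_k,w_k)=d^R_{\zeta,\xi}(w_k,z_k)$ to place the boundary limit point in the ``$\xi$ slot'', shows via an auxiliary lemma that $|m\circ m_k(w)|\to 1$ uniformly, and then peels off the $\xi$-term to reduce to a single $\zeta$-integral --- whereas you treat $f_k$ and $g_k$ symmetrically and pass to the min via uniform convergence in $\sigma$. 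Your framing is slightly cleaner and handles all boundary configurations at once; the paper's is more hands-on but reaches the same endpoint.

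One point needs fixing: your justification that $\xi|_{\partial\D}\equiv 0$ is ``forced by the normalization $\int_\D\xi\,d\vol_H=1$'' is not valid. Integrability against the hyperbolic volume does not by itself force a continuous function to vanish at the boundary (one can have integrable bumps of fixed height accumulating at $\partial\D$). The paper instead uses the structural fact built into the setup, namely $\xi(z)=\xi^H(z)=(1-|z|^2)^2\,\widetilde\xi(z)$ with the Euclidean density $\widetilde\xi$ bounded on $\D$; this immediately gives $\xi(z)\to 0$ uniformly as $|z|\to 1$, which is exactly what your uniform-convergence argument for $f_k$ needs. Replace the integrability remark by this, and your proof goes through. (Also, the appeal to Lemma~\ref{lem:auxiliary} in your last step is unnecessary: uniform convergence of $\Phi(z_k,w_k,\cdot)$ on the compact set $S^1$ already yields convergence of the minima, without invoking equicontinuity.)
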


% ----------------------------------------------------------------
\subsection{Incorporating $d^R_{\mu,\nu}(z,w)$ into the transportation framework}
The next step in constructing the distance operator between surfaces
is to incorporate the distance $d^R_{\mu,\nu}(z,w)$ defined in the
previous subsection into the (generalized) Kantorovich
transportation model:
\begin{equation}
T^R_d(\mu,\nu)=\inf_{\pi\in \Pi(\mu,\nu)}\int_{\D\times
\D}d^R_{\mu,\nu}(z,w)d\pi(z,w).
\label{e:generalized_Kantorovich_transportation}
\end{equation}
The main result is that this procedure (under some extra conditions)
furnishes a \emph{metric} between (disk-type) surfaces.

\begin{thm}%\label{thm:continuity_of_d^R_mu,nu}
There exists $\pi^* \in \Pi(\mu,\nu)$ such that
$$\int_{\D\times \D}d^R_{\mu,\nu}(z,w)d\pi^*(z,w)=\inf_{\pi\in \Pi(\mu,\nu)}\int_{\D\times \D}d^R_{\mu,\nu}(z,w)d\pi(z,w).$$
\end{thm}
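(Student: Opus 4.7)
My plan is to prove the theorem by the standard direct method of the calculus of variations: exhibit the feasible set $\Pi(\mu,\nu)$ as a compact subset of a suitable topological space, show the functional $\pi \mapsto \int d^R_{\mu,\nu}\,d\pi$ is continuous, and then invoke Weierstrass.

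First, I would promote the setting from the open disk to the closed disk. The cost $d^R_{\mu,\nu}$ is defined on $\D\times \D$, but by Theorem \ref{thm:continuity_of_d^R_mu,nu} it is continuous there, and by Lemma \ref{lem:extension_of_d} it admits a continuous extension to $\overline{\D}\times\overline{\D}$. I would write $\bar d$ for this extension. Since $\overline{\D}\times\overline{\D}$ is compact and $\bar d$ is continuous, $\bar d$ is bounded. Viewing $\mu$ and $\nu$ as Borel probability measures on $\overline{\D}$ (extending by zero on $\partial \D$), the set $\Pi(\mu,\nu)$ can be regarded as a set of probability measures on $\overline{\D}\times\overline{\D}$ with the prescribed marginals, and the integrand $d^R_{\mu,\nu}$ in (\ref{e:generalized_Kantorovich_transportation}) may be replaced by $\bar d$ without changing the value of the integral (the boundary is $\mu$- and $\nu$-null, hence $\pi$-null for every $\pi\in\Pi(\mu,\nu)$).

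Next I would verify the two hypotheses of the direct method. (i) \emph{Compactness of $\Pi(\mu,\nu)$ in the weak-$*$ topology.} Because $\overline{\D}\times\overline{\D}$ is compact, every set of probability measures on it is tight, hence by Prokhorov's theorem relatively weak-$*$ compact. Closedness of $\Pi(\mu,\nu)$ follows from the fact that the marginal conditions $\int f(z)\,d\pi(z,w)=\int f\,d\mu$ and $\int g(w)\,d\pi(z,w)=\int g\,d\nu$ for $f,g\in C(\overline{\D})$ are preserved under weak-$*$ limits. (ii) \emph{Weak-$*$ continuity of the functional.} Since $\bar d\in C(\overline{\D}\times\overline{\D})$, the map $\pi \mapsto \int \bar d\,d\pi$ is by definition continuous in the weak-$*$ topology on $P(\overline{\D}\times\overline{\D})$, and hence on $\Pi(\mu,\nu)$.

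Combining these two ingredients, the Weierstrass theorem yields the existence of a minimizer $\pi^*\in \Pi(\mu,\nu)$: take any minimizing sequence $\pi_n\in\Pi(\mu,\nu)$, extract a weak-$*$ convergent subsequence with limit $\pi^*\in\Pi(\mu,\nu)$, and pass to the limit in $\int \bar d\,d\pi_n$ to conclude that $\pi^*$ realizes the infimum.

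The only substantive step here is point (i)--specifically ensuring the cost is continuous on a compact space so that $\Pi(\mu,\nu)$ is weak-$*$ compact; this is precisely why Theorem \ref{thm:continuity_of_d^R_mu,nu} and Lemma \ref{lem:extension_of_d} were established just before the statement. The rest of the argument is textbook Kantorovich existence.
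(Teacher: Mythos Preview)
Your proposal is correct and follows essentially the same route as the paper: extend $d^R_{\mu,\nu}$ continuously to the compact set $\overline{\D}\times\overline{\D}$ via Lemma~\ref{lem:extension_of_d}, regard $\mu,\nu$ as measures on $\overline{\D}$ (with null boundary), use weak-$*$ compactness of $\Pi(\mu,\nu)$ together with weak-$*$ continuity of $\pi\mapsto\int \bar d\,d\pi$, and conclude by Weierstrass. The only cosmetic difference is that the paper invokes Riesz--Markov and Banach--Alaoglu where you invoke Prokhorov, and the paper spells out the inequality $\overline{\pi}(\D\times\D)\ge 1-\overline{\mu}(\partial\D)-\overline{\nu}(\partial\D)=1$ where you simply note that the boundary is $\pi$-null; these are interchangeable.
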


\begin{proof}
%+++++++++++++++++
%existence of minimizer
{\ingg{%This proof follows from a few basic observations.
With a slight abuse of notation, we denote by $\mu$ the probability measure on $\D$
that is absolutely continuous with respect to the hyperbolic measure on $\D$, with
density function equal to the continous function $\mu$ on $\D$. (See subsection \ref{subs:unif}.)
We now define a probability measure $\overline{\mu}$  on $\overline{\D}\,$
by setting $\overline{\mu}(A)=\mu(A\cap\D)$, for arbitrary Borel sets $A \subset \overline{\D}\,$;
$\overline{\nu}$ is defined analogously.
By the Riesz-Markov theorem, the space
of probability measures $\mathcal{P}(\overline{\D} \times \overline{\D})$ can be viewed as
a (closed) subset of the unit ball in $C(\overline{\D}\times\overline{\D})^*$. As such, both
$\mathcal{P}(\overline{\D} \times \overline{\D})$ and its closed subset
$\Pi(\overline{\mu},\overline{\nu})$ are weak$*$-compact,
by the Banach-Alaoglu theorem.
Note that for each
$\overline{\pi} \in \Pi(\overline{\mu},\overline{\nu})$,
we have
\[
\overline{\pi}(\D\times\D)\geq\overline{\pi}(\overline{\D}\times\overline{\D})-\left(\overline{\pi}(\overline{\D}\times[\overline{\D}\setminus\D])+\overline{\pi}([\overline{\D}\setminus\D]\times\overline{\D})\right)\,=\,
1-\overline{\nu}(\overline{\D}\setminus\D)-\overline{\mu}(\overline{\D}\setminus\D)\,=\,1\,,
\]
and thus $\overline{\pi}(\D\times\D)=1$; the restriction $\pi$ of each such $\overline{\pi}$ to the Borel sets contained in $\D\times \D$
is thus a probability measure on $\D\times\D$.\\
Since (the extension to $\overline{\D}\times\overline{\D}$ of) $d_{\mu,\nu}(\cdot,\cdot)$
is an element of $C(\overline{\D}\times\overline{\D})$ by Lemma \ref{lem:extension_of_d},
it follows that the evaluation
$\,\overline{\pi} \mapsto \overline{\pi}(d_{\mu,\nu})\,=\,\int_{\overline{\D}\times\overline{\D}}\,d_{\mu,\nu}(z,w)\,d\overline{\pi}(z,w)\,=\,\int_{\D\times\D}\,d_{\mu,\nu}(z,w)\,d{\pi}(z,w)\,$
%is a continuous map on a compact set, which thus achieves its infimum
is weak$*$-continuous on the weak$*$-compact set $\Pi(\overline{\mu},\overline{\nu})$; it
thus achieves its infimum
in an element $\overline{\pi^*}$ of that set. As observed above,  $\pi^*$, the restriction of $\overline{\pi^*}$ to the Borel sets contained in $\D\times \D$,
is a probability measure on $\D\times\D$, and an element of $\Pi({\mu},{\nu})$; this is the desired minimizer. }}

\end{proof}

Under rather mild conditions, the ``standard'' Kantorovich
transportation (\ref{e:basic_Kantorovich_transporation}) on a metric
spaces $(X,d)$  defines a metric on the space of probability
measures on $X$ . We will prove that our generalization defines a
distance metric as well. More precisely, we shall prove first that
$$
\d^R(\M,\N)=T^R_d(\mu,\nu)
$$
defines a semi-metric in the set of all disk-type surfaces. We shall
restrict ourselves to surfaces that are sufficiently smooth to allow
uniformization, so that they can be globally and conformally
parametrized over the hyperbolic disk. Under some extra
assumptions, we will prove that $\d^R$ is a metric, in the sense
that $\d^R(\M,\N)=0$ implies that $\M$ and $\N$ are isometric.

For the semi-metric part we will again adapt a proof given in
\cite{Villani:2003} to our framework. In particular, we shall make
use of the following ``gluing lemma'':
\begin{lem}
\label{lem:gluing_lemma} Let $\mu_1,\mu_2,\mu_3$ be three
probability measures on $\D$, and let $\pi_{12} \in
\Pi(\mu_1,\mu_2)$, $\pi_{23} \in \Pi(\mu_2,\mu_3)$ be two
transportation plans. Then there exists a probability measure $\pi$
on $\D \times \D \times \D$ that has $\pi_{12},\pi_{23}$ as
marginals, that is $\int_{z_3\in\D} d\pi(z_1,z_2,z_3) =
d\pi_{12}(z_1,z_2) $, and $\int_{z_1\in\D} d\pi(z_1,z_2,z_3) =
d\pi_{23}(z_2,z_3)$.
\end{lem}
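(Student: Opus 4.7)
The plan is to build $\pi$ by ``gluing'' $\pi_{12}$ and $\pi_{23}$ along their common marginal $\mu_2$, using the disintegration theorem for measures on Polish spaces. Since $\D$ (and hence $\D\times\D$) is a Polish space, each of the probability measures $\pi_{12}$ and $\pi_{23}$ admits a disintegration with respect to its marginal on the common $\mu_2$-slot.

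First I would invoke the disintegration theorem to write
\[
d\pi_{12}(z_1,z_2)=K_{12}(z_2,dz_1)\,d\mu_2(z_2)\,, \qquad d\pi_{23}(z_2,z_3)=K_{23}(z_2,dz_3)\,d\mu_2(z_2)\,,
\]
where $K_{12}(z_2,\cdot)$ and $K_{23}(z_2,\cdot)$ are probability kernels on $\D$ defined for $\mu_2$-a.e.\ $z_2$. Next, I would define $\pi$ on $\D\times\D\times\D$ by prescribing, for any bounded Borel function $f$,
\[
\int_{\D^3} f(z_1,z_2,z_3)\,d\pi(z_1,z_2,z_3) := \int_{\D} \left(\int_{\D}\int_{\D} f(z_1,z_2,z_3)\,K_{12}(z_2,dz_1)\,K_{23}(z_2,dz_3)\right) d\mu_2(z_2)\,.
\]
Taking $f\equiv 1$ shows $\pi$ is a probability measure, and taking $f$ depending only on $(z_1,z_2)$, resp.\ $(z_2,z_3)$, immediately recovers $\pi_{12}$, resp.\ $\pi_{23}$, as the claimed marginals; in particular the marginal identities asserted in the Lemma follow by the Fubini-type interpretation of the iterated integrals.

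The only non-routine point is the application of the disintegration theorem: it requires the underlying spaces to be sufficiently nice (standard Borel / Polish), which is the case here since $\D$ with its Euclidean topology is Polish. Once the kernels $K_{12}$ and $K_{23}$ are in hand, the construction is a product of kernels against $\mu_2$ and the verification of marginals is a direct computation. If one prefers to avoid disintegration, an equivalent route is to note that $\pi_{12}$ and $\pi_{23}$ have a common $\mu_2$-marginal and invoke the standard gluing construction (as in Villani, \emph{Topics in Optimal Transportation}, Lemma 7.6), which is precisely the disintegration-based product described above.
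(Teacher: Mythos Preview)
Your proof is correct and is exactly the standard disintegration-based gluing argument. The paper itself does not give a proof of this lemma; it merely states it and refers the reader to Villani~\cite{Villani:2003} (page~208), where precisely the construction you describe appears, so your proposal is in full agreement with the intended argument.
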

This lemma will be used in the proof of the following:

\begin{thm}
For two disk-type surfaces $\M=(\D,\mu)$, $\N=(\D,\nu)$, let
$\d^R(\M,\N)$ be defined by
$$
\d^R(\M,\N)=T^R_d(\mu,\nu).
$$
Then $\d^R$ defines a semi-metric on the space of disk-type
surfaces.
\end{thm}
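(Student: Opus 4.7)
The plan is to verify the four semi-metric axioms---non-negativity, symmetry, reflexivity, and the triangle inequality---for $\d^R(\M,\N)=T^R_d(\mu,\nu)$, by lifting the corresponding pointwise properties of $d^R_{\mu,\nu}$ that were collected in Theorem \ref{thm:properties_of_d}. Non-negativity is immediate, since $d^R_{\mu,\nu}\ge 0$ by property (3) and $\pi$ is a non-negative measure.

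For symmetry, I would use the involution $s:\D\times\D\to\D\times\D$, $s(z,w)=(w,z)$, which pushes forward any $\pi\in\Pi(\mu,\nu)$ to $s_{*}\pi\in\Pi(\nu,\mu)$ and gives a bijection between $\Pi(\mu,\nu)$ and $\Pi(\nu,\mu)$. Combining the change-of-variables formula with the pointwise symmetry $d^R_{\mu,\nu}(z,w)=d^R_{\nu,\mu}(w,z)$ from property (2), one has $\int d^R_{\mu,\nu}(z,w)\,d\pi(z,w)=\int d^R_{\nu,\mu}(w,z)\,d(s_*\pi)(w,z)$, so the infima over $\Pi(\mu,\nu)$ and $\Pi(\nu,\mu)$ agree. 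For reflexivity, I would take $\pi$ supported on the diagonal, defined by $\int f\,d\pi=\int f(z,z)\,d\mu(z)$; this lies in $\Pi(\mu,\mu)$, and property (5) specialized to $m=\mathrm{id}$ with $\nu=\mu$ yields $d^R_{\mu,\mu}(z,z)=0$, so the integrand vanishes identically and $\d^R(\M,\M)=0$.

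The triangle inequality is the main content. Given surfaces $\M_i=(\D,\mu_i)$ for $i=1,2,3$, use the preceding existence theorem to choose optimal transport plans $\pi_{12}\in\Pi(\mu_1,\mu_2)$ and $\pi_{23}\in\Pi(\mu_2,\mu_3)$ realizing $\d^R(\M_1,\M_2)$ and $\d^R(\M_2,\M_3)$. By Lemma \ref{lem:gluing_lemma}, glue these into a probability measure $\pi$ on $\D\times\D\times\D$ with $\pi_{12}$ and $\pi_{23}$ as its $(1,2)$- and $(2,3)$-marginals. Let $\pi_{13}$ denote its $(1,3)$-marginal; by construction $\pi_{13}\in\Pi(\mu_1,\mu_3)$, so
\[
\d^R(\M_1,\M_3)\;\le\;\int_{\D\times\D}d^R_{\mu_1,\mu_3}(z_1,z_3)\,d\pi_{13}(z_1,z_3)\;=\;\int_{\D^3}d^R_{\mu_1,\mu_3}(z_1,z_3)\,d\pi(z_1,z_2,z_3).
\]
Applying the pointwise triangle inequality of property (6) under the integral, splitting, and integrating out the unused coordinate in each piece, one recovers $\int d^R_{\mu_1,\mu_2}\,d\pi_{12}+\int d^R_{\mu_2,\mu_3}\,d\pi_{23}=\d^R(\M_1,\M_2)+\d^R(\M_2,\M_3)$.

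The hard part is the triangle inequality, not because of any analytic obstruction but because in the generalized Kantorovich setting the cost $d^R_{\mu,\nu}$ depends on the two measures being compared, so one cannot quote the standard Wasserstein proof as a black box. The argument works only because Theorem \ref{thm:properties_of_d}(6) was tailored precisely to this measure-dependent setting: it states the pointwise triangle inequality with the correct labelling of measures so that gluing over the intermediate surface $\M_2$ with plans $\pi_{12},\pi_{23}$ is consistent. Once property (6) is granted, the standard gluing-plus-marginalization argument carries through verbatim.
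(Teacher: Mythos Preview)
Your argument is essentially the paper's, with one genuine gap in the reflexivity step. The paper does not merely show $T^R_d(\mu,\mu)=0$; it proves the stronger statement $T^R_d(\mu,m_*\mu)=0$ for \emph{every} $m\in\Md$, using the plan supported on the graph of $m$ together with property~(5) in its full form $d^R_{\mu,m_*\mu}(z,m(z))=0$. This is not cosmetic: the theorem asserts that $\d^R$ is a semi-metric on the space of \emph{surfaces}, and a disk-type surface $\M$ does not come with a canonical conformal factor---the uniformizing map is determined only up to post-composition with an element of $\Md$, so $(\D,\mu)$ and $(\D,m_*\mu)$ represent the same $\M$. Hence reflexivity on surfaces requires $T^R_d(\mu,m_*\mu)=0$, and (together with the triangle inequality and symmetry) this is also what guarantees that $\d^R(\M,\N):=T^R_d(\mu,\nu)$ is well-defined independently of which conformal factors one picks for $\M$ and $\N$. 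Your specialization to $m=\mathrm{id}$ does not cover this, and your proposal nowhere addresses well-definedness.

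The remaining differences are minor. For the triangle inequality you invoke the existence theorem to select \emph{optimal} $\pi_{12},\pi_{23}$, whereas the paper works with arbitrary plans and passes to the infimum at the end; both are valid, and yours is a slight shortcut once the existence result is available.
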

\begin{proof}

The symmetry of $d^R_{\mu,\nu}$ implies symmetry for $T^R_d$, by the
following argument:
\begin{align*}
T^R_d(\mu,\nu) &=
\mathop{\inf}_{\pi \in \Pi(\mu,\nu)}\int_{\D \times \D}d^R_{\mu,\nu}(z,w)d\pi(z,w) = \mathop{\inf}_{\pi \in \Pi(\mu,\nu)}\int_{\D \times \D}d^R_{\nu,\mu}(w,z)d\pi(z,w) \\
&=
\mathop{\inf}_{\pi \in \Pi(\mu,\nu)}\int_{\D \times \D}d^R_{\nu,\mu}(w,z)d\widetilde{\pi}(w,z),  ~~~~~~~\mbox{ where we have set }~\widetilde{\pi}(w,z)=\pi(z,w)\\
&= T^R_d(\nu,\mu)~.  ~~~~~~~~~(\mbox{ use that }\pi\in\Pi(\mu,\nu)
\Leftrightarrow \widetilde{\pi}\in\Pi(\nu,\mu))
\end{align*}

%+++++++++++++++++

The non-negativity of $d^R_{\mu,\nu}(\cdot,\cdot)$ automatically
implies $T^R_d(\mu,\nu) \geq 0$.

%+++++++++++++++++

Next we show that, for any \Mbs transformation $m$,
$T^R_d(\mu,m_*\mu)=0$. To see this, pick the transportation plan
$\pi \in \Pi(\mu,m_*\mu)$ defined by
$$
\int_{\D\times \D} f(z,w) d\pi(z,w) = \int_{\D} f(z,m(z))
\mu(z)\,d\vol_H(z).
$$
On the one hand $\pi \in \Pi(\mu,m_*\mu)$, since
$$\int_{A \times \D} d\pi(z,w) = \int_{A} \mu(z)d\vol_H(z),$$
and
\begin{align*}
\int_{\D\times B} d\pi(z,w) &=
\int_{\D\times \D} \chi_B(w) d\pi(z,w) \\
&=\int_{\D} \chi_B(m(z)) \mu(z)d\vol_H(z) = \int_{\D} \chi_B(w)
\mu_*(w)d\vol_H(w),
\end{align*}
where we used the change of variables $w=m(z)$ in the last step.
Furthermore, $\pi(z,w)$ is concentrated on the graph of $m$, i.e. on
$\set{(z,m(z)) \ ; \ z\in \D} \subset \D \times \D$. Since
$d^R_{\mu,m_*\mu}(z,m(z)) = 0$ for all $z \in \D$ we obtain
therefore $T_d(\mu,m_*\mu) \leq \int_{\D\times \D}
d^R_{\mu,m_*\mu}(z,w)d\pi(z,w)  = 0$.

%+++++++++++++++++

Finally, we prove the triangle inequality $T^R_d(\mu_1,\mu_3) \leq
T^R_d(\mu_1,\mu_2) + T^R_d(\mu_2,\mu_3)$ . To this end we follow the
argument in the proof given in \cite{Villani:2003} (page 208). This
is where we invoke the gluing Lemma stated above.

We start by picking  arbitrary transportation plans $\pi_{12} \in
\Pi(\mu_1,\mu_2)$ and $\pi_{23} \in \Pi(\mu_2,\mu_3)$. By Lemma
\ref{lem:gluing_lemma} there exists a probability measure $\pi$ on
$\D\times \D \times \D$ with marginals $\pi_{12}$ and $\pi_{23}$.
Denote by $\pi_{13}$ its third marginal, that is
$$\int_{z_2\in \D}d\pi(z_1,z_2,z_3) = d\pi_{13}(z_1,z_3).$$
Then
\begin{align*}
T^R_d(\mu_1,\mu_3) &\leq \int_{\D \times \D} d^R_{\mu_1,\mu_3}(z_1,z_3)d\pi_{13}(z_1,z_3)  = \int_{\D \times \D \times \D} d^R_{\mu_1,\mu_3}(z_1,z_3)d\pi(z_1,z_2,z_3) \\
&\leq \int_{\D \times \D \times \D} \Big( d^R_{\mu_1,\mu_2}(z_1,z_2) + d^R_{\mu_2,\mu_3}(z_2,z_3) \Big )d\pi(z_1,z_2,z_3) \\
&\leq \int_{\D \times \D \times \D} d^R_{\mu_1,\mu_2}(z_1,z_2)
d\pi(z_1,z_2,z_3) +
\int_{\D \times \D \times \D} d^R_{\mu_2,\mu_3}(z_2,z_3) d\pi(z_1,z_2,z_3) \\
&\leq \int_{\D \times \D } d^R_{\mu_1,\mu_2}(z_1,z_2)
d\pi_{12}(z_1,z_2) + \int_{\D \times \D } d^R_{\mu_2,\mu_3}(z_2,z_3)
d\pi_{23}(z_2,z_3),
\end{align*}
where we used the triangle-inequality for $d^R_{\mu,\nu}$ listed in
(Theorem \ref{thm:properties_of_d}). Since we can choose $\pi_{12}$
and $\pi_{23}$ to achieve arbitrary close values to the infimum in
eq.~(\ref{e:generalized_Kantorovich_transportation}) the triangle
inequality follows.
\end{proof}

To qualify as a metric rather than a semi-metric, $\d^R$ (or
$T^R_d$) should be able to distinguish from each other any two
surfaces (or measures) that are not ``identical'', that is
isometric. To prove that they can do so, we need an extra
assumption: we shall require that the surfaces we consider have no
self-isometries. More precisely, we require that each surface $\M$
that we consider satisfies the following definition:

\begin{defn}
A {\ingg{disk-type surface $\M$ is said to be {\em singly $\varrho\mbox{-}_{\mbox{\tiny{H}}}\mbox{fittable}$}
(where
$\varrho \in \R,$ $\varrho > 0$) if, for all $R > \varrho$,
all $z_0\in\D$, and all conformal factors obtained in uniformizations of the disk $\M\,$ there is no
other M\"{o}bius transformation $m$ other
than the identity for which
$$
\int_{\Omega_{z_0,R}} \,
|\mu(z)-\mu(m(z))|\,d\vol_H(z)=0\,.
$$
}}
\end{defn}

\begin{rem}
This definition can also be read as follows: $\M$ is singly
$\varrho\mbox{-}_{\mbox{\tiny{H}}}\mbox{fittable}$ if and only if, for all $R>\varrho$, any two
conformal factors $\mu_1$ and $\mu_2$ for $\M$ satisfy:
\begin{enumerate}
\item
For all $z\in \D$ there exists a unique
minimum to the function $w \mapsto d^R_{\mu_1,\mu_2}(z,w)$.
\item
For all pairs $(z,w)\in \D \times \D$ that achieve this minimum
there exists a unique M\"{o}bius transformation for which the
integral in {\rm(\ref{e:d_mu,nu(z,w)_def})} vanishes (with $\mu_1$
in the role of $\mu$, and $\mu_2$ in that of $\nu$).
\end{enumerate}
Note that in order to ensure that the conditions in the definition hold for all conformal factors, it is sufficient to require that it holds for the conformal factor associated to just one uniformization.
\end{rem}
Essentially, this definition requires that,
from some sufficiently large (hyperbolic) scale onwards, there are
no isometric pieces within $(\D,\mu)$ (or $(\D,\nu)$).

We are ready to  prove the last remaining part of
the main result of this subsection.
We start with a lemma.
\begin{lem}\label{lem:in_every_disk_z_w_d(z,w)=0}
Let $\pi \in \Pi(\mu,\nu)$ be such that $\int_{\D \times
\D}\,d^R_{\mu,\nu}(z,w)\,d\pi(z,w)=0$. Then, for all $z_0 \in \D$
and $\delta >0$, there exists at least one point $z \in
\Omega_{z_0,\delta}$ such that $d^R_{\mu,\nu}(z,w)=0$ for some $w
\in \D$.
\end{lem}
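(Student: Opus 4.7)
The plan is to exploit the continuity and non-negativity of $d^R_{\mu,\nu}$, together with the fact that $\pi$ has first marginal $\mu$, to force the existence of a zero of $d^R_{\mu,\nu}(\cdot,w)$ inside any prescribed hyperbolic disk. The key observation is that the vanishing of the $\pi$-integral of a non-negative continuous function must concentrate $\pi$ on that function's zero set, and then the positivity of $\mu$ on $\D$ turns this into a statement about first coordinates.

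More concretely, I would first introduce the set
\[
Z \;=\; \bigl\{(z,w) \in \D\times\D \,:\, d^R_{\mu,\nu}(z,w) = 0\bigr\}\,,
\]
and note that by Theorem \ref{thm:continuity_of_d^R_mu,nu} together with property (3) of Theorem \ref{thm:properties_of_d}, $d^R_{\mu,\nu}$ is continuous and non-negative on $\D\times\D$, so $Z$ is closed and $\pi(Z^c)=0$ (because $\int d^R_{\mu,\nu}\,d\pi = 0$ and the integrand is $\geq 0$). I would then proceed by contradiction: suppose there exist $z_0 \in \D$ and $\delta>0$ such that no point $z\in\Omega_{z_0,\delta}$ satisfies $d^R_{\mu,\nu}(z,w)=0$ for any $w\in\D$. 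Then $(\Omega_{z_0,\delta}\times\D)\cap Z=\emptyset$, hence
\[
\pi\bigl(\Omega_{z_0,\delta}\times\D\bigr)\;\leq\;\pi(Z^c)\;=\;0\,.
\]

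On the other hand, since $\pi\in\Pi(\mu,\nu)$, the first marginal identity gives $\pi(\Omega_{z_0,\delta}\times\D)=\mu(\Omega_{z_0,\delta})$. Because $\mu$ is obtained from uniformization of a Riemannian surface and is therefore a strictly positive continuous density on $\D$, and because $\Omega_{z_0,\delta}$ contains a Euclidean-open neighborhood of $z_0$, we get $\mu(\Omega_{z_0,\delta})>0$. This contradicts the previous inequality and establishes the lemma.

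The argument is really a disintegration/support argument, and the only subtlety to be careful about is that $\mu$ indeed charges every (hyperbolic or Euclidean) neighborhood of any interior point of $\D$ with positive measure; this is where we use that $\mu=\mu^H$ is strictly positive and continuous on $\D$ — a feature that follows automatically from the uniformization setup of Subsection \ref{subs:unif}. I do not expect any genuine obstacle beyond making this positivity explicit.
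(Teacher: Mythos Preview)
Your proposal is correct and is essentially the same contradiction argument as the paper's: both use that the first marginal gives $\pi(\Omega_{z_0,\delta}\times\D)=\mu(\Omega_{z_0,\delta})>0$, which is incompatible with the vanishing integral of the non-negative integrand $d^R_{\mu,\nu}$ if no zero of $d^R_{\mu,\nu}(\cdot,w)$ lies over $\Omega_{z_0,\delta}$. Your version packages this via the zero set $Z$ and the support inclusion $\mathrm{supp}\,\pi\subset Z$, whereas the paper argues directly that a strictly positive integrand on a set of positive $\pi$-measure forces a positive integral; the continuity of $d^R_{\mu,\nu}$ you invoke is harmless but not actually needed for the step $\pi(Z^c)=0$, which follows from non-negativity alone.
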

\begin{proof}
By contradiction: assume that there exists a disk
$\Omega_{z_0,\delta}$ such that $d^R_{\mu,\nu}(z,w) >0$ for all
$z\in \Omega_{z_0,\delta}$ and all $w \in \D$. Since
$$
\int_{\Omega(z_0,\delta) \times \D}\,d\pi(z,w) =
\int_{\Omega(z_0,\delta)}\mu(z)\, d\vol_H(z)>0~,
$$
the set $\Omega(z_0,\delta) \times \D$ contains some of the support
of $\pi$. It follows that
$$
\int_{\Omega(z_0,\delta)\times \D} d^R_{\mu,\nu}(z,w)\, d\pi(z,w)>0~,
$$
which contradicts
$$
 \int_{\Omega(z_0,\delta) \times \D}d^R_{\mu,\nu}(z,w)d\pi(z,w)
\le \int_{\D \times \D}d^R_{\mu,\nu}(z,w)d\pi(z,w) = 0~.
$$
\end{proof}

\begin{thm}\label{thm:identity_of_indiscernibles}
Suppose that $\M$ and $\N$ are two surfaces that are singly
$\varrho\mbox{-}_{\mbox{\tiny{H}}}$fittable. If $\d^R(\M,\N)=0$ for some $R > \varrho$, then
there exists a \Mbs transformation $m \in \Md$ that is a global
isometry between $\M=(\D,\mu)$ and $\N=(\D,\nu)$ (where $\mu$ and
$\nu$ are conformal factors of $\M$ and $\N$, respectively).
\end{thm}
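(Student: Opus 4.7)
The idea is to use the hypothesis $\d^R(\M,\N)=T^R_d(\mu,\nu)=0$ to produce a dense set of points where $d^R_{\mu,\nu}$ vanishes, extract from each such point a local M\"{o}bius isometry realized on a hyperbolic disk of radius $R$, and then glue these local M\"{o}bius maps into one global map using the singly $\varrho$-fittable assumption.

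\emph{Step 1 (dense set of zeros).} By the existence theorem established just before, there is an optimal transportation plan $\pi^* \in \Pi(\mu,\nu)$ with $\int d^R_{\mu,\nu}\,d\pi^*=0$. Since $d^R_{\mu,\nu}\geq 0$, Lemma \ref{lem:in_every_disk_z_w_d(z,w)=0} applies and yields that the set
\[
U := \{\, z \in \D : \exists\,w\in\D \text{ with } d^R_{\mu,\nu}(z,w)=0\,\}
\]
has non-empty intersection with every hyperbolic disk $\Omega_{z_0,\delta}$; in particular $U$ is dense in $\D$.

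\emph{Step 2 (local M\"{o}bius isometry at each point of $U$).} For each $z \in U$, the infimum in (\ref{e:d_mu,nu(z,w)_def}) is attained, since by Lemma \ref{lem:a_and_tet_formula_in_mobius_interpolation} it is taken over $\sigma\in S^1$ and by Lemma \ref{lem:auxiliary} the integrand is continuous in $\sigma$. Thus there exists $m_z \in \Md$ with $m_z(z)=w$ and $\int_{\Omega_{z,R}}|\mu-m_z^{*}\nu|\,d\vol_H=0$; continuity of $\mu$ and $\nu$ then forces $\mu(\zeta)=\nu(m_z(\zeta))$ for every $\zeta\in\Omega_{z,R}$. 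The map $m_z$ is moreover unique: if $m_z'$ were another such map, then $m_z'\circ m_z^{-1}$ would be a non-trivial M\"{o}bius self-identification of $\nu$ on $m_z(\Omega_{z,R})=\Omega_{m_z(z),R}$ (Lemma \ref{lem:m(omega_z)=omega_w}), a hyperbolic disk of radius $R>\varrho$, contradicting the singly $\varrho$-fittable property of $\N$.

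\emph{Step 3 (patching --- the main obstacle).} The crucial point is to show that $m_z$ does not depend on $z$. I would establish the local statement: whenever $z_1,z_2\in U$ satisfy $d_H(z_1,z_2)<R-\varrho$ (hyperbolic distance), the triangle inequality gives $\Omega_{z_2,\,R-d_H(z_1,z_2)}\subset\Omega_{z_1,R}\cap\Omega_{z_2,R}$, so on this common disk (of radius strictly larger than $\varrho$) both $m_{z_1}^{*}\nu$ and $m_{z_2}^{*}\nu$ coincide with $\mu$; the same uniqueness argument as in Step 2, applied on this disk, then forces $m_{z_1}=m_{z_2}$. The global conclusion $m_{z_1}=m_{z_2}$ for arbitrary $z_1,z_2\in U$ follows from the connectedness of $\D$ together with the density of $U$: join $z_1$ to $z_2$ by a continuous path in $\D$, cover the path by finitely many hyperbolic balls of radius $(R-\varrho)/3$, and use the density of $U$ to pick a representative in each ball. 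Consecutive representatives lie within hyperbolic distance $R-\varrho$, so the local step propagates equality along the chain.

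\emph{Step 4 (conclusion).} Denote by $m$ the common value of all $m_z$, $z\in U$. Then $\mu(\zeta)=\nu(m(\zeta))$ on $\bigcup_{z\in U}\Omega_{z,R}$, and density of $U$ in $\D$ together with the fact that each $\Omega_{z,R}$ is open makes this union all of $\D$. In view of the push-forward formula (\ref{e:push_forward_of_metric_density}), this identity $\mu=m^{*}\nu$ says exactly that $m\in\Md$ is a global isometry between $\M=(\D,\mu)$ and $\N=(\D,\nu)$, as required.
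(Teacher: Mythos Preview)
Your proof is correct and relies on the same two ingredients as the paper: Lemma~\ref{lem:in_every_disk_z_w_d(z,w)=0} to produce a dense set of ``good'' points, and the singly $\varrho$-fittable hypothesis to force agreement of the local M\"{o}bius isometries on overlapping hyperbolic disks of radius exceeding $\varrho$.

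The organizational scheme is a bit different, and worth noting. The paper fixes a single base point $z_0\in U$ with its associated $m_0$, sets $\rho^*=\sup\{\rho:\mu=m_0^*\nu \text{ on }\Omega_{z_0,\rho}\}$, and derives a contradiction from $\rho^*<\infty$ by extending the region of agreement past $\rho^*$ using nearby points of $U$ along the boundary circle. You instead assign to every $z\in U$ its (unique) $m_z$, show directly that $z\mapsto m_z$ is locally constant on $U$ whenever two points are within hyperbolic distance $R-\varrho$, and then invoke connectedness of $\D$ plus density of $U$ via a chain argument. Both arguments are standard ``local rigidity propagates to global'' schemes and are essentially interchangeable here; yours is perhaps the more transparent packaging, while the paper's radial-growth version makes the quantitative role of the parameter $\eps=(R-\varrho)/2$ a bit more explicit.
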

\begin{proof}
When $\d^R(\M,\N)=0$, there exists (see \cite{Villani:2003}) $\pi
\in \Pi(\mu,\nu)$ such that
$$
\int_{\D \times \D}d^R_{\mu,\nu}(z,w)d\pi(z,w)=0.
$$

Next, pick an arbitrary point $z_0 \in \D$ such that, for some $w_0
\in \D$, we have $d^R_{\mu,\nu}(z_0,w_0)=0$. (The existence of such
a pair is guaranteed by Lemma \ref{lem:in_every_disk_z_w_d(z,w)=0}.)
This implies that there exists a unique M\"{o}bius transformation
$m_0 \in \Md$ that takes $z_0$ to $w_0$ and that satisfies
$\nu(m_0(z))=\mu(z)$ for all $z \in \Omega_{z_0,R}$. We define
$$
\rho^* = \sup \{\rho \,;\, d^\rho_{\mu,\nu}(z_0,w_0)=0 \};
$$
clearly $\rho^* \geq R$. The theorem will be proved if we show that
$\rho^*= \infty$. We shall do this by contradiction.\\

\begin{figure} [t]
  % Requires \usepackage{graphicx}
\hspace*{.3 in}
\begin{minipage}{2.8 in}
  \caption{Illustration of the proof of Theorem \ref{thm:identity_of_indiscernibles}}\label{fig:for_proof_identity_of_indiscernibles}
\end{minipage}
\begin{minipage}{3 in}
\vspace*{-1.2 in}

\hspace*{.7 in} \includegraphics[width=2.4 in]{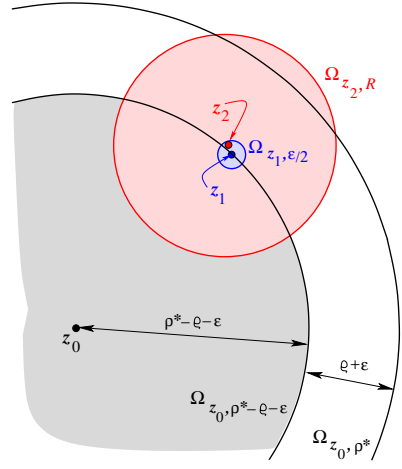}
\end{minipage}
%\hspace*{.5 in}
\
\end{figure}

Assume $\rho^* < \infty$. Consider $\Omega_{z_0,\rho^*}$, the
hyperbolic disk around $z_0$ of radius $\rho^*$. (See Figure
\ref{fig:for_proof_identity_of_indiscernibles} for illustration.)
Set $\eps = (R - \varrho)/2$, and consider the points on the
hyperbolic circle $C=\partial \Omega_{z_0, \rho^*-\varrho-\eps}$.
For every $z_1 \in C$, consider the hyperbolic disk
$\Omega_{z_1,\eps/2}$; by  Lemma
\ref{lem:in_every_disk_z_w_d(z,w)=0} there exists a point $z_2$ in
this disk and a corresponding point $w_2 \in \D$ such that
$d^R_{\mu,\nu}(z_2,w_2)=0$, i.e. such that
\[
\int_{\Omega_{z_2,R}}\,|\mu(z)-m'^*\nu(z)|\,d\vol_H(z) \,=\,0~
\]
for some \Mbs transformation $m'$ that maps $z_2$ to $w_2$; in
particular, we have that
\begin{equation}
\mu(z)=\nu(m'(z))~~\mbox{ for all }~ z \in \Omega_{z_2,R}~.
\label{e:ingrid:mprime}
\end{equation}
The hyperbolic distance from $z_2$ to $\partial \Omega_{z_0,\rho^*}$
is at least $\varrho+\eps/2$.
% and at most  $\varrho+3\eps/2$.
It follows that the hyperbolic disk $\Omega_{z_2,\varrho+\eps/4}$ is
completely contained in $\Omega_{z_0,\rho^*}$; since
$\mu(z)=\nu(m_0(z))$ for all $z \in \Omega_{z_0,\rho^*}$, this must
therefore hold, in particular, for all $z \in
\Omega_{z_2,\varrho+\eps/4}$. Since
$\Omega_{z_2,\varrho+\eps/4}\subset \Omega_{z_2,R}$, we also have
$\mu(z)=\nu(m'(z))$ for all $z \in \Omega_{z_2,\varrho+\eps/4}$, by
(\ref{e:ingrid:mprime}). This implies $\nu(w)=\nu(m_0\circ
(m')^{-1}(w))$ for all $w \in \Omega_{w_2,\varrho+\eps/4}$. Because
$\N$ is singly $\varrho\mbox{-}_{\mbox{\tiny{H}}}$fittable, it follows that $m_0\circ
(m')^{-1}$ must be the identity, or $m_0= m'$. Combining this with
(\ref{e:ingrid:mprime}), we have thus shown that
$\mu(z)=\nu(m_0(z))$ for all $z \in \Omega_{z_2,R}$.

Since the distance between $z_2$ and $z_1$ is at most $\eps/2$, we
also have
\begin{equation*}
\Omega_{z_2,R} \supset \Omega_{z_1,R - \eps/2} = \Omega_{z_1,\varrho
+ 3\eps/2}~. \label{e:ingrid:supset}
\end{equation*}

This implies that if we select such a point $z_2(z_1)$ for each
$z_1\in C$, then $\Omega_{z_0, \rho^* - \varrho-\eps}\cup
\left(\,\cup_{z_1 \in C}\,\Omega_{z_2(z_1),R}\right)$ covers the
open disk $\Omega_{z_0,\rho^*+\eps/2}$. By our earlier argument,
$\mu(z)=\nu(m_0(z))$ for all $z$ in each of the
$\Omega_{z_2(z_1),R}$; since the same is true on $\Omega_{z_0,
\rho^* - \varrho-\eps}$, it follows that $\mu(z)=\nu(m_0(z))$ for
all $z$ in $\Omega_{z_0,\rho^*+\eps/2}$. This contradicts the
definition of $\rho^*$ as the supremum of all radii for which this
was true; it follows that our initial assumption, that $\rho^*$ is
finite, cannot be true, completing the proof.
\end{proof}

For $(\D,\mu)$ to be singly
$\varrho\mbox{-}_{\mbox{\tiny{H}}}$fittable, no two hyperbolic disks
$\Omega_{z,R}$, $\Omega_{w,R}$ (where $w$ can equal $z$) can be
isometric via a M\"{o}bius transformation $m$, if $R > \varrho$,
except if $m=Id$. However, if $z$ is close (in the Euclidean sense)
to the boundary of $\D$, the hyperbolic disk $\Omega_{z,R}$ is very
small in the Euclidean sense, and corresponds to a very small piece
(near the boundary) of $\M$. This means that single
$\varrho\mbox{-}_{\mbox{\tiny{H}}}$fittability imposes restrictions
in increasingly small scales near the boundary of $\M$; from a
practical point of view, this is hard to check, and in many
applications, the behavior of $\M$ close to its boundary is
irrelevant. For this reason, we also formulate the following
relaxation of the results above.

\begin{defn}
A surface $\M$ is said to be {\em singly $A\mbox{-}_{\!_{\M}}$fittable}
(where $A> 0$) if there are no patches (i.e. open, path-connected
sets) in $\M$ of area larger than $A$ that are isometric, with
respect to the metric on $\M$.
\end{defn}

If a surface is singly $A\mbox{-}_{\!_{\M}}$fittable, then it is obviously also
$A'\mbox{-}_{\!_{\M}}$fittable for all $A' \geq A$; the condition of being
$A\mbox{-}_{\!_{\M}}$fittable
becomes more restrictive as $A$ decreases. The following theorem states that two
singly $A\mbox{-}_{\!_{\M}}$fittable surfaces at zero $\d^R$-distance from each other must
necessarily be isometric, up to some small boundary layer.

\begin{thm}
Consider two surfaces $\M$ and $\N$, with corresponding conformal factors $\mu$ and
$\nu$ on $\D$, and suppose $\d^R(\M,\N)=0$ for some $R>0$.
Then the following holds: for arbitrarily large $\rho>0$, there exist
a \Mbs transformation $m \in M_D$ and a value $A>0$ such that
if $\M$ and $\N$
are singly $A\mbox{-}_{\!_{\M}}$fittable
then
$\mu(m(z))=\nu(z)$, for all $z \in \Omega_{0,\rho}$.
\end{thm}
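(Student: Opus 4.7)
The plan is to retrace the proof of Theorem \ref{thm:identity_of_indiscernibles} almost line-for-line, but localized to a compact region of $\D$ and with single $A\mbox{-}_{\!_{\M}}$fittability substituted for single $\varrho\mbox{-}_{\mbox{\tiny{H}}}$fittability at the crucial step. Given the target radius $\rho$, fix a slightly larger working radius $\rho'>\rho+R$ and set $K:=\overline{\Omega_{0,\rho'+R}}\subset\D$. Because $\mu,\nu$ are continuous and strictly positive on $\D$, they are bounded below on the compact set $K$ by positive constants $\mu_{\min},\nu_{\min}$. Fix $\varrho\in(0,R/2)$, set $\eps:=(R-\varrho)/2$, and define
\[
A \;:=\; \tfrac{1}{2}\min(\mu_{\min},\nu_{\min})\,\Vol_H(\Omega_{0,\varrho})\,;
\]
this is the value of $A$ that the theorem will produce.

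From $\d^R(\M,\N)=T^R_d(\mu,\nu)=0$ there exists $\pi\in\Pi(\mu,\nu)$ with $\int d^R_{\mu,\nu}\,d\pi=0$. A small strengthening of Lemma \ref{lem:in_every_disk_z_w_d(z,w)=0}, based on the observation that
\[
\pi\bigl(\Omega_{z_0,\delta}\times\Omega_{0,\delta''}\bigr)\;\geq\;\mu(\Omega_{z_0,\delta})-\nu(\D\setminus\Omega_{0,\delta''}),
\]
whose right-hand side is positive once $\delta''$ is chosen large enough, lets us insist that both coordinates of the resulting zero of $d^R_{\mu,\nu}$ lie in pre-specified compact subsets of $\D$. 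Applying this strengthening, I would pick a seed pair $(z_0,w_0)$ with $d^R_{\mu,\nu}(z_0,w_0)=0$ and both points in $K$, yielding a \Mbs transformation $m_0$ with $m_0(z_0)=w_0$ and $\mu(z)=\nu(m_0(z))$ on $\Omega_{z_0,R}$. Setting $\rho^*:=\sup\{r:d^r_{\mu,\nu}(z_0,w_0)=0\}\geq R$, I would proceed by contradiction, assuming $\rho^*<\rho'$.

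The bootstrap step then runs essentially verbatim: for each $z_1$ on the circle $\partial\Omega_{z_0,\rho^*-\varrho-\eps}$, the strengthened lemma supplies nearby points $z_2,w_2$ (with $w_2$ forced to lie in $K$, using the uniform positivity of $\mu(\Omega_{z_1,\eps/2})$ for $z_1$ ranging over that compact circle) and a \Mbs $m'$ with $\mu(z)=\nu(m'(z))$ on $\Omega_{z_2,R}$. Combining this with $\mu(z)=\nu(m_0(z))$ on $\Omega_{z_2,\varrho+\eps/4}\subset\Omega_{z_0,\rho^*}$ produces, exactly as in Theorem \ref{thm:identity_of_indiscernibles}, the identity $\nu(w)=\nu(m_0\circ(m')^{-1}(w))$ on $\Omega_{w_2,\varrho+\eps/4}$. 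The substitution enters here: since $w_2\in K$, the $\N$-area of $\Omega_{w_2,\varrho+\eps/4}$ is at least $\nu_{\min}\Vol_H(\Omega_{0,\varrho+\eps/4})>A$, so single $A$-fittability of $\N$ forces $m_0\circ(m')^{-1}=\mathrm{Id}$, i.e.\ $m_0=m'$, and hence $\mu=\nu\circ m_0$ extends to all of $\Omega_{z_2,R}$. The Vitali-type covering of $\Omega_{z_0,\rho^*+\eps/2}$ by the $\Omega_{z_2,R}$ (together with $\Omega_{z_0,\rho^*-\varrho-\eps}$) carries over unchanged and contradicts the maximality of $\rho^*$. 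Therefore $\rho^*\geq\rho'$; after choosing the seed so that $\Omega_{z_0,\rho'}\supset m_0^{-1}(\Omega_{0,\rho})$, the \Mbs transformation $m:=m_0^{-1}$ satisfies $\mu(m(z))=\nu(z)$ on $\Omega_{0,\rho}$.

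The main obstacle I anticipate is precisely the localization bookkeeping in the strengthened form of Lemma \ref{lem:in_every_disk_z_w_d(z,w)=0}: the radius $\delta''$ must be chosen uniformly over the compact circle $\partial\Omega_{z_0,\rho^*-\varrho-\eps}$ so that every $w_2$ produced during the bootstrap really lands in $K$, where the lower bound $\nu_{\min}$ is available. This is a routine compactness argument from the continuity and positivity of $\mu$ and $\nu$, but it is the only genuine modification to the proof of Theorem \ref{thm:identity_of_indiscernibles}; once it is in hand, no further new idea is needed.
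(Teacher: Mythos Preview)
Your overall plan matches the paper's --- rerun the bootstrap of Theorem \ref{thm:identity_of_indiscernibles} with $A$-fittability replacing $\varrho\mbox{-}_{\mbox{\tiny{H}}}$fittability --- but you take a detour the paper avoids, and that detour creates a genuine ordering problem. The paper never localizes $w_2$. It seeds $z_0\in\Omega_{0,\eps/2}$ via the ordinary Lemma \ref{lem:in_every_disk_z_w_d(z,w)=0}, picks $r<R$, and chooses $A$ so that $\Omega_{0,\rho}\subset\OO_{A,r}:=\{z:\min_{\Omega_{z,r}}\mu>A/\Vol_H(\Omega_{0,r})\}$. The contradiction hypothesis $\rho^*<\rho+\eps/2$ then forces every bootstrap point $z_2$ into $\Omega_{0,\rho}\subset\OO_{A,r}$, so $\Vol_\M(\Omega_{z_2,r})>A$. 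The $\N$-volume needed to invoke $A$-fittability of $\N$ comes for free: on the overlap $\Omega_{z_2,r+\eps/4}$ the map $m_0$ is already a local isometry $(\D,\mu)\to(\D,\nu)$, hence $\Vol_\N\bigl(m_0(\Omega_{z_2,r+\eps/4})\bigr)=\Vol_\M\bigl(\Omega_{z_2,r+\eps/4}\bigr)>A$, and no information about where $w_2$ sits is required.

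Your route instead bounds the $\N$-volume directly via $\nu_{\min}$ on a compact set $K$ that you fix at the outset as $\overline{\Omega_{0,\rho'+R}}$. But the $\delta''$ in your strengthened lemma depends on a lower bound for $\mu(\Omega_{z_1,\eps/2})$ over the bootstrap circle, and nothing forces $\delta''\leq\rho'+R$; as written, you have no guarantee that $w_2\in K$. The repair is to reverse the order --- first pin down the $z$-region, then $\delta''$, and only then define $K$, $\nu_{\min}$, $A$ --- but that is not what you wrote. There is a second circularity in your closing sentence: you cannot ``choose the seed so that $\Omega_{z_0,\rho'}\supset m_0^{-1}(\Omega_{0,\rho})$'', since $m_0$ is determined by the seed and you have no control over $m_0^{-1}(0)$. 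The paper sidesteps this by taking $z_0\in\Omega_{0,\eps/2}$ from the start, so that $\Omega_{0,\rho}\subset\Omega_{z_0,\rho+\eps/2}\subset\Omega_{z_0,\rho^*}$ once the bootstrap delivers $\rho^*\geq\rho+\eps/2$.
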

\begin{proof}
Part of the proof follows the same lines as for Theorem
\ref{thm:identity_of_indiscernibles}. We highlight here only the new elements
needed for this proof.

First, note that, for arbitrary $r>0$ and $z_0 \in \D$,
\begin{equation}\label{e:lower_bound_for_patch_area}
    \Vol_\M(\Omega_{z_0,r}) = \int_{\Omega_{z_0,r}}\mu(z)d\vol_H(z)
    \geq \Vol_H(\Omega_{z_0,r})\left[\min_{z\in\Omega_{z_0,r}}\mu(z)\right] =
    \Vol_H(\Omega_{0,r})\left[\min_{z\in\Omega_{z_0,r}}\mu(z)\right].
\end{equation}
This motivates the definition of the sets $\OO_{A,r}$,
\begin{equation}\label{e:O_big_area_set}
    \OO_{A,r} = \set{ z\in \D \ \mid \ \min_{z'\in\Omega_{z,r}}\mu(z') >
    \frac{A}{\Vol_H(0,\Omega_{0,r})}
    };
\end{equation}
$A>0$ is still arbitrary at this point; its value will be set below.

Now pick $r<R$, and set $\eps=(R-r)/2$. Note that if $z \in \OO_{A,r}$, then
$\Vol_{\M}(\Omega_{z,R})\geq\Vol_{\M}(\Omega_{z,r})>A $.

Since $\mu$ is bounded below by a strictly positive constant on each $\Omega_{0,\rho'}$,
we can pick, for arbitrarily large $\rho$, $A>0$ such
that $\Omega_{0,\rho} \subset \OO_{A,r}$; for this it suffices that
$A$ exceed a threshold depending on $\rho$ and $r$.
(Since $\mu(z)\rightarrow 0$
as $z$ approaches the boundary of $\D$ in Euclidean norm, we expect this threshold
to tend towards $0$ as $\rho \rightarrow \infty$.) We assume that
$\Omega_{0,\rho} \subset \OO_{A,r}$ in what follows.

Similar to the proof of Theorem \ref{thm:identity_of_indiscernibles}, we
invoke Lemma
\ref{lem:in_every_disk_z_w_d(z,w)=0} to infer the existence of
$z_0,w_0$ such
that $z_0\in\Omega_{0,\eps/2}$  and
$d^R_{\mu,\nu}(z_0,w_0)=0$.
We denote
$$
\rho^* = \sup \{r' \,;\, d^{r'}_{\mu,\nu}(z_0,w_0)=0 \};
$$
as before, there exists a \Mbs transformation $m$ such that
$\nu(m(z))=\mu(z)$ for all $z$ in $\Omega_{z_0,\rho^*}$.
To complete our proof it therefore suffices to show that $\rho^* \geq \rho + \eps/2$, since
$\Omega_{0,\rho}\subset \Omega_{z_0,\rho+\eps/2}$ .

Suppose the opposite is true, i.e. $\rho^* < \rho+\eps/2$.
By the
same arguments as in the proof of Theorem
\ref{thm:identity_of_indiscernibles},  there exists, for each
$z_1\in \partial \Omega_{z_0,\rho^*-r-\eps}$, a point
$z_2 \in \Omega_{z_1,\eps/2}$ such that $d^R_{\mu,\nu}(z_2,w_2)=0$
for some $w_2$. Since the hyperbolic distance between $z_2$
and $0$ is bounded above by $\eps/2+\rho^*-r-\eps+\eps/2<\rho-r+\eps/2<\rho$,
$z_2 \in \Omega_{0,\rho} \subset \OO_{A,r}$, so that
$\Vol_{\M}(\Omega_{z_2,R})>A $. It then follows from the conditions
on $\M$ and $\N$ that $\nu(m(z))=\mu(z)$ for all z in $\Omega_{z_0,\rho^*}
\cup \Omega_{z_2,R} \supset  \Omega_{z_0,\rho^*}\cup \Omega_{z_1,r+3\eps/2}$.
Repeating the argument for all $z_1\in \partial \Omega_{z_0,\rho^*-r-\eps}$
shows that $\nu(m(z))=\mu(z)$ can be extended to all
$z \in \Omega_{z_0,\rho^*+\eps/2}$, leading to a contradiction that completes the proof.
\end{proof}

So far, we have dealt exclusively with disk-type surfaces.
The approach presented here can also be used for other surfaces,
however. In order to apply this approach to sphere-type
(genus zero) surfaces, for instance, we would need to change only one component in the
construction, namely how to define the neighborhoods $\Omega_{z_0,R}$ in a
M\"{o}bius-invariant way. Since there exists no M\"{o}bius-invariant distance
function on the sphere, we can not define the
neighborhoods $\Omega_{z_0,R}$ as disks with respect to such
an invariant distance. We thus need a different criterium to
pick, among all the circles centered at a point $z_0 \in \M$, the one circle
that shall delimit $\Omega_{z_0,R}$. Since the family of circles
{\em is} invariant under (general) M\"{o}bius transformations, it suffices
to pick a criterium that is itself invariant as well. For our applications,
we pick $\widetilde{\Omega}_{z_0,A}$ to be the interior of the
circle around $z_0$ that has the smallest circumference among all such circles with area
(or volume) $A$ (where $A \in (0,1)\,$).
In the
generic case this procedure defines a unique neighborhood that
can be used in the same way as $\Omega_{z_0,R}$ up till now.

For higher genus (but still homeomorphic) surfaces $\M,\,\N$ it is
natural to use the universal coverings $\wt{\M},\,\wt{\N}$,
respectively.  For genus greater than one, we could use again the
disk-type construction. The fix here (to avoid infinite volume of
the flattened surface via the universal covering) could be to
restrict the compared $z_0\in \D$ to \emph{one copy} of $\M$ in the
hyperbolic disk (and similarly $w_0\in \D$ in $\N$). Treating the
genus one case can be done similarly with similarity transformations
in $\C$.

% ----------------------------------------------------------------
\section{Discretization and implementation}
\label{s:the_discrete_case_implementation}
Several steps are needed to transform the theoretical framework of
the preceding sections into an algorithm, as described in detail in
\cite{YL-ID-2009}. In a nutshell, the procedure requires three
approximation steps: 1) approximating the smooth surfaces with a
discrete mesh, 2) using discrete conformal theory to construct a
discrete analog of uniformization for meshes, and 3) reducing the
discrete optimization problem (resulting from replacing $\mu,\nu$ in
eq.~(\ref{e:generalized_Kantorovich_transportation}) by their
discrete versions supported on a finite set of points) to a linear
program.
%In the special case where one seeks not only a similarity
%distance between surfaces but also the best possible correspondence
%between points on both surfaces,
If an equal number of discrete point masses is chosen for the
discrete measure on each of the two surfaces, and all of them are
given equal weight, the corresponding search for the optimal
bistochastic matrix automatically produces a minimizer that is a
{\em permutation}. This means that the minimizer defines a map from
(the discretized version of) one surface to (the discretized version
of) the other.
%optimal correspondence then amounts to identifying an optimal
%permutation, and is no longer a convex problem; one can prove,
%however, that enlarging the search to the optimal bistochastic map
%(which relaxes the problem, rendering it convex again, and therefore
%solvable by a linear program) leads to the same solution.

It follows that the surface distance given in this paper does indeed
lead to a computationally efficient approach, both for finding the
best similarity distance and for identifying the best correspondence
between two (disk-type) surfaces. Figure \ref{f:discrete_type_1}
shows an example of a discrete surface and the corresponding
approximate conformal density visualized as a graph over the unit
disk.

Efficient computation of a distance between surfaces is important
for many applications. As an example, Figure
\ref{fig:distance_graph_embedded} shows an application of our
approach to the characterization of mammals by the surfaces of their
molars \cite{Daubechies10}, comparing high resolution scans of the
masticating surfaces of molars of several lemurs (small primates
living in Madagascar). The figure shows an embedding of eight
molars, coming from individuals in four different species (indicated
by color). The embedding is based on the pairwise distance matrix
($\d^R(\M_i,\M_j)$), and it clearly agrees with the clustering by
species, as communicated to us by the biologists from whom we
obtained the data sets.

\begin{figure}[ht]
\centering \setlength{\tabcolsep}{0.4cm}
\begin{tabular}{@{\hspace{0.0cm}}c@{\hspace{0.2cm}}c@{\hspace{0.0cm}}}
%c@{\hspace{0.0cm}}}
\includegraphics[width=0.4\columnwidth]{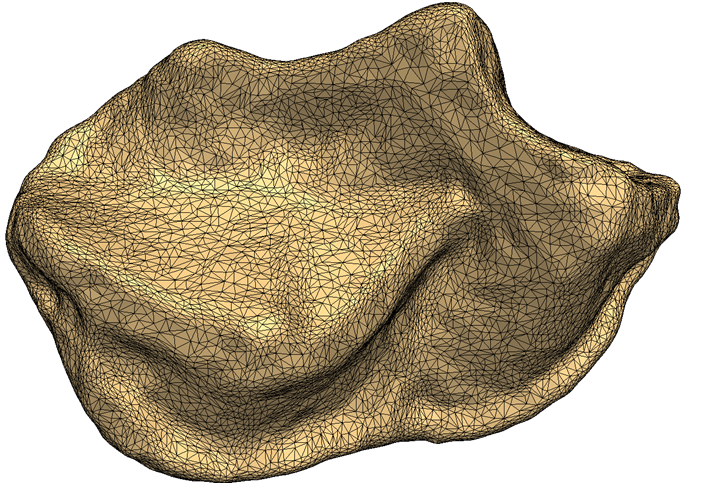}
&
\includegraphics[width=0.5\columnwidth]{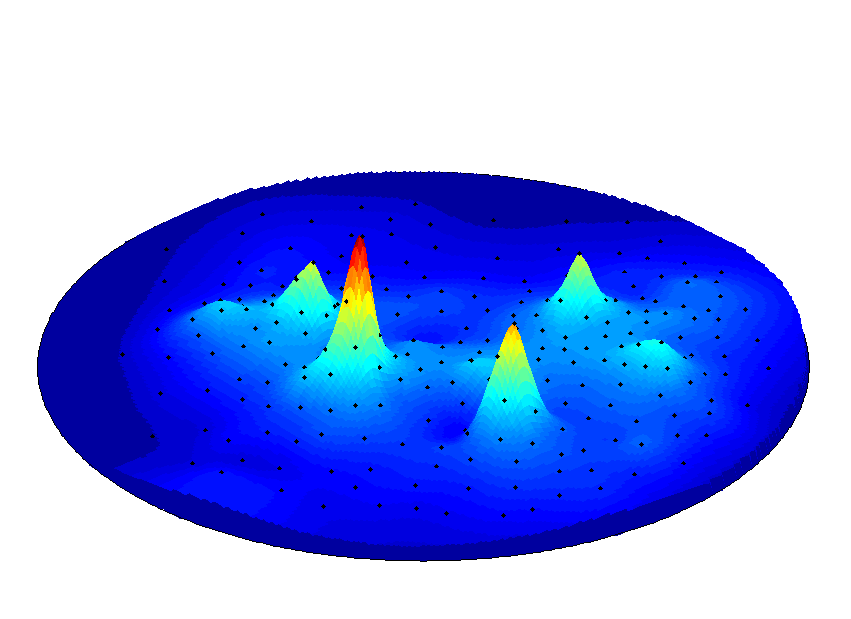}
\\
The discrete representation of a surface (mesh) & Conformal density
over the unit disk
\end{tabular}
\caption{A mammalian tooth discrete surface mesh, and its
approximated conformal factor over the unit
disk.}\label{f:discrete_type_1}
\end{figure}

\begin{figure}[h]
%\begin{figure}[h]
\centering
%\begin{table}
\includegraphics[width=0.9\columnwidth]{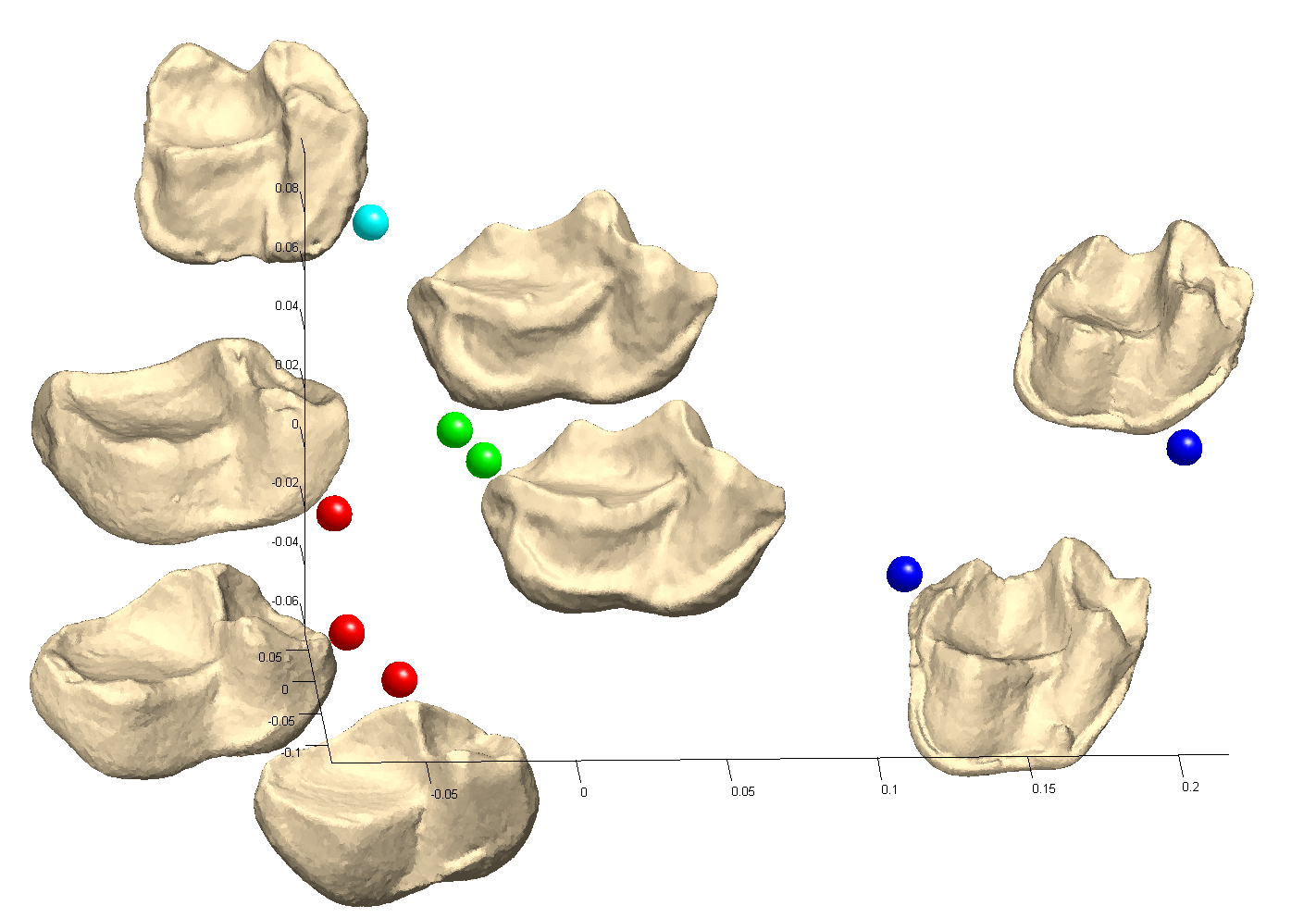}
\caption{Embedding of the distance graph of eight teeth models using
multi-dimensional scaling. Different colors represent different
lemur species. The graph suggests that the geometry of the teeth
might suffice to classify species.}
\label{fig:distance_graph_embedded}
%\end{table}
\end{figure}

%% ----------------------------------------------------------------
%\section{Examples and comments}
%\label{s:examples}
%\input{examples}

%% -----------------------------------------
%\section{Comclusions}

%% -----------------------------------------
\section{Acknowledgments}
The authors would like to thank C\'{e}dric Villani and Thomas
Funkhouser for valuable discussions, and Jesus Puente for helping
with the implementation. We are grateful to Jukka Jernvall, Stephen
King, and Doug Boyer for providing us with the tooth data sets, and
for many interesting comments. ID gratefully acknowledges (partial)
support for this work by NSF grant DMS-0914892, and by an AFOSR
Complex Networks grant; YL thanks the Rothschild foundation for
postdoctoral fellowship support.

%----------------------------
\appendix
\renewcommand{\thesection}{ \Alph{section}}
\section{}
\label{a:appendix A}
\renewcommand{\thesection}{\Alph{section}}
%\section*{Appendix A}
This Appendix contains some technical proofs of Lemmas and Theorems
stated in section \ref{s:optimal_vol_trans_for_surfaces}, and
\ref{s:the_discrete_case_implementation}.
We start by proving that (\ref{quot_dist}) does indeed define a distance metric
on the family of $\overline{\mu}:=\{m_*\mu\,;\,m \in \Md\,\}$, where the $\mu$ are (smooth)
conformal factors on $\D$, as obtained in sections 2 and 3. We first state a more general lemma:

\begin{lem} Let $X,\,d$ be a metric space, $G$ a group, and $T: \, g\, \mapsto \,T_g $ a representation of $G$ into the 
isometries of $X, \, d$; in particular,  $d$ is invariant under 
the action of the group $G$, i.e. $d(T_gx,T_Gy)\,=\,d(x,y)$, for
all $x,\,y \in X$ and all $g \in G$. Define $\mathcal{C}$ to be the collection of orbits of the representation of $G$, 
i.e. the elements of $\mathcal{C}$ take of the form $\{T_gx\,;\, g \in G\}$, for some $x\in X$. Define $\widetilde{d}$
on $\mathcal{C}\times \mathcal{C}$ by $\widetilde{d}(c_1,c_2)= \inf_{x_1 \in c_1, \, x_2 \in c_2} d(x_1,x_2)$. Then
$\widetilde{d}$ defines a semi-metric on $\mathcal{C}$.
\end{lem}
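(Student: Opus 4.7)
The statement asserts that $\widetilde{d}$ is a semi-metric on $\mathcal{C}$, so I must verify non-negativity, symmetry, the property $\widetilde{d}(c,c)=0$, and the triangle inequality. The first three are essentially automatic: non-negativity is inherited directly from $d$; symmetry follows because the infimum defining $\widetilde{d}(c_1,c_2)$ is over an unordered pair of choices $x_1 \in c_1$, $x_2 \in c_2$ and $d$ itself is symmetric; and $\widetilde{d}(c,c)=0$ follows by picking any $x \in c$ and noting $d(x,x)=0$. I would dispense with these in a single short paragraph.

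The substantive step, and the main obstacle, is the triangle inequality. The naive approach — pick near-optimal pairs $(x_1,x_2)$ realizing $\widetilde{d}(c_1,c_2)$ and $(x_2',x_3)$ realizing $\widetilde{d}(c_2,c_3)$ and apply the triangle inequality in $X$ — fails because the two representatives $x_2$ and $x_2'$ of the middle orbit need not coincide, so one cannot directly chain $d(x_1,x_2)+d(x_2',x_3)$ through the triangle inequality of $d$. The key is to use the group action together with the $G$-invariance of $d$ to align these two representatives.

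Concretely, my plan is as follows. Fix $\epsilon > 0$ and choose $x_1 \in c_1$, $x_2 \in c_2$ with $d(x_1,x_2) \le \widetilde{d}(c_1,c_2)+\epsilon/2$, and $x_2' \in c_2$, $x_3 \in c_3$ with $d(x_2',x_3) \le \widetilde{d}(c_2,c_3)+\epsilon/2$. Since $x_2,x_2'$ lie in the same orbit, there exists $g \in G$ with $T_g x_2 = x_2'$. Apply $T_g$ to the first pair: because $T_g$ sends $c_1$ to itself (orbits are preserved by the group action) and is an isometry of $(X,d)$, we have $T_g x_1 \in c_1$ and
\[
d(T_g x_1, x_2') \;=\; d(T_g x_1, T_g x_2) \;=\; d(x_1, x_2) \;\le\; \widetilde{d}(c_1,c_2)+\epsilon/2.
\]
Now the ordinary triangle inequality in $(X,d)$ yields
\[
d(T_g x_1, x_3) \;\le\; d(T_g x_1, x_2') + d(x_2', x_3) \;\le\; \widetilde{d}(c_1,c_2) + \widetilde{d}(c_2,c_3) + \epsilon.
\]
Since $T_g x_1 \in c_1$ and $x_3 \in c_3$, taking the infimum on the left and letting $\epsilon \to 0$ gives $\widetilde{d}(c_1,c_3) \le \widetilde{d}(c_1,c_2) + \widetilde{d}(c_2,c_3)$, completing the proof.

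The only subtlety worth flagging is the use of two facts in tandem: that $T_g$ preserves each orbit setwise (which justifies $T_g x_1 \in c_1$), and that $T_g$ is an isometry of $(X,d)$ (which justifies the equality of distances). Both are part of the hypothesis that $T$ represents $G$ by isometries. No further assumption on $G$ (e.g., closedness of orbits) is needed for the semi-metric conclusion; of course, to upgrade to a genuine metric one would need the orbits to be separated by $d$, which is exactly the obstruction the main body of the paper addresses by introducing the $d^R_{\mu,\nu}$-based construction.
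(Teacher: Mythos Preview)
Your proof is correct and follows essentially the same approach as the paper: both arguments use the $G$-invariance of $d$ to align the two representatives of the middle orbit and then invoke the ordinary triangle inequality in $X$. The paper parametrizes orbit elements by group elements and manipulates the infimum directly, whereas you use an $\epsilon$-approximation with near-optimal representatives and an explicit aligning element $g$, but this is a purely cosmetic difference.
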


\begin{proof}
It is obvious that $\widetilde{d}(c_1,c_2)\ge 0$ for all $c_1,\,c_2$ in $\mathcal{C}$; thus only the triangle inequality needs to be established. \\
Since an element $c_1$ of $\mathcal{C}$ can always be written as $c_1\,=\,\{T_gx; g \in G\}$, where $x$ is an arbitrary element of $G$, we obtain, for arbitrary $c_1$, $c_2$, $c_3$ in $\mathcal{C}$,
\begin{align*}
\widetilde{d}(c_1,c_3)&= \inf_{g,\,g' \in G} d(T_gx, T_{g'}z)~~ & \mbox{ where } x\in c_1,\, z \in c_3 \mbox{ are arbitrary}\\
 & \le d(T_{g_1}x, T_{g_3}z)    ~~ & \mbox{ for all } g_1,\, g_3 \in G \\
& \le d(T_{g_1}x, T_{g_2}y)\,+\,d(T_{g_2}y, T_{g_3}z) ~~ & \mbox{ for all } g_1,\, g_2,\,g_3 \in G \mbox{ and all } y \in X\\
& =  d(T_{g_1}x, T_{g_2}y)\,+\,d(T_{g'_2}y, T_{g'_2(g_2)^{-1}g_3}z) ~~ &\mbox{ for all } g_1,\, g_2,\,g'_2,\,g_3 \in G \mbox{ and all } y \in X.
\end{align*}
When $g'_2,\, g_2$ in $G$ are kept fixed, the group elements $g'_2(g_2)^{-1}g_3$ run through all of $G$ as $g_3$ varies 
over $G$. By taking the infimum over the choices of $g_1,\, g_2,\,g'_2,\,g_3 \in G$ in the last expression, we thus obtain
\[
\widetilde{d}(c_1,c_3) \le \widetilde{d}(c_1,c_2)\,+\,\widetilde{d}(c_2,c_3)~,
\]
where $c_2:=\{T_gy\,;\,g \in G\}$. Since $y \in X$ is arbitrary, this proves the triangle inequality in $\mathcal{C}$
for all three-tuples in $\mathcal{C}$.
\end{proof}

Note that one can use the invariance of $d$ under the action of the group on $X$ to define $\widetilde{d}(c_1,c_2)$
via a single  minimization (instead of two): for $x \in c_1$, $y \in c_2$, 
\[
\widetilde{d}(c_1,c_2)\,=\, \inf_{g,\,g' \in G}\,d(T_gx, T_{g'}y)\, =\, \inf_{g,\,g' \in G} \,d(x, T_{g^{-1}g'}y)\,=\,\inf_{g'' \in G}\, d(x, T_{g''}y)~.
\] 

To apply this to  (\ref{quot_dist}), we choose $X$ to be the set of nonnegative $C^1$-functions on $\D$ that
have integral 1 with respect to the hyperbolic area measure on $\D$, and $d$ the Kantorovich mass transport distance between them, with the ``work'' measured in terms of the hyperbolic distance metric $d^{\mbox{\tiny{H}}}$ on $\D$: 
\[
\mbox{d}(\mu,\nu)\,=\, \inf_{\pi \in \Pi(\mu,\nu)}\,\int_{\D\times\D}\,d^{\mbox{\tiny{H}}}(z,w)\, d\pi(z,w)\,.
\]
The group $G$ is here given by $\Md$, and the action of $G$ on $X$ by pull-back: $T_m(\mu)\,=\,m_*\mu$. To apply
the lemma, we first need to establish that :

\begin{lem}
$\mbox{\rm{d}}(\mu,\nu)\,=\,\mbox{\rm{d}}(m_*\mu,m_*\nu)$, for all $\mu$, $\nu$ in $X$, 
and all $m$ in $\Md$.
\end{lem}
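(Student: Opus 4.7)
The plan is to exploit the fact that disk-preserving M\"obius transformations are isometries of the hyperbolic metric on $\D$, together with a change-of-variables bijection between the admissible coupling sets $\Pi(\mu,\nu)$ and $\Pi(m_*\mu, m_*\nu)$.

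First I would record the two ingredients needed. The hyperbolic distance is $\Md$-invariant, $d^{\mbox{\tiny{H}}}(m(z),m(w))=d^{\mbox{\tiny{H}}}(z,w)$ for all $z,w\in\D$ and $m\in\Md$, which is a standard consequence of equation \eqref{e:disk_mobius_is_isometry_of_hyperbolic_geom}. Second, given any $\pi\in\Pi(\mu,\nu)$, I would consider the push-forward $\tilde\pi := (m\times m)_*\pi$, defined by $\tilde\pi(A\times B)=\pi(m^{-1}(A)\times m^{-1}(B))$ for Borel $A,B\subset\D$. A direct check of marginals gives $\tilde\pi(A\times\D)=\pi(m^{-1}(A)\times\D)=\mu(m^{-1}(A))=(m_*\mu)(A)$, and similarly for the second marginal, so $\tilde\pi\in\Pi(m_*\mu,m_*\nu)$. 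Since $m$ is a homeomorphism of $\D$ onto itself, the map $\pi\mapsto\tilde\pi$ is a bijection from $\Pi(\mu,\nu)$ onto $\Pi(m_*\mu,m_*\nu)$, with inverse obtained by using $m^{-1}\in\Md$.

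Next I would match the integrals using the change-of-variables formula and hyperbolic invariance:
\begin{align*}
\int_{\D\times\D} d^{\mbox{\tiny{H}}}(z,w)\,d\tilde\pi(z,w)
&= \int_{\D\times\D} d^{\mbox{\tiny{H}}}(m(z'),m(w'))\,d\pi(z',w')\\
&= \int_{\D\times\D} d^{\mbox{\tiny{H}}}(z',w')\,d\pi(z',w').
\end{align*}
Taking the infimum over $\pi\in\Pi(\mu,\nu)$ on the right and over $\tilde\pi\in\Pi(m_*\mu,m_*\nu)$ on the left (valid since the correspondence is bijective) yields $\mbox{d}(\mu,\nu)=\mbox{d}(m_*\mu,m_*\nu)$.

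There is no real obstacle: the only minor technicality is verifying that the push-forward $(m\times m)_*$ is indeed a bijection between the two coupling sets and that it preserves the cost integral. Both follow from the fact that $m$ is a hyperbolic isometry and a measurable bijection of $\D$; no regularity beyond $\mu,\nu\in X$ is required.
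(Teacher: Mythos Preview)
Your proof is correct and essentially identical to the paper's own argument: both push forward couplings via $(m\times m)_*$, verify this gives a bijection $\Pi(\mu,\nu)\to\Pi(m_*\mu,m_*\nu)$, and then use the $\Md$-invariance of $d^{\mbox{\tiny{H}}}$ together with change of variables to match the cost integrals before taking infima. There is nothing to add or correct.
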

\begin{proof}
We first rewrite $\mbox{d}(m_*\mu, m_*\nu)$ in a different way. For each $\pi \in \Pi(\mu,\nu)$, we define the
probability measure $m_*\pi$ on $\D\times \D$ by 
$m_*\pi(E)\,=\,\pi\left(\,\{\,(m^{-1}z,m^{-1}w)\,;\,(z,w) \in E  \,\}\,\right)$. It is straightforward to check that
$m_*\pi(A \times \D)\,=\,m_*\mu(A)$ and $m_*\pi(\D \times B)\,=\,m_*\nu(B)$ for all Borel sets $A$ , $B \subset \D$; thus  $m_*\pi \in \Pi(m_*\mu,m_*\nu)$. One can analogously define $m^*\pi$; again it is straightforward that
$m^*m_*\pi\,=\,\pi$. It follows that $\Pi(m_*\mu,m_*\nu)$ is exactly equal to $\{\,m_*\pi\,;\,\pi \in \Pi(\mu,\nu) \,\}$.

Consequently, using the invariance $d^{\mbox{\tiny{H}}}(mz,mw)\,=\,d^{\mbox{\tiny{H}}}(z,w)$, we obtain
\begin{align*}
\mbox{d}(m_*\mu,m_*\nu)\,&=\,\inf_{\pi \in \Pi(\mu,\nu)}\,\int_{\D \times \D}\,d^{\mbox{\tiny{H}}}(z,w)
\, d(m_*\pi)(z,w)
=\inf_{\pi \in \Pi(\mu,\nu)}\,\int_{\D \times \D}\,d^{\mbox{\tiny{H}}}(mu,mv)\, d\pi(u,v) \\
&=\inf_{\pi \in \Pi(\mu,\nu)}\,\int_{\D \times \D}\,d^{\mbox{\tiny{H}}}(u,v)\, d\pi(u,v) \,=\,\mbox{d}(\mu,\nu)~.
\end{align*}
\end{proof}

It follows that we can indeed apply the first lemma, and that (\ref{quot_dist})
defines a semi-metric on the equivalence classes 
of conformal factors,
where two conformal factors are viewed as equivalent if one can be obtained from the other by pushing it forward (or
backward) through a M\"{o}bius transformation.

It turns out that in this case, the infimum over the choices $m \in \Md$ is in fact always achieved (and is thus a minimum):

\begin{lem} Let $\mu$ and $\nu$ be conformal factors obtained by uniformizing two
smooth disk-type surfaces, with $\mbox{Distance}(\mu,\nu)<\infty$ defined as in {\rm(\ref{quot_dist})}. 
Then there exists a M\"{o}bius transformation $m \in \Md$ such that 
$\mbox{Distance}(\mu,\nu)\,=\, \mbox{\rm{d}}(m_*\mu,\nu)$.
\end{lem}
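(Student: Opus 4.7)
\emph{Strategy.} The plan is the direct method of the calculus of variations applied to the functional $F:\Md\to[0,\infty]$, $F(m):=\mbox{d}(m_*\mu,\nu)$: I would pick a minimizing sequence $m_n\in\Md$, use coercivity to extract a subsequential limit $m^\star\in\Md$, and then invoke continuity of $F$ to conclude that $F(m^\star)=\mbox{Distance}(\mu,\nu)$. Identifying $\Md$ with $\D\times S^1$ via the parametrization (\ref{e:disk_mobius}), the $\theta$-factor already lies in a compact set, so the only way a sequence can escape every compact subset of $\Md$ is by having $|a_n|\to 1$. The main content of the proof is therefore ruling this out.

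\emph{Coercivity.} I would apply the Kantorovich--Rubinstein duality $W_1(\rho,\sigma)=\sup_{f\,\text{1-Lip}}|\int f\,d\rho-\int f\,d\sigma|$ (with respect to the hyperbolic distance $d^{\mbox{\tiny{H}}}$) to the $1$-Lipschitz test function $z\mapsto d^{\mbox{\tiny{H}}}(0,z)$. Using $\Md$-invariance of $d^{\mbox{\tiny{H}}}$ and $m^{-1}(0)=a$, one finds
\[
F(m)\,\geq\,\left|\int_\D d^{\mbox{\tiny{H}}}(a,z)\,\mu(z)\,d\vol_H(z)\,-\,\int_\D d^{\mbox{\tiny{H}}}(0,w)\,\nu(w)\,d\vol_H(w)\right|.
\]
By the reverse triangle inequality $d^{\mbox{\tiny{H}}}(a,z)\geq d^{\mbox{\tiny{H}}}(a,0)-d^{\mbox{\tiny{H}}}(0,z)$, the first integral is at least $d^{\mbox{\tiny{H}}}(a,0)-\int d^{\mbox{\tiny{H}}}(0,z)\mu(z)\,d\vol_H(z)$. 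The finiteness of the hyperbolic first moments of $\mu$ and $\nu$ is where the smoothness of the underlying surfaces enters: for a smooth disk-type surface the conformal factor $\mu$ is continuous on $\bbar{\D}$ and vanishes like $(1-|z|^2)^2$ at the boundary, so $\int d^{\mbox{\tiny{H}}}(0,z)\mu(z)\,d\vol_H(z)<\infty$, and similarly for $\nu$. Hence $F(m)\geq d^{\mbox{\tiny{H}}}(a,0)-C_{\mu,\nu}\to\infty$ as $|a|\to 1$.

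\emph{Continuity of $F$.} If $m_n\to m$ in $\Md$, then $m_n^{-1}\to m^{-1}$ uniformly on compact subsets of $\D$, and so the pushforward densities $(m_n)_*\mu(w)=\mu(m_n^{-1}(w))$ converge uniformly on compacts to $(m_*\mu)(w)$ by continuity of $\mu$. Together with the uniform decay of these densities near $\partial\D$ (inherited from that of $\mu$), this yields both narrow convergence $(m_n)_*\mu\to m_*\mu$ and uniform integrability of $d^{\mbox{\tiny{H}}}(0,\cdot)$ against the family; these are exactly the conditions that imply stability of $W_1$, so $F(m_n)\to F(m)$.

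\emph{Conclusion and main obstacle.} Any minimizing sequence satisfies $F(m_n)\leq\mbox{Distance}(\mu,\nu)+1$ eventually, so by coercivity $\{m_n\}$ is precompact in $\D\times S^1$. Any subsequential limit $m^\star$ lies in $\Md$ (not on the boundary) and, by continuity, achieves the infimum. The main technical obstacle is the coercivity step: one must verify that the infimum is not approached ``at infinity'' in $\Md$. This is exactly where the Möbius invariance of $d^{\mbox{\tiny{H}}}$ is essential, because it converts the escape of $a$ to $\partial\D$ into a genuine hyperbolic translation of the center of mass of $m_*\mu$ toward the boundary, forcing its $W_1$-distance from the fixed measure $\nu$ to diverge.
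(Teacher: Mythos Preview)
Your proposal is correct and shares the paper's overall skeleton: take a minimizing sequence $(m_n)\subset\Md$, identify it with parameters $(a_n,e^{i\theta_n})\in\D\times S^1$, rule out $|a_n|\to 1$, and pass to the limit at an interior point $\bar m$. The implementations of the two key steps differ in interesting ways. For coercivity, the paper argues by hand that when $|a_k|$ is close to $1$, the map $m_k$ sends any fixed disk $\{|z|<\rho\}$ into a thin annulus near $\partial\D$, and then bounds the hyperbolic transport cost from below on that set by roughly $\tfrac12\ln(n+1)$; your Kantorovich--Rubinstein argument with the $1$-Lipschitz test function $d^{\mbox{\tiny H}}(0,\cdot)$ is shorter and more conceptual, and it isolates exactly where the regularity of the surfaces enters (finiteness of $\int d^{\mbox{\tiny H}}(0,z)\,\mu\,d\vol_H$, which is indeed guaranteed by the paper's standing assumption that the Euclidean density $\tilde\mu$ is bounded). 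For the passage to the limit, the paper dominates $d^{\mbox{\tiny H}}(m_n(z),w)$ uniformly by $d^{\mbox{\tiny H}}(z,0)+d^{\mbox{\tiny H}}(0,\bar a)+1+d^{\mbox{\tiny H}}(0,w)$ and invokes dominated convergence, whereas you appeal to the standard $W_1$-stability theorem (narrow convergence plus uniform integrability of first moments). Both routes work; in fact only lower semicontinuity of $W_1$ under narrow convergence of $(m_n)_*\mu$ is needed to conclude $\mbox{d}(\bar m_*\mu,\nu)\le\liminf_n\mbox{d}((m_n)_*\mu,\nu)=\mbox{Distance}(\mu,\nu)$, which is weaker than the full continuity you establish.
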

\begin{proof}
Consider two arbitrary (but fixed) conformal factors $\mu$ and $\nu$ on $\D$.
There exists a sequence 
$\left(m_n\right)_{n \in \mathbb{N}}$ such that 
$\inf_{\pi \in \Pi(\mu,\nu)}\,\int_{\D\times\D}\,d^{\mbox{\tiny{H}}}(m_n(z),w)\,d\pi(z,w)$ 
$\rightarrow \mbox{Distance}(\mu,\nu)\,$ as $n\, \rightarrow\,\infty$.
Each of these $m_n$ can be written in the form given by (\ref{e:disk_mobius}), with corresponding $a_n \in \D$, and 
$e^{i\theta_n} \in \mathbb{T}:=\{\,z\in \mathbb{C}\,;\, |z|=1\,\}$. By passing to a subsequence if necessary, we can assume, without loss of generality, that
the sequences $\left(a_n \right)_{n \in \mathbb{N}}$ and $\left( e^{i \theta_n}\right)_{n \in \mathbb{N}}$ 
converge in  $\overline{\D}$ (the closure of $\D$) and $\mathbb{T}$, respectively,
to limits we denote by $\overline{a}$
and $e^{i\overline{\theta}}$. 

If $\overline{a}$ lies in the open disk $\D$, then it defines, together with  
$e^{i\overline{\theta}}$, a corresponding $\overline{m}\in \Md$. We then have, for all $z,\,w$ in $\D$,
$\lim_{n\rightarrow \infty}\,d^{\mbox{\tiny{H}}}(m_n(z),w)\,=\,d^{\mbox{\tiny{H}}}(\overline{m}(z),w)$.
On the other hand, for sufficiently large $n$ we have
\begin{eqnarray*}
d^{\mbox{\tiny{H}}}(m_n(z),w) &\leq& d^{\mbox{\tiny{H}}}(m_n(z),0)\,+\, d^{\mbox{\tiny{H}}}(0,w) \\
&=& d^{\mbox{\tiny{H}}}(z,a_n)\,+\, d^{\mbox{\tiny{H}}}(0,w)\\
&\leq& d^{\mbox{\tiny{H}}}(z,\overline{a})\,+1\,+\, d^{\mbox{\tiny{H}}}(0,w)\\
&\leq& d^{\mbox{\tiny{H}}}(z,0)\,+\, d^{\mbox{\tiny{H}}}(0,\overline{a})\,+1\,+\, d^{\mbox{\tiny{H}}}(0,w),
\end{eqnarray*}
where we have used the invariance of $d^{\mbox{\tiny{H}}}$ under M\"{o}bius transformations and 
$m_n(a_n)=0$ in the second line, and where we assume $n$ sufficiently large to ensure 
$d^{\mbox{\tiny{H}}}(a_n,\overline{a})\leq 1$ in the third.

Therefore $d^{\mbox{\tiny{H}}}(m_n(z),w)$ is bounded, uniformly in $n$, by a function
that is absolutely
integrable with respect to $\pi$ (by the argument used just before the statement of Lemma A.2); 
the dominated convergence theorem then implies that
\begin{eqnarray*}
\mbox{\hausaD}(\mu,\nu)&=&\lim_{n\rightarrow \infty}\,\int_{\D\times \D}\,d^{\mbox{\tiny{H}}}(m_n(z),w)\,d\pi(z,w)\\
&=&\int_{\D\times \D}\,d^{\mbox{\tiny{H}}}(\overline{m}(z),w)\,d\pi(z,w)\,,
\end{eqnarray*}
so that we are done for the case where $\overline{a} \in \D$.

It remains to discuss the case where $\overline{a} \in \overline{\D}\setminus \D= \mathbb{T}$, i.e. $|a|\,=\,1$.
The proof will be complete if we show that this is impossible; we will establish this by contradiction. 

From now on, we suppose that 
$|\overline{a}|=1$. By the integrability of $\mu$ and $\nu$, we can find an increasing sequence of $\rho_n<1$ such that
$\mu\left(\{z\,;\,1>|z|>\rho_n \, \} \right) < 1/n$ and 
$\nu\left(\{z\,;\,1>|z|>\rho_n\,  \} \right) < 1/n $ .
%as $n$ tends to $\infty$.
It is easy to check that 
\[
\mbox{for } |a|>R \mbox{ and }|z|<\rho<R \mbox{ : }\left|\frac{z-a}{1-a^*z}  \right| > 
\frac{R-\rho}{1-R\rho}~.
\]
This lower bound tends to 1 as $R$ tends to 1, regardless of the value of $\rho < 1$. It follows that there
exist $R_n<1$ so that 
\[
\inf_{|z| < \rho_n} \left|\frac{z-a}{1-a^*z}  \right| > (n+\rho_n)/(n+1), ~ \mbox{ for all } a \mbox{ with } |a|>R_n~.
\]
Because
$|\overline{a}|=1$, we can find a  $k_n \in \mathbb{N}$ such that
$|a_k|>R_n$ for all $k > k_n$; consequently 
$|m_k(z)|\,=\,|z-a_k|/|1-(a_k)^*z|>(n+\rho_n)/(n+1)\,$ for all $k> k_n$ and all $z$ with $|z|< \rho_n$. 
It then follows that (with the notation $\D_n\,:=\, \{\,z\,;\,|z|< \rho_n\,\}$)
\begin{align*}
\forall k> k_n\,, \, \forall \pi \in \Pi(\mu,\nu):\,\int_{\D\times \D}\,&d^{\mbox{\tiny{H}}}(m_k(z),w)\,d\pi(z,w)\\
&\geq \int_{\D_n\times \D_n}\,d^{\mbox{\tiny{H}}}(m_k(z),w)\,d\pi(z,w)\\
&\geq \int_{\D_n\times \D_n}\,\left[\,\inf_{|v|<\rho_n,\,|u|>(n+\rho_n)/(n+1))}d^{\mbox{\tiny{H}}}(u,v)\,\right]\,d\pi(z,w)\\
&\geq \frac{\ln(n+1)}{2}\,\int_{\D_n\times \D_n}\,d\pi(z,w) \\
&\geq \frac{\ln(n+1)}{2}\,
\left[\,1\,-\,\pi((\D\setminus\D_n)\times \D)\,-\,\pi(\D\times (\D\setminus\D_n))\,\right]\\
& \geq \frac{\ln(n+1)}{2}\,
\left(\,1\,-\,\frac{2}{n}   \,\right)~,
\end{align*}
where we have used that if $|u|<r<1$, and $|v|>(n+r)/(n+1)$, then $d^{\mbox{\tiny{H}}}(u,v)\geq\int_{r}^{(n+r)/(1+n)}\,
\frac{1}{1-t^2} \,dt \geq \frac{1}{2}\,\int_{r}^{(n+r)/(1+n)}\,\frac{1}{1-t}\,dt \,=\,\frac{\ln (n+1)}{2} $ .
This shows, in particular, that 
\[
\mbox{\rm{d}}(\mu,m_k^*\nu)\,=\,\inf_{\pi \in \Pi(\mu,\nu)}\,\int_{\D\times \D}\,d^{\mbox{\tiny{H}}}(m_k(z),w)\,d\pi(z,w) \geq \frac{\ln (n+1)}{4}
\]
for all $k>k_n$ and $n>4$. This implies that, for arbitrary $n>4,\, n \in \mathbb{N}$, 
\[
\mbox{Distance}(\mu,\nu)\,=\,lim_{k\rightarrow \infty}\,\mbox{\rm{d}}(\mu,m_k^*\nu) \geq \frac{\ln (n+1)}{4}\,,
\]
i.e. $\mbox{Distance}(\mu,\nu)\,=\,\infty$, a contradiction. This finishes the argument that $|\overline{a}|=1$ is not possible, and completes the proof.
\end{proof}

It is now easy to see that $\mbox{Distance}(\mu,\nu)$ defines a true metric on the equivalence classes of conformal factors:

\begin{prop} The $\mbox{Distance}(\mu,\nu)$ defined in {\rm{(\ref{quot_dist})}} is a metric on the set of orbits
$\overline{\mu}:=\{m_*\mu\,;\,m \in \Md\,\}$ of conformal factors under the action of $\Md$.
\end{prop}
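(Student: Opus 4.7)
The plan is to combine the three lemmas already established to obtain all the metric axioms, with the identity of indiscernibles being the only item not immediately handed to us.

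First, I would observe that the semi-metric half of the statement is essentially free. The first lemma of the Appendix shows that whenever a group $G$ acts by isometries on a metric space $(X,d)$, the induced ``quotient distance'' $\widetilde{d}$ on the space of orbits is automatically a semi-metric (non-negative, symmetric, and satisfying the triangle inequality). The second lemma verifies exactly the hypothesis needed in our setting: the Kantorovich distance $\mbox{d}(\mu,\nu)$ associated to the hyperbolic metric $d^{\mbox{\tiny{H}}}$ on $\D$ is invariant under the push-forward action of $\Md$. Thus, applying the first lemma with $X$ the space of conformal factors, $d=\mbox{d}$, and $G=\Md$ acting via $T_m\mu=m_*\mu$, we immediately obtain that $\mbox{Distance}(\mu,\nu)$ is non-negative, symmetric, and satisfies the triangle inequality on the orbit space.

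The only thing left to verify is therefore the identity of indiscernibles: if $\mbox{Distance}(\mu,\nu)=0$, then $\overline{\mu}=\overline{\nu}$, i.e.\ there exists $m \in \Md$ with $\nu=m_*\mu$. Here I would invoke the third lemma of the Appendix, which guarantees that the infimum defining $\mbox{Distance}(\mu,\nu)$ is actually attained by some $m \in \Md$, so that $\mbox{Distance}(\mu,\nu)=\mbox{d}(m_*\mu,\nu)$. Now the key observation is that $\mbox{d}(\cdot,\cdot)$ is the classical Kantorovich transport distance on probability measures on $\D$ with cost equal to the hyperbolic distance $d^{\mbox{\tiny{H}}}$, which is itself a genuine metric on $\D$. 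By the standard result for Kantorovich--Wasserstein distances (see e.g.\ \cite{Villani:2003}), this implies that $\mbox{d}$ is a true metric on the space of probability measures, and in particular $\mbox{d}(m_*\mu,\nu)=0$ forces $m_*\mu=\nu$ as measures. Hence $\mu$ and $\nu$ lie in the same $\Md$-orbit, which is precisely what we need.

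The main obstacle in this argument is conceptual rather than technical: it is the existence of the minimizer $m$ supplied by the third lemma. Without it, vanishing of the infimum over $m \in \Md$ would only provide a sequence $m_n \in \Md$ with $\mbox{d}((m_n)_*\mu,\nu)\to 0$, from which it would not be obvious how to extract a single M\"{o}bius transformation realizing $\nu=m_*\mu$, because $\Md$ is not compact and the limiting ``$m$'' could escape to the boundary. Since that compactness-type issue has already been settled in the preceding lemma (where the escape of $a_n$ to $|a|=1$ was ruled out by the infinite-distance estimate), here I would simply cite it and conclude.
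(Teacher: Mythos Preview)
Your proposal is correct and follows the same strategy as the paper: use Lemmas A.1--A.2 for the semi-metric structure, Lemma A.3 to obtain a minimizing $m \in \Md$, and then the identity of indiscernibles for the underlying Kantorovich distance. The only difference is that you cite \cite{Villani:2003} for this last step, whereas the paper spells it out directly by extracting a weak$*$-limit $\overline{\pi}$ of near-optimal plans $\pi_k \in \Pi(\mu,\nu)$, showing $\overline{\pi}$ is supported on the graph $\{(z,m(z))\}$, and reading off $\nu=m^*\mu$ from the marginal conditions.
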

\begin{proof}
In view of the first lemma, we need to prove only that $\mbox{Distance}(\mu,\nu)=0$ implies that there exists a M\"{o}bius transformation $m \in \Md$ such that $\nu= m_*\mu$. By the second lemma, we know that
$\mbox{Distance}(\mu,\nu)=\rm{d}(m_*\mu,\nu)$ for some $m \in \Md$. For this
$m$ there exist thus $\pi_k \in \Pi(\mu,\nu)$ such that $\int_{\D\times \D} \, d^{\mbox{\tiny{H}}}(m(z),w)\,d\pi_k(z,w)$
tends to 0 as $k$ tends to $\infty$. By passing to a subsequence if necessary, we can, using the weak$*$-compactness of the set of probability measures on $\D\times\D$, assume that $\pi_k \rightarrow \overline{\pi}$ as $k$ tends to infinity, 
in the weak$*$-topology, where $\overline{\pi}$ is a measure of weight at most 1. 
Since the $\pi_k$ all have marginals $\mu$ and $\nu$, respectively, it follows that $\overline{\pi}$ must have these marginals as well, which guarantees that $\overline{\pi}$ is itself a probability measure and an element of $\Pi(\mu,\nu)$. We have moreover
\[
0 \,=\, \mbox{Distance}(\mu,\nu)\,=\,\int_{\D\times \D} \, d^{\mbox{\tiny{H}}}(m(z),w)\,d\overline{\pi}(z,w)~,
\]
implying that the support of $\overline{\pi}$ is contained in the subset 
$\{\,(z, m(z))\,;\, z \in \D\,\} \subset \D\times\D$. It then follows that, for any Borel set $A \subset \D$,
\[
\int_{A}\,\mu(z)\,d\vol_H(z)\,=\,\overline{\pi}(A\times \D)\,=\,\overline{\pi}(A\times m(A))\,=\,\overline{\pi}(\D\times m(A))\,=\,\int_{A}\,\nu(m(z))\,d\vol_H(z)~.
\]
This is possible for the continuous functions $\mu$ and $\nu$ only if $\nu=m^*\mu$, or equivalently, $\mu=m_*\nu$

\end{proof}

Next we prove the
list of properties of the distance function $d^R_{\mu,\nu}(z,w)$
given in Theorem  \ref{thm:properties_of_d}:

{\bf Theorem} \ref{thm:properties_of_d}

{\em The distance function $d^R_{\mu,\nu}(z,w)$ satisfies the
following properties}
\begin{table}[ht]
\begin{tabular}{c l l}
{\rm (1)} & $~d^R_{m^*_1\mu,m^*_2\nu}(m^{-1}_1(\z),m^{-1}_2(\w)) = d^R_{\mu,\nu}(\z,\w)~$ & {\rm Invariance under (well-defined)}\\
& & {\rm M\"{o}bius changes of coordinates} \\
&&\\
{\rm (2)} & $~d^R_{\mu,\nu}(\z,\w) = d^R_{\nu,\mu}(\w,\z)~$ & {\rm Symmetry} \\
&&\\
{\rm (3)} & $~d^R_{\mu,\nu}(\z,\w) \geq 0~$ & {\rm Non-negativity} \\
&&\\
{\rm (4)} & \multicolumn{2}{c}{$\!\!\!\!\!\!~d^R_{\mu,\nu}(\z,\w) = 0 \,\Longrightarrow \, \Omega_{z_0,R}$ {\rm in} $(\D,\mu)$ {\rm and} $\Omega_{w_0,R}$ {\rm in} $(\D,\nu)$ {\rm are isometric} }\\
&&\\
{\rm (5)} & $~d^R_{m^*\nu, \nu}(m^{-1}(\z),\z)=0~$ & {\rm Reflexivity} \\
&&\\
{\rm (6)} & $~d^R_{\mu_1,\mu_3}(z_1,z_3) \leq
d^R_{\mu_1,\mu_2}(z_1,z_2) + d^R_{\mu_2,\mu_3}(z_2,z_3)~$ & {\rm
Triangle inequality}
\end{tabular}
\end{table}

\begin{proof}
%+++++++++++++++++

For (1), denote $m_1^{-1}(z_0)=z_1$, and $m_2^{-1}(w_0)=w_1$. Then
\begin{align*}
d^R_{m_1^* \mu, m_2^* \nu}(z_1,w_1) &=
\mathop{\inf}_{m(z_1)=w_1} \int_{\Omega_{z_1,R}}|m_1^*\mu(z)-m^*m_2^*\nu(z)|d\vol_H(z) \\
&= \mathop{\inf}_{m(z_1)=w_1}
\int_{\Omega_{z_1,R}}|\mu(m_1(z))-\nu(m_2(m(z)))|d\vol_H(z).
\end{align*}
Next set $\widetilde{m} = m_2 \circ m \circ m_1^{-1}$. Note that
$\widetilde{m}(z_0)=w_0$. Plugging $m_2(m(z))=\widetilde{m}(m_1(z))$
into the integral and carrying out the change of variables
$m_1(z)=z'\,$, we obtain
$$
\mathop{\inf}_{m(z_1)=w_1} \int_{\Omega_{z_1,R}}
|\,\mu(z')-\nu(\widetilde{m}(z'))\,|d\vol_H(z') =
\mathop{\inf}_{\widetilde{m}(z_0)=w_0}
\int_{\Omega_{z_0,R}}|\,\mu(z')-\nu(\widetilde{m}(z'))\,|\,d\vol_H(z').
$$

%+++++++++++++++++

For (2), we use Lemma \ref{lem:symmetry_of_d_integral} and equations
(\ref{e:pullback_of_metric_density_mu_by_mobius}),
(\ref{e:push_forward_of_metric_density}) to write
\begin{align*}
d^R_{\mu,\nu}(z_0,w_0)&=
\mathop{\inf}_{m(z_0)=w_0} \int_{\Omega_\z,R} |\mu(z)-m^*\nu(z)|d\vol_H(z) \\
&= \mathop{\inf}_{m(z_0)=w_0} \int_{\Omega_\w,R}
|(m^{-1})^*\mu(w)-\nu(w)|d\vol_H(w) = d^R_{\nu,\mu}(w_0,z_0).
\end{align*}

%+++++++++++++++++

(3) and (4) are immediate from the definition of $d^R_{\mu,\nu}$.

%+++++++++++++++++

(5) follows from the observation that the minimizing $m$ (in the
definition (\ref{e:d_mu,nu(z,w)_def}) of $d^R_{\mu,\nu}$) is $m_1$
itself, for which the integrand, and thus the whole integral
vanishes identically.

%+++++++++++++++++

For (6), let $m_1$ be a M\"{o}bius transformation such that
$m_1(z_1)=z_2$, and $m_2$ such that $m_2(z_2)=z_3$. Setting $m=m_2
\circ m_1$, we have
\begin{align}
d^R_{\mu_1,\mu_3}(z_1,z_3) &\leq \int_{\Omega_{z_1,R}}
|\,\mu_1(z) - m^* \mu_3(z)\,|\,d\vol_H(z) \nonumber\\
&\leq \int_{\Omega_{z_1,R}}|\,\mu_1(z) - m_1^*
\mu_2(z)\,|\,d\vol_H(z) + \int_{\Omega_{z_1,R}}|\,m_1^* \mu_2(z) -
m^* \mu_3(z)\,|\,d\vol_H(z)\,. \label{intermed}
\end{align}

The second term in (\ref{intermed}) can be rewritten as (using
Lemma \ref{lem:symmetry_of_d_integral}, the change of coordinates
$m_1(z_1)=z_2$ and the observation $m^*=m_1^* m_2^*$)
\begin{align*}
\int_{\Omega_{z_1,R}}|\,m_1^* \mu_2(z) - m^* \mu_3(z)\,|\,d\vol_H(z)
&=\int_{\Omega_{z_2,R}} |\,m_{1*}m_1^*\mu_2(w) - m_{1*}m_1^* m_2^*
\mu_3(w)\,|\,
d\vol_H(w)\\
&=\int_{\Omega_{z_2,R}} |\,\mu_2(w) - m_2^* \mu_3(w)\,|\,d\vol_H(w).
\end{align*}
We have thus
\[
d^R_{\mu_1,\mu_3}(z_1,z_3) \leq \int_{\Omega_{z_1,R}}|\,\mu_1(z) -
m_1^* \mu_2(z)\,|\,d\vol_H(z) + \int_{\Omega_{z_2,R}} |\,\mu_2(w) -
m_2^* \mu_3(w)\,|\,d\vol_H(w)~,
\]
and this for any $m_1,\,m_2 \in \Md$ such that $m_1(z_1)=z_2$ and
$m_2(z_2)=z_3$. Minimizing over $m_1$ and $m_2$ then leads to the
desired result.

\end{proof}

% -------------------------------------
Next we prove the continuity properties of the function
$\Phi(z_0,w_0,\sigma) =
\int_{\Omega(z_0,R)}\,|\,\mu(z)-\nu(m_{z_0,w_0,\sigma}(z))\,|\,d\vol_H(z)$,
stated in Lemma \ref{lem:auxiliary}, which were used to prove
continuity of $d^R_{\mu,\nu}$ itself (in Theorem
\ref{thm:continuity_of_d^R_mu,nu}).

{\bf Lemma \ref{lem:auxiliary}} \\
{\em $\bullet$ For each fixed $(z_0,w_0)$ the function $\Phi(z_0,w_0,\cdot)$ is continuous on $S_1$.\\
$\bullet$ For each fixed $\sigma \in S_1$, $
\Phi(\cdot,\cdot,\sigma) $ is continuous on $\D \times \D$.
Moreover, the family $ \Big(\Phi(\cdot,\cdot,\sigma)\Big)_{\sigma
\in S_1} $ is equicontinuous.}

\begin{proof}
We start with the continuity in $\sigma$. We have
\[
\left|\,\Phi(z_0,w_0,\sigma)-\Phi(z_0,w_0,\sigma')\,\right| \leq
\int_{\Omega(z_0,R)}\,|\nu(m_{z_0,w_0,\sigma}(z))-\nu(m_{z_0,w_0,\sigma'}(z))\,|
\,d\vol_H(z)~.
\]
Because $\nu$ is continuous on $\D$, its restriction to the compact
set $\overline{\Omega(w_0,R)}$ (the closure of $\Omega(w_0,R)$) is
bounded. Since the hyperbolic volume of $\Omega(z_0,R)$ is finite,
the integrand is dominated, uniformly in $\sigma'$, by an integrable
function.  Since $m_{z_0,w_0,\sigma}(z)$ is obviously continuous in
$\sigma$, we can use the dominated convergence theorem to conclude.

Since $S^1$ is compact, this continuity implies that the infimum in
the definition of $d^R_{\mu,\nu}$ can be replaced by a minimum:
\[
d^R_{\mu,\nu}(z_0,w_0)=\mathop{\min}_{m(z_0)=w_0}\,
\int_{\Omega(z_0,R)}\,|\,\mu(z)-\nu(m(z))\,|\,d\vol_H(z)~.
\]

Next we prove continuity in $z_0$ and $w_0$ (with estimates that are
uniform in $\sigma$).

Consider two pairs of points, $(z_0,w_0)$ and $(z'_0,w'_0) \in \D
\times \D$. Then
\begin{align}
&|\,\Phi(z_0,w_0,\sigma)-\Phi(z'_0,w'_0,\sigma)\,|\nonumber\\
&~~~~~=\,\left|\,
\int_{\Omega(z_0,R)}\,|\,\mu(z)-\nu(m_{z_0,w_0,\sigma}(z))\,|\,d\vol_H(z)
- \int_{\Omega(z'_0,R)}\,|\,\mu(u)-\nu(m_{z'_0,w'_0 \sigma}(u)\,|\,d\vol_H(u)\,\right|\nonumber\\
&~~~~~\leq\,
\left|\,\int_{\Omega(z_0,R)}\,|\,\mu(z)-\nu(m_{z_0,w_0,\sigma}(z))\,|\,d\vol_H(z)
-
\int_{\Omega(z_0,R)}\,|\,\mu(m_{z_0,z'_0,1}z)-\nu(m_{z'_0,w'_0,\sigma}
\circ
 m_{z_0,z'_0,1}(z))\,|\,d\vol_H(u)\,\right|\nonumber\\
&~~~~~ \leq \int_{\Omega(z_0,R)}\,
\left(\,|\,\mu(z)-\mu(m_{z_0,z'_0,1}(z))\,| +
|\,\nu(m_{z_0,w_0,\sigma}(z))-\nu(m_{z'_0,w'_0,\sigma}(m_{z_0,z'_0,1}(z)))\,|\,\right)\,
d\vol_H(z) ~.\nonumber \label{interm2}
\end{align}
On the other hand, note that for any $\gamma >0$, $\mu$ and $\nu$
are continuous on the closures of $\Omega(z_0,R+\gamma)$ and
$\Omega(w_0,R+\gamma)$, respectively; since these closed hyperbolic
disks are compact, $\mu$ and $\nu$ are  bounded on these sets. Pick
now $\rho>0$ such that $|z'_0-z_0|<\rho$, $|w'_0-w_0|<\rho$ imply
that $\Omega(z'_0,R)\subset \Omega(z_0,R+\gamma)$ as well as
$\Omega(w'_0,R)\subset \Omega(w_0,R+\gamma)$. It follows that, if
$|z'_0-z_0|<\rho$ and $|w'_0-w_0|<\rho$, then
$|\,\mu(z)-\mu(m_{z_0,z'_0,1}(z)\,|$ and
$|\,\nu(m_{\z_0,w_0,\sigma}(z))-
\nu(m_{z'_0,w'_0,\sigma}(m_{z_0,z'_0,1}(z))) \,|$ are bounded
uniformly for $z \in \Omega(z_0,R)$. Since it is clear from the
explicit expressions (\ref{e:a_of_mobius}) that
$m_{z_0,z'_0,1}(z)\rightarrow z$ and
$m_{z'_0,w'_0,\sigma}(m_{z_0,z'_0,1}(z)) \rightarrow
m_{z_0,w_0,\sigma}(z)$ as $z'_0 \rightarrow z_0$ and $w'_0
\rightarrow w_0$, we can thus invoke the dominated convergence
theorem again to prove continuity of $\Phi(\cdot,\cdot,\sigma)$.

To prove the equicontinuity, we first note that $\nu$ is uniformly
continuous on $\Omega(w_0,R)\cup\Omega(w'_0,R) $, since $\nu$ is
continuous on the compact set $\overline{\Omega(w_0,R+\gamma)}$,
which contains $\Omega(w_0,R)\cup\Omega(w'_0,R)$ for all $w'_0$ that
satisfy $|w'_0-w_0| \leq \rho$. This means that, given any $\eps
>0$, we can find $\delta >0$ such that $|\nu(w)-\nu(w')|\le \eps$
holds for all $w,\,w'$ that satisfy $w,\,w' \in
\Omega(w_0,R)\cup\Omega(w'_0,R)$ and $|w-w'|\leq \delta$. This
implies the desired equicontinuity if we can show that
$|m_{z_0,w_0,\sigma}(z)-m_{z'_0,w'_0,\sigma}(m_{z_0,z'_0,1}(z))|$
can be made smaller than $\delta$, uniformly in $\sigma \in S_1$, by
making
$|z'_0-z_0|+|w'_0-w_0|$ sufficiently small.\\
We first estimate $|m_{z_0,w_0,\sigma}(z)-m_{z_0,w'_0,\sigma}(z)|$.
With the notations of (\ref{e:a_of_mobius}), we have
\begin{align*}
a(z_0,w_0,\sigma)-a(z_0,w'_0,\sigma)&=
\frac{(z_0-w_0\overline{\sigma})(1-\overline{z_0}w'_0\overline{\sigma})-
(z_0-w'_0\overline{\sigma})(1-\overline{z_0}w_0\overline{\sigma})}
{(1-\overline{z_0}w_0\overline{\sigma})(1-\overline{z_0}w'_0\overline{\sigma})}\\
&=\frac{(w_0-w'_0)\overline{\sigma}(|z_0|^2-1)}
{(1-\overline{z_0}w_0\overline{\sigma})(1-\overline{z_0}w'_0\overline{\sigma})}~,
\end{align*}
so that
\[
| a(z_0,w_0,\sigma)-a(z_0,w'_0,\sigma)|\leq \frac{|w_0-w'_0|}
{(1-|z_0|\,|w_0|)[1-|z_0|(|w_0|+\xi)]}\leq \frac{\xi}
{(1-|z_0|\,|w_0|)[1-|z_0|(|w_0|+\xi)]}
\]
when $|w_0-w'_0|<\xi$. It thus suffices to choose $\xi$ so that
$\xi< \zeta(1-|z_0|\,|w_0|)[1-|z_0|(|w_0|+\xi)]$ to ensure that $|
a(z_0,w_0,\sigma)-a(z_0,w'_0,\sigma)|<\zeta$. For the phase factor
$\tau$ in (\ref{e:a_of_mobius}) we obtain
\begin{align*}
 \tau(z_0,w_0,\sigma)-\tau(z_0,w'_0,\sigma)&=
\sigma \,\frac
{(1-\overline{z_0}w'_0\overline{\sigma})(1-z_0\overline{w_0}\sigma)
-
(1-\overline{z_0}w_0\overline{\sigma})(1-z_0\overline{w'_0}\sigma)}
{(1-\overline{z_0}w_0\overline{\sigma})(1-\overline{z_0}w'_0\overline{\sigma})}\\
&=\sigma \, \frac
{(w_0-w'_0)\overline{z_0}\overline{\sigma}-(\overline{w_0}-\overline{w'_0})z_0\sigma
+|z_0|^2(\overline{w_0}w'_0-\overline{w'_0}w_0)}
{(1-\overline{z_0}w_0\overline{\sigma})(1-\overline{z_0}w'_0\overline{\sigma})}\\
&=\sigma \, \frac {(w_0-w'_0)\overline{z_0}\overline{\sigma}-
z_0(\overline{w_0}-\overline{w'_0})\sigma +
|z_0|^2[\overline{w_0}(w'_0-w_0)+w_0(\overline{w_0}-\overline{w'_0}])
}
{(1-\overline{z_0}w_0\overline{\sigma})(1-\overline{z_0}w'_0\overline{\sigma})}~;
\end{align*}
when $\abs{w_0-w'_0}<\xi$, this implies
\[
|\tau(z_0,w_0,\sigma)-\tau(z_0,w'_0,\sigma)| \leq
\frac{|z_0|\,|w_0|\,[2+|z_0|(2|w_0|+\xi)]}
{(1-|z_0|\,|w_0|)[1-|z_0|(|w_0|+\xi)]}\xi~,
\]
which can clearly be made smaller than any $\zeta >0$ by choosing
$\xi$ sufficiently small. All this implies that (use
(\ref{e:a_of_mobius}))
\begin{align*}
|m_{z_0,w_0,\sigma}(z)-m_{z_0,w'_0,\sigma}(z)|
&\leq|\tau(z_0,w_0,\sigma)-\tau(z_0,w'_0,\sigma)|\frac{1+|z|}{1-|z|}
\,+\,|a(z_0,w_0,\sigma)-a(z_0,w'_0,\sigma)|\frac{(1+|z|)^2}
{(1-|z|)^2}\\
&\leq \zeta \,\frac{2(1+|z|)}{(1-|z|)^2},
\end{align*}
which will be  smaller than $\delta/2$, uniformly in $\sigma$, if
$\zeta < \delta (1-|z|^2)/8$; this bound on $\zeta$ in turn
determines the bound to be imposed on the $\xi$ used above. Hence
$|m_{z_0,w_0,\sigma}(z)-m_{z_0,w'_0,\sigma}(z)|<\delta/2$ can be
guaranteed, uniformly in $\sigma$, by choosing $|w_0-w'_0|<\xi$
for sufficiently small $\xi$. \\
One can estimate likewise
\[
|m_{z_0,w'_0,\sigma}(z)-m_{z'_0,w'_0,\sigma}(m_{z'_0,z_0,1}(z))|~,
\]
and show that this too can be made smaller than $\delta/2$,
uniformly in $\sigma$, by imposing sufficiently tight bounds on
$|z'_0-z_0|$ and $|w'_0-w_0|$. Combining all these estimates then
leads to the desired equicontinuity, as indicated earlier.
\end{proof}

To prove Lemma \ref{lem:extension_of_d}, we shall use the following
lemma:\\
\begin{lem}\label{lem:extermal_mobius}
Consider $u_k=e^{\bfi\psi} + \eps_k$, where $\abs{\eps_k}\too 0$ as
$k\too \infty$. Then there exists, for every $\eps>0$, a $K \in
\mathds{N}$ such that for all $k>K$; and all $\wh{m} \in
M_{D,0,u_k}$,
$$\inf_{w\in \Omega_{0,R}}\abs{\wh{m}(w)} > 1-\eps.$$
\end{lem}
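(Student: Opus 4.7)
The plan is to reduce this uniform statement on $\wh{m}(w)$ to a statement about the image set $\wh{m}(\Omega_{0,R})$, and then to use the explicit form of disk M\"obius transformations to show that this image is squeezed against the unit circle as $u_k \to e^{\bfi\psi}$.

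First, I would invoke Lemma \ref{lem:m(omega_z)=omega_w}: for any $\wh{m} \in M_{D,0,u_k}$, the image $\wh{m}(\Omega_{0,R})$ equals $\Omega_{u_k,R}$, the hyperbolic disk of radius $R$ centered at $u_k$. In particular this image is the \emph{same} set for every $\wh{m} \in M_{D,0,u_k}$, so proving a uniform lower bound on $|\wh{m}(w)|$ over $w \in \Omega_{0,R}$ reduces to proving a uniform lower bound on $|v|$ for $v \in \Omega_{u_k,R}$.

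Next, I would give an explicit description of $\Omega_{u_k,R}$. Let $m_k(z) = (z-u_k)/(1-\overline{u_k}z)$, so $m_k(u_k)=0$ and $m_k^{-1}(\zeta) = (\zeta+u_k)/(1+\overline{u_k}\zeta)$. Since $\Omega_{u_k,R} = m_k^{-1}(\Omega_{0,R}) = m_k^{-1}(\{|\zeta| \leq r_R\})$, where $r_R = \tanh(R) < 1$, every $v \in \Omega_{u_k,R}$ is of the form $v = m_k^{-1}(\zeta)$ with $|\zeta| \leq r_R$. The key identity is
\[
1 - |m_k^{-1}(\zeta)|^2 \;=\; \frac{(1-|\zeta|^2)(1-|u_k|^2)}{|1+\overline{u_k}\zeta|^2}\,.
\]
Using $|1+\overline{u_k}\zeta| \geq 1 - |u_k|\,|\zeta| \geq 1 - r_R$ and $1 - |\zeta|^2 \leq 1$, we obtain
\[
1 - |v|^2 \;\leq\; \frac{1-|u_k|^2}{(1-r_R)^2}\,,
\]
uniformly in $\zeta$ (hence uniformly in $v \in \Omega_{u_k,R}$).

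Finally, I would conclude by observing that since $u_k = e^{\bfi \psi} + \eps_k$ with $|\eps_k| \to 0$, we have $|u_k| \to 1$ and thus $1-|u_k|^2 \to 0$. Given $\eps > 0$, choose $K$ large enough that for all $k > K$,
\[
\frac{1-|u_k|^2}{(1-r_R)^2} \;<\; \eps(2-\eps)\,;
\]
then $1 - |v|^2 < \eps(2-\eps) = 1 - (1-\eps)^2$ for every $v \in \Omega_{u_k,R}$, i.e.\ $|v| > 1 - \eps$. Since $\wh{m}(\Omega_{0,R}) = \Omega_{u_k,R}$ regardless of the choice of $\wh{m} \in M_{D,0,u_k}$, this gives $\inf_{w \in \Omega_{0,R}} |\wh{m}(w)| > 1 - \eps$ for all such $\wh{m}$ and all $k > K$. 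The only mildly subtle step is the first one (using that the image is $\wh{m}$-independent); after that the argument is a short computation with the explicit expression for the disk-preserving M\"obius transformations.
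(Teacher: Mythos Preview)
Your proof is correct, and it takes a somewhat different route from the paper's. The paper works directly with the explicit formula for an arbitrary $\wh{m}\in M_{D,0,u_k}$, writes $\wh{m}(w)=e^{\bfi\theta}\frac{w+u_ke^{-\bfi\theta}}{1+\overline{u_k}e^{\bfi\theta}w}$, substitutes $u_k=e^{\bfi\psi}+\eps_k$, and estimates $|\wh{m}(w)-e^{\bfi\psi}|$ by an expression of the form $|\eps_k|\,\frac{1+r_R}{1-r_R(1+|\eps_k|)}$, which tends to zero uniformly in $w$ and $\theta$. Your argument instead front-loads the geometry: by Lemma~\ref{lem:m(omega_z)=omega_w}, every $\wh{m}\in M_{D,0,u_k}$ sends $\Omega_{0,R}$ onto the \emph{same} set $\Omega_{u_k,R}$, so the dependence on $\wh{m}$ disappears at the outset and the question reduces to bounding $|v|$ on a single hyperbolic disk. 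You then use the standard identity $1-|m_k^{-1}(\zeta)|^2=(1-|\zeta|^2)(1-|u_k|^2)/|1+\overline{u_k}\zeta|^2$ to push that disk against $\partial\D$ as $|u_k|\to 1$. The paper's computation yields slightly more (convergence of $\wh{m}(w)$ to the specific boundary point $e^{\bfi\psi}$, not just $|\wh{m}(w)|\to 1$), but that extra information is not used downstream; your approach is cleaner for exactly the statement being proved and makes the uniformity in $\wh{m}$ transparent rather than something to be checked in the estimates.
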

The set $M_{D,0,u_k}$ used in this lemma is given by Definition
\ref{def:M_D,z_0,w_0}.
\begin{proof}
From Lemma \ref{lem:a_and_tet_formula_in_mobius_interpolation} we
can write $\wh{m}$ as $$\wh{m}(w)=e^{\bfi \theta}\frac{w+u_k
e^{-\bfi \theta}}{1 + \bbar{u_k}e^{\bfi \theta}w},$$ for some
$\theta \in [0,2\pi)$. Substituting $u_k = e^{\bfi \psi} + \eps_k$
in this equation we get
\begin{align*}
\wh{m}(w) & = e^{\bfi \theta}\frac{w+(e^{\bfi \psi} + \eps_k)
e^{-\bfi \theta}}{1 + \bbar{(e^{\bfi \psi} + \eps_k)}e^{\bfi
\theta}w}
 = e^{\bfi \psi} \frac{1+w e^{\bfi (\theta - \psi)} + \eps_k \eps^{-\bfi \psi}}{1 + w e^{\bfi (\theta - \psi)} + \bbar{\eps_k}e^{\bfi\theta}w }.
\end{align*}
Writing the shorthand $s$ for $s = 1+w e^{\bfi(\theta-\psi)}$, we
have thus
\begin{align*}
\abs{\wh{m}(w)-e^{\bfi \psi}} & = \abs{e^{\bfi \psi} \frac{s +
\eps_k \eps^{-\bfi \psi}}{s + \bbar{\eps_k}e^{\bfi\theta}w } -
e^{\bfi \psi}}
 \leq
\abs{ e^{\bfi \psi}\frac{\eps_k e^{-\bfi \psi} - \bbar{\eps}_k e^{\bfi \theta}w }{s + \bbar{\eps}_k e^{\bfi \theta }w }  } \\
& \leq \frac{\abs{ \eps_k e^{-\bfi \psi} - \bbar{\eps}_k e^{\bfi
\theta}w  }}{\abs{ s + \bbar{\eps}_k e^{\bfi \theta}w }}
 \leq
\frac{\abs{\eps_k}\parr{1+ |w|}}{|s|-|\eps_k||w|}
\end{align*}
Now for all $w\in \Omega_{0,R}$, $|w|<r_R=\tanh^{-1}(R)$. This
implies $|s| \geq 1 - |w| \geq 1-r_R$, and $1+|w| \leq 1+r_R$, so
that
$$\abs{\wh{m}(w)-e^{\bfi \psi}} \leq |\eps_k|\frac{1+r_R}{1-r_R - |\eps_k|r_R} = |\eps_k|\frac{1+r_R}{1-r_R\parr{1+ |\eps_k|}}.$$
Since $\abs{\eps_k}\too 0$ the lemma follows.
\end{proof}

%%%%%%%%%%%%%%%%%%%%%%%%%%%%%%%%%%%%%%%%%%%%%%%

We are now ready for \\
{\bf Lemma
\ref{lem:extension_of_d}} {\em Let $\set{(z_k,w_k)}_{k\geq 1} \subset
\D\times\D$ be a sequence that converges, in the Euclidean norm, to
some point $(z',w') \in \bbar{\D}\times\bbar{\D} \setminus
\D\times\D$, that is $|z_k-z'|+|w_k-w'| \too 0$, as $k \too \infty$.
Then, $\lim_{k\too \infty}d^R_{\xi,\zeta}(z_k,w_k)$ exists and
depends only on the limit point $(z',w')$.}
\begin{proof}
Since $(z',w') \in \bbar{\D}\times\bbar{\D} \setminus \D\times\D$
either $z'\in \bbar{\D}\setminus \D$ or $w'\in \bbar{\D}\setminus
\D$. Let us assume that $z'\in \bbar{\D}\setminus \D$ (the case
$w'\in \bbar{\D}\setminus \D$ is similar). Denote by $m_k$ an
arbitrary M\"{o}bius transformation in $M_{D,0,w_k}$. By symmetry of
the distance and using a change of variables we then obtain
\begin{align*}
d^R_{\xi,\zeta}(z_k,w_k) & = d^R_{\zeta,\xi}(w_k,z_k) \\
& = \min_{m(w_k)=z_k} \int_{\Omega_{w_k,R}} \Big |
\zeta(w)-\xi(m(w)) \Big |d\vol_H(w) \\ & = \min_{m(w_k)=z_k}
\int_{\Omega_{0,R}}\Big | \zeta(m_k(w))-\xi(m(m_k(w))) \Big
|d\vol_H(w).
\end{align*}
Now, recall that $\xi(z) = \xi^H(z) = \wt{\xi}(z) (1-|z|^2)^2$,
where $\wt{\xi}(z)$ is a bounded function, $\sup_{z\in
\D}|\wt{\xi}(z)| \leq C_{\wt{\xi}}$. From Lemma
\ref{lem:extermal_mobius} we know that for every $\eps>0$ and for
$k>K$ sufficiently large, $\abs{m(m_k((w))} > 1 -\eps$ for all $w\in
\Omega_{0,R}$, and all $m$ such that $m(w_k)=z_k$. This means that
for these $k>K$ we have
\begin{align*}
\abs{\xi(m(m_k(w)))} & =
\abs{\wt{\xi}(m(m_k(w)))}(1-\abs{m(m_k(w))}^2)^2 \\ \ & \leq
C_{\wt{\xi}}(1-(1-\eps)^2)^2\leq C_{\wt{\xi}}\eps^2(2-\eps)^2,
\end{align*}
for all $w \in \Omega_{0,R}.$
Therefore,
\begin{align*}
& \abs{d^R_{\xi,\zeta}(z_k,w_k) -
\int_{\Omega_{0,R}}\babs{\zeta(m_k(w))}d\vol_H(w)} \\ & \leq \abs{
\min_{m(w_k)=z_k}
\int_{\Omega_{0,R}}\babs{\zeta(m_k(w))-\xi(m(m_k(w)))}d\vol_H(w) -
\int_{\Omega_{0,R}}\babs{\zeta(m_k(w))}d\vol_H(w) } \\ &
 \leq
\abs{ \min_{m(w_k)=z_k} \int_{\Omega_{0,R}} \set{
\babs{\zeta(m_k(w))-\xi(m(m_k(w)))} - \babs{\zeta(m_k(w))} }
d\vol_H(w)} \\ & \leq \min_{m(w_k)=z_k} \int_{\Omega_{0,R}}
\babs{\xi(m(m_k(w)))} d\vol_H(w) \too 0, \ \mathrm{as} \ k\too
\infty.
\end{align*}
Therefore $d^R_{\xi,\zeta}(z_k,w_k)$ converges, as $k\too \infty$,
if and only if $\int_{\Omega_{0,R}}\abs{\zeta(m_k(w))}d\vol_H(w)$
converges, and to the same limit, for any $m_k \in M_{D,0,w_k}$. We
can take, for instance, $m_k(w) = \frac{w+w_k}{1+\bbar{w_k}w}$ which
gives
$$\int_{\Omega_{0,R}}\abs{\zeta(m_k(w))}d\vol_H(w)  =
\int_{\Omega_{0,R}}\abs{\zeta\parr{\frac{w+w_k}{1+\bbar{w_k}w}}}d\vol_H(w).$$
For $w\in \Omega_{0,R}$, $\abs{1+\bbar{w_k}w} > 1 - r_R$. It follows
that this expression has a limit as $k \too \infty$, and
$$\lim_{k\too \infty} \int_{\Omega_{0,R}}\abs{\zeta(m_k(w))}d\vol_H(w) =
\int_{\Omega_{0,R}}\abs{\zeta\parr{
\frac{w+w'}{1+\bbar{w'}w}}}d\vol_H(w),$$ which clearly depends on
$w'$, not on the sequence $\set{w_k}$.
\end{proof}

\end{document}